\documentclass[a4paper, 12pt]{amsart}
\usepackage{amssymb,amsbsy,amsmath,amsfonts,amssymb,amscd}
\usepackage{latexsym}
\usepackage{graphics}
\usepackage{color}
\usepackage{marginnote}
\renewcommand{\Sigma}{\Gamma}

\newcommand\Oh{{\mathcal O}}

\newcommand\sH{{\mathcal H}}
\newcommand\sF{{\mathcal F}}
\newcommand\sG{{\mathcal G}}
\newcommand\sI{{\mathcal I}}

\newcommand\sL{{\mathcal L}}
\newcommand\sB{{\mathcal B}}
\newcommand\sQ{{\mathcal Q}}
\newcommand\sN{{\mathcal N}}

\newcommand\sK{{\mathcal K}}
\newcommand\LL{{\mathbb L}}
                \newcommand\sM{{\mathcal M}}

\newcommand\om{\omega}
\newcommand\Om{\Omega}
\newcommand{\ot}{\otimes}
\newcommand{\op}{\oplus}
\newcommand\la{\lambda}

\newcommand\Ga{\Sigma}

\newcommand\de{\delta}

\DeclareMathOperator{\spec}{Spec}
\DeclareMathOperator{\sing}{Sing}
\DeclareMathOperator{\coker}{coker}

\newcommand{\CC}{\ensuremath{\mathbb{C}}}

\newcommand{\ZZ}{\ensuremath{\mathbb{Z}}}
\newcommand{\QQ}{\ensuremath{\mathbb{Q}}}

\newcommand{\MMM}{\ensuremath{\mathcal{M}}}

\newcommand{\hol}{\ensuremath{\mathcal{O}}}

\newcommand{\PP}{\ensuremath{\mathbb{P}}}

\newcommand{\ra}{\ensuremath{\rightarrow}}

\def\eea{\end{eqnarray*}}
\def\bea{\begin{eqnarray*}}

\newcommand\dual{\mathrel{\raise3pt\hbox{$\underline{\mathrm{\thinspace d
\thinspace}}$}}}

\newcommand\qe{\ifhmode\unskip\nobreak\fi\quad $\Box$}

\def\BOX{\hfill\lower.5\baselineskip\hbox{$\Box$}}

\newtheorem{theo}{Theorem}[section]
\newtheorem{remarkk}[theo]{Remark}
\newenvironment{rem}{\begin{remarkk}\rm}{\end{remarkk}}

\newtheorem{defin}[theo]{Definition}

\newtheorem{prop}[theo] {Proposition}
\newtheorem{cor}[theo]{Corollary}
\newtheorem{lemma}[theo]{Lemma}
\newtheorem{example}[theo]{Example}

\newcommand{\sA}{\ensuremath{\mathcal{A}}}
\newcommand{\sR}{\ensuremath{\mathcal{R}}}

\newcommand{\sV}{\ensuremath{\mathcal{V}}}

\newcommand{\bZ}{{\mathbb Z}}
\newcommand{\bC}{{\mathbb C}}

\newcommand{\ul}{\underline}

\renewcommand{\a}{\alpha}
\renewcommand{\b}{\beta}

\renewcommand{\d}{\delta}
\newcommand{\D}{\Delta}
\newcommand{\e}{\varepsilon}
\newcommand{\we}{\wedge}

\newcommand{\z}{\zeta}

\newcommand{\s}{\sigma}

\newcommand{\cutoff}[1]{}

%%%%%%%%%Fabrizio green%%%%%%%%%%%%%%%

%%%%%%%%Michael blue%%%%%%%%%%%%%%%

%%%%%%%%%Fabio red%%%%%%%%%%%%%%%

%%%%%%%%%%%%%%%%%%%%%%%%

\begin{document}

\title[Dihedral Galois covers]{Dihedral Galois  covers of algebraic varieties and 
the simple cases}
\author{Fabrizio Catanese, Fabio Perroni}

\thanks{
%\textit{2010 Mathematics Subject Classification}: \\
\noindent\textit{Keywords}: Galois covers of algebraic varieties; classification theory of algebraic varieties. \\
The present work took place in the framework of the DFG
Forschergruppe 790 `Classification of algebraic
surfaces and compact complex manifolds' and of  
the ERC-2013-Advanced Grant - 340258- TADMICAMT. The second author was also
supported by the grant FRA 2015 of the University of Trieste,
PRIN ``Geometria delle Variet\`{a} Algebriche" and  INDAM}

%\date{\today}

\maketitle

\begin{center}
{\it Dedicated to Ugo Bruzzo on the occasion of his 60-th birthday.}
\end{center}

\begin{abstract}
In this article we investigate the algebra and geometry of 
dihedral covers of smooth algebraic varieties.
To this aim we first describe the Weil divisors and the Picard group of divisorial sheaves
on  normal double covers. Then we provide a structure theorem for dihedral covers, 
that is, given a smooth variety $Y$, we describe the algebraic ``building data"  on $Y$
which are equivalent to the existence  of 
such covers $\pi \colon X \to Y$. We introduce then two special 
very explicit classes of dihedral covers:
the simple and the almost simple dihedral covers, and we determine their
basic invariants. For the simple dihedral covers we also determine  their natural deformations. 
In the last section we give an application to fundamental groups. 
\end{abstract}

\tableofcontents

%%%%%%%%%%%%%%%%%%%%%
%%%%%%%%%%%%%%%%%%%%%%

\section{Introduction}

A main issue in the classification theory of algebraic varieties is the construction of interesting and illuminating examples.
For instance in the book of Enriques \cite{enriques} one can see that a recurrent method is the one
of considering the minimal resolution $S$ of double covers $f : X \ra Y$;  these  are called `piani doppi', double planes,  when $Y = \PP^2$.

Burniat \cite{burniat} considered more general bidouble covers, i.e., Galois covers $f : X \ra Y$ with group $G = (\ZZ/2)^2$,
and some  work of the first author \cite{cat84}, \cite{singular} was focused on looking at the invariants and the deformations of bidouble covers,
deriving  basic results  for the moduli spaces of surfaces. 

Comessatti \cite{comessatti} was the first to study Galois coverings with abelian group $G$, and their relations to topology,
 while Pardini \cite{pardini}
described neatly the algebraic structure of such coverings, their invariants and deformations.

Just to give a flavour of the result: when $G$ is a cyclic group of order $n$ and $Y$ is factorial,  normal 
$G$-coverings correspond to 
 building data $(L, D_1, \dots , D_{n-1})$ consisting of reduced effective divisors 
 $D_1, \dots , D_{n-1}$
without common components, and of  the isomorphism class of a divisor $L$ such that $ n L \equiv \sum_i i D_i$
($\equiv$ is the  classical notation for linear equivalence).
The  theorem is the scheme counterpart of the field theoretic description 
$$  \CC(X) = \CC(Y) [z] / ( z^n - \Pi_i \de_i^{i}).$$
Here  $ D_i = \{ \de_i = 0\}$ and the branch locus $\sB_f$ is the union of the divisors $D_i$.

In Pardini's theorem  one can replace the field $\CC$ by any algebraically closed field of characteristic coprime to $n$, but over the complex numbers
we have  a more general result, namely, the extension of the Riemann existence theorem due to Grauert and Remmert:  normal schemes with a finite
covering $f : X \ra Y$, and with branch locus contained in a divisor $\sB$ correspond to conjugacy classes of monodromy 
homomorphisms $ \mu : \pi_1 (Y \setminus \sB) \ra \mathfrak S_n$. The scheme is a variety (i.e., irreducible) iff $\operatorname{Im} (\mu)$ is a transitive subgroup,
and Galois if it is simply transitive (similarly for Abelian coverings in \cite{BC08} the criterion for  irreducibility of $X$  was explicitly given).

On the other hand, the goal of finding explicit algebraic equations for the case of Galois coverings with non Abelian group $G$  is like the quest for the Holy Graal,
and in this paper, widely  extending  previous results of Tokunaga \cite{Tok}, we  give a full characterization of 
 Galois coverings  $\pi \colon X \ra Y$ with $Y$ smooth and with Galois 
group the dihedral group $D_n$ of order $2n$.

The underlying idea is of course the same and is easy to explain in general terms: we can factor $\pi \colon X \ra Y$ as the composition of  a cyclic covering 
of order $n$, $ p : X \ra Z : = X/H$,
(here $H \subset D_n$ is the group of rotations) and a (singular) double covering $q : Z \ra Y$.
A main new technical tool, developed in sections 3 and 4, consists in describing Weil divisors and the Picard group of divisorial sheaves
on a normal double cover $Z$. To make the results more appealing, we describe in detail the very special case of the Picard group
of hyperelliptic curves, especially the algebraic determination of torsion line bundles; this is an extension of a previous partial result by Mumford
{\cite{theta},  it boils down to determinantal equations for  triples of polynomials in one variable,  and bears interesting similarities with
the resultant of two polynomials.

In section 5 we describe the general theorem, whose  application in concrete cases is however  not so straightforward.  For this reason we concentrate in the next sections on two special 
very explicit classes of dihedral covers of algebraic varieties:
the simple and the almost simple dihedral covers. 

The underlying idea of simple covers is the one of giving a schematic equation which looks exactly as the equation describing   the field extension.
The first well known  instance is the one of a simple cyclic cover: $X$  is given by an equation
$$ z^n = F,$$ where $z$ is a fibre variable on the line bundle $\LL$ associated to a divisor $L$ on $Y$, and $ F $  is a section in  $H^0(\hol_Y ( n L))$.

In \cite{ENR} the analysis of everywhere nonreduced moduli spaces was based also on the technical notion of an almost simple cyclic cover,
given as the locus $X$  inside the $\PP^1$-bundle $\PP (\LL_0 \oplus \LL_{\infty})$, associated to a pair of two line bundles $\LL_0 , \LL_{\infty}$,
defined  by the equation
$$
z_0^n a_{\infty} =  z_{\infty}^n a_0, \ a_0 \in H^0 (\hol_Y ( n L_0)), \ a_{\infty} \in H^0 (\hol_Y ( n L_{\infty})),
$$
and where the divisors $E_0 = \{ a_0 = 0 \}$ and $E_{\infty} = \{ a_{\infty} = 0 \}$ are disjoint (else the covering is not finite).

Let us now explicitly describe the equations of a simple dihedral covering: $X$ is contained in a rank two split vector bundle of the form $\LL \oplus \LL$
over $Y$, and is defined by equations 
\begin{eqnarray*}
\begin{cases}
u^n + v^n &= 2a , \quad a \in   H^0 (\hol_Y ( n L)), \\
uv &= F  , \quad F \in   H^0 (\hol_Y ( 2 L)). 
\end{cases}
\end{eqnarray*}
The dihedral action is generated by an element $\s$ of order $n$ such that
$$  u \mapsto  \z u, v \mapsto \z^{-1} v, \ $$
where $\z$ is a primitive $n$-th root of $1$, and by an involution $\tau$ exchanging $ u $ with $v$.

The branch locus is here the divisor $\sB = \{ a^2 - F^n = 0\},$  $Z$ is defined by $z^2 = a^2 - F^n$, and the cyclic covering
$X$ is defined by $ u^n = z + a$: it is smooth if the two divisors $\{a=0\}$ and $\{ F = 0\}$ intersect transversally and $\sB$ is smooth outside of $ F=0$.

In particular, as it is well known from the work of Zariski \cite{Zar29}
and many other followers, the fundamental group $\pi_1 (Y \setminus \sB)$ is non Abelian and admits a surjection to the group $D_n$
(irreducibility is granted if there are points where  $\{a=0\}$ and $\{ F = 0\}$,   since there the covering is totally ramified).

In the case of simple dihedral coverings, the covering $ X \ra Z$ is only ramified in the $A_{n-1}$ singularities of $Z$, the points where $z=a=F = 0$.
In order to get a covering with more ramification we introduce then the almost simple dihedral covers, defined this time on the fibre product
of two $\PP^1$-bundles.
$X$ is the subset of  $ \PP ( \ul{\CC} \op \LL ) \times_Y \PP ( \ul{\CC} \op \LL  )$ 
defined by the following two equations:
\begin{eqnarray*}
\begin{cases}
u_1 v_1 - u_0 v_0 F & = 0 \\
a_\infty v_1^n u_0^n -2a_0 v_0^n u_0^n + a_\infty v_0^n u_1^n & = 0 \, ,
\end{cases}
\end{eqnarray*}
where, $v_1 , u_1$ are fibre coordinates on $\LL$,
$v_0, u_0$ are fibre coordinates on $\ul{\CC}$ (so that 
$([u_0 : u_1], [v_0 : v_1])$ are projective fibre coordinates on
$ \PP ( \ul{\CC} \op \LL ) \times_Y \PP ( \ul{\CC} \op \LL  )$ ).

As we said earlier, we want to find the invariants and study the deformations of the varieties that we construct in this way.
We first do this in full generality for any Galois covering in section 2; later we write more explicit formulae for simple and almost simple dihedral coverings.
We apply these in the case of easy examples, deferring the study of more complicated cases to the future.

One interesting application, to real forms of curves with cyclic symmetry, and related loci in  the moduli space of real curves,
  shall be given in a forthcoming  joint work with Michael L\"onne \cite{clp4}.

Finally, in this paper  we consider also the non Galois case: namely, coverings of degree $n$ whose Galois group is $D_n$.
In doing so, we establish connections
with the theory of triple coverings of algebraic varieties described by Miranda \cite{miranda} (if the triple covering is not Galois, then its Galois group is $D_3$!).

%More precise  information  on the structure of the paper can be drawn from the table of contents.

\section{Direct images of sheaves under Galois covers}
We consider (algebraic) varieties which are defined over the field of complex numbers $\CC$,
though many results remain valid for complex manifolds and for algebraic varieties defined over an algebraically 
closed field 
of characteristic $p$ not dividing $2n$.

\begin{defin}
Let $Y$ be a variety and let $G$ be a finite group. A {\bf Galois cover} of $Y$
with group $G$, shortly a {\bf $G$-cover} of  $Y$, is a finite morphism $\pi \colon X \ra Y$,
where $X$ is a variety 
 with an effective action by $G$, such that $\pi$ is $G$-invariant and induces an isomorphism
$X/G \cong Y$. 

A \textbf{dihedral cover} of $Y$ is a $D_n$-cover of $Y$, where $D_n$ is the dihedral group of order $2n$. 

If $\pi \colon X \ra Y$ is a $G$-cover, the \textbf{ramification locus} of $\pi$ is the locus $R: =R _\pi  \subset X$
of points with non-trivial stabilizer; the \textbf{(reduced) branch locus} of $\pi$ is 
$\sB_\pi:=\pi ( R ) \subset Y$.  

In the case where $X, Y$ are smooth, $R$ is the reduced divisor defined by the Jacobian determinant of $\pi$,
and also $\sB_\pi$ is a divisor (purity of the branch locus).
\end{defin} 

In the following, if there is no danger of confusion, we will denote $\sB_\pi$ by $\sB$.

Let $\pi \colon X \ra Y$ be a $G$-cover. 
Since $\pi \colon X \ra Y$ is finite, $\pi_*\Oh_X$ is a coherent sheaf of $\Oh_Y$-algebras.
The $\Oh_Y$-algebra structure on $\pi_*\Oh_X$ corresponds to a 
morphism $m\colon \pi_*\Oh_X \ot_{\Oh_Y} \pi_*\Oh_X \to \pi_*\Oh_X$
of $\Oh_Y$-modules that gives a commutative and associative product on $\pi_*\Oh_X$.
These data determine $X$ as $\spec \pi_* \Oh_X$. Furthermore, the $G$-action on $X$
corresponds to a $G$-action on $\pi_*\Oh_X$  which is the identity on $\hol_Y$.
The two structures on $\pi_*\Oh_X$, of an algebra over $\Oh_Y$ and of a $G$-module,
impose restrictions  that sometimes allow one to determine certain ``building data" for the $G$-cover $\pi$.
One of our aims here  is the determination of such building data when  $G=D_n$
(Theorem  \ref{STDn}).

We will restrict ourselves to the case where $\pi$ is flat, or equivalently  $\pi_*\Oh_X$
is locally free. We recall in the next proposition a useful characterization of
flat $G$-covers.

\begin{prop}\label{fcm}
Let $\pi \colon X \ra Y$ be a $G$-cover, with $X$ irreducible and 
$Y$ smooth. Then $\pi$ is flat $\Leftrightarrow$  $X$ is Cohen-Macaulay.
\end{prop}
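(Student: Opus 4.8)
The plan is to reduce the statement to the standard commutative-algebra fact that a finite morphism is flat over a regular base precisely when its fibres (equivalently, its total space) satisfy an appropriate Cohen–Macaulay condition. Since $\pi$ is finite, $\pi_*\Oh_X$ is a coherent $\Oh_Y$-module, and flatness of $\pi$ is by definition local freeness of $\pi_*\Oh_X$. The key observation is that over a regular local ring $A = \Oh_{Y,y}$, a finitely generated module $M = (\pi_*\Oh_X)_y$ is free if and only if it is a maximal Cohen–Macaulay $A$-module, i.e. $\operatorname{depth}_A M = \dim A = \dim Y$. This is the Auslander–Buchsbaum formula: $\operatorname{depth}_A M + \operatorname{pd}_A M = \operatorname{depth} A = \dim A$, so $M$ is free $\Leftrightarrow \operatorname{pd}_A M = 0 \Leftrightarrow \operatorname{depth}_A M = \dim A$, and since always $\operatorname{depth}_A M \le \dim A$ here (as $\operatorname{Supp} M = \operatorname{Spec} A$, because $\pi$ is surjective — $X$ irreducible dominating $Y$ via a finite hence closed map), the last condition is equivalent to $M$ being maximal Cohen–Macaulay.

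**Translating depth to the Cohen–Macaulay property of $X$.** The second ingredient is that, since $\pi$ is finite, depth is insensitive to the pushforward: for a point $x \in X$ over $y$, the local ring $\Oh_{X,x}$ is a module over $A = \Oh_{Y,y}$, finite as an $A$-module, and one has $\operatorname{depth}_A \Oh_{X,x} = \operatorname{depth}_{\Oh_{X,x}} \Oh_{X,x}$ (depth can be computed over $A$ or over the finite $A$-algebra, using that a maximal $A$-regular sequence on the module is the same as a maximal regular sequence in $\mathfrak m_{\Oh_{X,x}}$, because the maximal ideals of $\Oh_X \otimes_A \kappa(y)$ are exactly the points of the fibre). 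More precisely, $(\pi_*\Oh_X)_y = \prod_{x \in \pi^{-1}(y)} \Oh_{X,x}$, and $\operatorname{depth}_A$ of a finite product equals the minimum of the depths of the factors; moreover each $\Oh_{X,x}$ has Krull dimension $\dim A = \dim Y$ (finite surjective morphism of irreducible varieties preserves dimension, and $X$ being a variety is equidimensional). Hence $(\pi_*\Oh_X)_y$ is a maximal CM $A$-module $\Leftrightarrow$ every $\Oh_{X,x}$ with $x \in \pi^{-1}(y)$ is a Cohen–Macaulay local ring of dimension $\dim Y$, i.e. $X$ is Cohen–Macaulay at every point of $\pi^{-1}(y)$.

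**Assembling the equivalence.** Combining the two steps: $\pi$ is flat $\Leftrightarrow$ $\pi_*\Oh_X$ is locally free $\Leftrightarrow$ for every $y \in Y$, $(\pi_*\Oh_X)_y$ is $A_y$-free $\Leftrightarrow$ for every $y$, $(\pi_*\Oh_X)_y$ is maximal Cohen–Macaulay over the regular ring $A_y$ $\Leftrightarrow$ $\Oh_{X,x}$ is Cohen–Macaulay for every $x \in X$ $\Leftrightarrow$ $X$ is Cohen–Macaulay. Note that the $G$-action plays no role and the hypothesis that $X$ is irreducible is used only to guarantee surjectivity of $\pi$ and equidimensionality of $X$ (so that the local rings $\Oh_{X,x}$ all have dimension exactly $\dim Y$); smoothness of $Y$ is what makes $A$ regular, which is exactly what is needed for the Auslander–Buchsbaum argument. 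One may alternatively invoke the "miracle flatness" criterion (EGA IV, 6.1.5, or Matsumura): a morphism of locally noetherian schemes with regular target, Cohen–Macaulay source, and equidimensional fibres of the expected dimension is flat; here the fibres are $0$-dimensional, which is automatic for a finite morphism, so Cohen–Macaulayness of $X$ gives flatness directly, and the converse (flat + regular base $\Rightarrow$ source CM) is the easy implication since fibres of a finite flat map over a point are Artinian hence CM and $\Oh_Y$ is regular hence CM.

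**Main obstacle.** There is no serious obstacle: the entire content is the Auslander–Buchsbaum formula plus the behaviour of depth and dimension under a finite morphism. The only point requiring a little care is the passage $\operatorname{depth}_A (\pi_*\Oh_X)_y = \min_x \operatorname{depth}_{\Oh_{X,x}} \Oh_{X,x}$ and the claim $\dim \Oh_{X,x} = \dim Y$; both follow from standard properties of finite surjective morphisms between varieties (going-up, and the dimension formula for finite morphisms), so in the write-up I would simply cite Matsumura or EGA for "miracle flatness" and spend one or two lines on why the source being a variety (irreducible, hence equidimensional, and finite over $Y$) forces the fibre dimension hypothesis.
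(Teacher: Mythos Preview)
Your proposal is correct and takes essentially the same approach as the paper: the paper's proof cites Matsumura's Corollary to Theorem 23.1 (the ``miracle flatness'' criterion you mention) for the $(\Leftarrow)$ direction, and for $(\Rightarrow)$ observes directly that a regular sequence in $\mathfrak{m}_y$ pulls back under a flat map to a regular sequence in $\Oh_{X,x}$, which is exactly your depth argument in compressed form. Your write-up unpacks via Auslander--Buchsbaum what the paper handles by a one-line citation, but the underlying mathematics is identical.
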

\begin{proof}
($\Leftarrow$) This is a direct application of the Corollary to Theorem 23.1 in \cite{matsumura}.

($\Rightarrow$) Let $x\in X$ and $y = \pi ( x )$. Since $\pi$ is flat,
for any regular sequence $a_1, \ldots , a_r \in m_y$ for $\Oh_{Y,y}$,
$\pi^* (a_1), \ldots , \pi^* (a_r)$ is a regular sequence for $\Oh_{X,x}$.
Hence, since $Y$ is smooth and $\pi$ is finite, $X$ is  Cohen-Macaulay.
\end{proof}

In the rest  of this section we assume that  $X$  and $Y$ are smooth. 
The  aim here is to relate the basic sheaves of $X$, namely  $\Om_X^i , \Theta_X , \ldots$,
 with the corresponding ones of $Y$.  Concretely, for example for the  $\Om_X^i$'s,
there is a chain of inclusions as follows:
$$
\Om_Y^i \ot \pi_* \Oh_X \hookrightarrow \pi_* \Om_X^i \hookrightarrow  \Om_Y^i (\log \sB)^{**} \ot \pi_* \Oh_X \, ,
$$
where $\Om_Y^i (\log \sB)$ is the sheaf of $i$-forms with logarithmic poles along $\sB$
and $ \Om_Y^i (\log \sB)^{**}$ is its double dual. 
These three sheaves coincide on $Y\setminus \sB$ and it is possible to describe 
explicitly $ \pi_* \Om_X^i$ inside $\Om_Y^i (\log \sB)^{**} \ot \pi_* \Oh_X$ at the generic points $\sB_i$ of $\sB$.
Then, by Hartogs' theorem, one obtains a description of $ \pi_* \Om_X^i$.
We work out in detail the steps of this procedure for $\Om_X^1$ and $\Theta_X$.

Let $\CC ( X )$ and  $\CC ( Y )$ be the fields of rational 
functions  on $X$ and $Y$, respectively.  The space of differentials of $\CC ( X )$,
$\Om_{\CC ( X )}^1$, is a vector space of dimension 
$$
d=\dim_{\CC ( X )} \Om_{\CC ( X )}^1 = \dim X \, 
$$
and a basis  is given by the differentials of a transcendence basis of
the field extension  $\CC \subset \CC ( X )$.
The same holds true for $\Om_{\CC ( Y )}^1$ and, since $\dim X = \dim Y$,
the pull-back morphism $\pi^* \colon \CC ( Y ) \ra \CC ( X )$ induces an isomorphism
of $\CC ( X )$-vector spaces:
\begin{eqnarray}\label{star}
\Om_{\CC ( Y )}^1 \ot_{\CC ( Y )} \CC ( X ) \ra \Om_{\CC ( X )}^1 \, .
\end{eqnarray} 
In other words, the rational differentials of $X$ can be written as linear combinations 
$$
\sum_{i=1}^{d} f_i \operatorname{d}y_i \, , 
$$
where $f_1, \ldots , f_d \in \CC ( X )$,  $y_1 , \ldots , y_d \in \CC ( Y )$ form a   transcendence basis for
the extension $\CC \subset \CC ( Y )$. Since a regular differential 1-form on $X$ is a rational differential 
1-form which is regular at each point, the isomorphism \eqref{star} induces  an inclusion 
\begin{eqnarray}\label{starstar}
\varphi \colon \pi_*\Om^1_{X} \hookrightarrow \Om^1_{\CC ( Y )} \ot_{\CC ( Y )} \CC ( X ) \, .
\end{eqnarray}

Since $X$ and $Y$ are smooth, by the theorem of purity of the branch locus
\cite{Zar} the ramification locus $R$ and the branch locus $\sB$ of $\pi$
are divisors in $X$ and $Y$ respectively. 
Let $\sB=\sum_{i=1}^r \sB_i$, where $\sB_i \subset Y$ are prime divisors.
We recall, following \cite{CHKS06}, the following definition:
$$
\Om^1_Y (\log \sB) := {\rm Im} \left( \Om^1_Y \op \Oh_Y^{\op r} \to \Om_Y^1 (\sB) \right) \, ,
$$
where $ \Om^1_Y  \hookrightarrow \Om_Y^1 (\sB)$ is the natural inclusion, 
$\Oh_Y^{\op r} \to \Om_Y^1 (\sB)$ is given by
$$
e_i  \mapsto \frac{\operatorname{d}b_i}{b_i} \, , \quad i=1, \ldots , r \, ,
$$
where, for any $i$, $e_i$ is the $i$-th element of the standard 
$\Oh_Y$-basis of $\Oh_Y^{\op r}$, and $b_i$ is a global section of $\Oh_Y(\sB_i)$ with $\sB_i=\{b_i=0\}$.

Let $\Om_Y^1(\log \sB)^{**}$ be the double dual of $\Om_Y^1(\log \sB)$. It coincides with the 
sheaf of germs of logarithmic $1$-forms with poles along $\sB$ defined in \cite{saito}.
Indeed both sheaves are reflexive and they coincide on the complement $Y\setminus {\rm Sing}(\sB)$
of the singular locus of $\sB$.

\begin{prop}\label{pistaromega}
Let $\pi \colon X \ra Y$ be a $G$-cover with $X$ and $Y$ smooth.
Then there are $G$-equivariant inclusions of sheaves of $\Oh_Y$-modules as follows:
\begin{equation*}%\label{omegaYinomegaXinomegalog}
\Om_Y^1\ot \pi_* \Oh_X \hookrightarrow  \pi_* \Om_X^1 \hookrightarrow \Om_Y^1(\log \sB)^{**} \ot \pi_* \Oh_X \, ,
\end{equation*}
where the morphisms  are isomorphisms on the complement  $Y\setminus \sB$
of the branch divisor.

Moreover, let $\xi_i \in \pi_* \Oh_X$ be the generator of the ideal of the reduced divisor $R_i : = \pi^{-1} (\sB_i)$ and a maximal local  root of $b_i$
($b_i = \xi_i ^{m_i}$), for any $i=1, \ldots , r$.
Then $ \pi_* \Om_X^1$  is characterized as the subsheaf of $\Om_Y^1(\log \sB)^{**} \ot \pi_* \Oh_X$
such that at the smooth points of $\sB_i =\{ b_i=0\}$ it coincides with the subsheaf of $\hol_Y$-modules generated by $  \Om_Y^1 \ot \pi_* \Oh_X$
and by the elements $\xi_i ^k \operatorname{d} \log (b_i), \ k = 1, \dots, m_i-1$.
\end{prop}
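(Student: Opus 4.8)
The plan is to reduce everything to a local computation at the generic point of each branch divisor $\sB_i$, use Hartogs/reflexivity to globalize, and only then check $G$-equivariance (which is essentially automatic since everything is defined by canonical maps).

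First I would establish the chain of inclusions. The left inclusion $\Om_Y^1 \ot \pi_* \Oh_X \hookrightarrow \pi_* \Om_X^1$ comes from pulling back $1$-forms: $\pi^* \colon \pi^* \Om_Y^1 \to \Om_X^1$, which is injective because $\pi$ is generically étale (the extension $\CC(Y) \subset \CC(X)$ is separable, indeed Galois), combined with the projection formula $\pi_*(\pi^* \Om_Y^1) = \Om_Y^1 \ot \pi_* \Oh_X$. The right inclusion follows from \eqref{starstar}: $\pi_* \Om_X^1 \hookrightarrow \Om_{\CC(Y)}^1 \ot_{\CC(Y)} \CC(X)$ sits inside $\Om_Y^1(\log \sB)^{**} \ot \pi_* \Oh_X$ once one checks it there — and since both sides are reflexive $\Oh_Y$-modules (being second duals, resp.\ pushforwards of locally free sheaves under a finite flat map over a smooth base), by Hartogs it suffices to check the inclusion in codimension one, i.e.\ at the generic points $\sB_i$. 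Away from $\sB$ both morphisms are isomorphisms because $\pi$ is étale there, so $\Om_X^1 = \pi^* \Om_Y^1$ locally.

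The heart of the matter is the local computation at a smooth point of $\sB_i$. Localize at the generic point of $\sB_i$, so $\Oh_{Y,\sB_i}$ is a DVR with uniformizer $b_i$. Since $\pi$ is finite flat and $X$ is normal, over this DVR $\pi_* \Oh_X$ splits (after completion, or étale-locally) as a product of DVRs, one for each prime of $X$ over $\sB_i$; by Abhyankar-type tameness (characteristic not dividing $2n$) the extension is $\mu$-cyclically ramified, and there is a local root $\xi_i$ of $b_i$ with $b_i = \xi_i^{m_i}$ generating the maximal ideal of the corresponding local ring, where $m_i$ is the ramification index. A local basis of $\Om_X^1$ at such a point is obtained from a local basis $\operatorname{d}y_2, \dots, \operatorname{d}y_d$ of $\Om_Y^1$ (transverse directions) together with $\operatorname{d}\xi_i$. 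Now
$$
\operatorname{d}\xi_i = \operatorname{d}\big(b_i^{1/m_i}\big) = \tfrac{1}{m_i}\, b_i^{1/m_i - 1}\operatorname{d}b_i = \tfrac{1}{m_i}\, \xi_i^{1-m_i}\, b_i\, \frac{\operatorname{d}b_i}{b_i} = \tfrac{1}{m_i}\, \xi_i\, \operatorname{d}\log(b_i),
$$
using $b_i = \xi_i^{m_i}$. More generally $\xi_i^{k}\operatorname{d}\log(b_i) = m_i\, \xi_i^{k-m_i+1}\operatorname{d}\xi_i$ lies in $\Om_X^1$ for $k \geq m_i - 1$, and conversely the $\Oh_Y$-module generated by $\Om_Y^1 \ot \pi_*\Oh_X$ and the elements $\xi_i^k \operatorname{d}\log(b_i)$ for $k = 1, \dots, m_i - 1$ is exactly $\pi_* \Om_X^1$ at that point: one checks that $\pi_*\Oh_X$ is a free $\Oh_Y$-module with basis the monomials $\xi_i^0, \xi_i^1, \dots, \xi_i^{m_i - 1}$ (times the idempotents for the other primes), so multiplying $\operatorname{d}\xi_i$ through this basis produces precisely $\xi_i^{k+1-m_i}\operatorname{d}\xi_i$, i.e.\ up to the unit $m_i$ the claimed generators $\xi_i^k \operatorname{d}\log(b_i)$ for $0 \le k \le m_i - 1$ (the $k=0$ term $\operatorname{d}\log(b_i) = m_i \xi_i^{1-m_i}\operatorname{d}\xi_i$ needs the correction that $\xi_i^{1-m_i}$ is not integral — so the generating set really starts at $k=1$; I'd be careful here, this is the subtle index bookkeeping). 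The generators $\operatorname{d}y_j$ in the transverse directions contribute the factor $\Om_Y^1 \ot \pi_*\Oh_X$.

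Finally, globalize: the subsheaf $\sF \subseteq \Om_Y^1(\log \sB)^{**} \ot \pi_*\Oh_X$ generated by $\Om_Y^1 \ot \pi_*\Oh_X$ and the $\xi_i^k\operatorname{d}\log(b_i)$ ($1 \le k \le m_i-1$, all $i$) agrees with $\pi_*\Om_X^1$ on $Y \setminus \operatorname{Sing}(\sB)$ by the above plus the étale statement on $Y \setminus \sB$; since $\pi_*\Om_X^1$ is reflexive and $\operatorname{Sing}(\sB)$ has codimension $\geq 2$ in $Y$, Hartogs forces equality everywhere — but I should take the reflexive hull of $\sF$ or argue that $\sF$ is already saturated, since $\pi_*\Om_X^1$ is the one known to be reflexive. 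For $G$-equivariance: $G$ acts on $\pi_*\Oh_X$ and on $\pi_*\Om_X^1$ compatibly with the inclusion into $\Om_{\CC(Y)}^1 \ot \CC(X)$ (where it acts only on the second factor), and it permutes the primes $R_i$ over $\sB_i$ and the local roots $\xi_i$ up to roots of unity, so the described generating subsheaf is $G$-stable; the left-hand inclusion $\Om_Y^1 \ot \pi_*\Oh_X$ is $G$-equivariant since $G$ acts trivially on $\Om_Y^1$ and on $\Oh_Y$.

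The main obstacle is the local ramification analysis: correctly identifying $\pi_*\Oh_X$ as a free module with the monomial basis in $\xi_i$ over the DVR $\Oh_{Y,\sB_i}$, handling the case of several primes of $X$ over one $\sB_i$ with possibly different ramification indices (the statement's phrasing with a single $m_i$ and a single $\xi_i$ suggests one reduces to the connected/single-prime situation, or that the $m_i$ is common by the Galois hypothesis — worth pinning down), and getting the index range $k = 1, \dots, m_i - 1$ exactly right in the differential computation $\operatorname{d}\xi_i \leftrightarrow \xi_i\operatorname{d}\log b_i$.
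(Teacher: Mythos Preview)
Your proposal is correct and follows essentially the same approach as the paper: reduce via reflexivity/Hartogs to a local computation at smooth points of $\sB_i$, then use the local normal form $b_i = \xi_i^{m_i}$ (the paper writes $y_1 = x_1^m$ in analytic local coordinates) to read off the generators $\xi_i^k\,\operatorname{d}\log(b_i)$, $1\le k\le m_i-1$, from $\xi_i^j\,\operatorname{d}\xi_i = \tfrac{1}{m_i}\xi_i^{j+1}\operatorname{d}\log(b_i)$. Your concern about several primes over $\sB_i$ with different ramification indices is indeed a non-issue by the Galois hypothesis, and the index bookkeeping you flag works out exactly as in the paper once you note that $\xi_i^{m_i}\operatorname{d}\log(b_i)=\operatorname{d}b_i$ already lies in $\Om_Y^1$.
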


\begin{proof}
The first arrow to the left is the push-forward under $\pi_*$ of the  natural inclusion
$$
(T\pi)^*\colon \pi^*\Om_Y^1 \hookrightarrow \Om_X^1 \, ,
$$
where $T\pi$ is the tangent map of $\pi$.

Now consider the morphism $\varphi$ in   \eqref{starstar}.
We first show that $({\rm Im}\varphi)_q \subset (\Om_Y^1(\log \sB)^{**} \ot \pi_* \Oh_X)_q$
for any $q\in Y':= Y\setminus \sing (\sB)$.
On the complement of $\sB$,  $Y\setminus \sB$,  
this  follows from the projection formula,
since  $\pi$ is not ramified on $\pi^{-1}(Y\setminus \sB)$ and hence
$$
\pi_* \Om^1_X \cong \Om^1_Y \otimes \pi_* \Oh_X  \cong 
\Om_Y^1(\log \sB)^{**} \ot \pi_* \Oh_X \, \quad \mbox{on $Y\setminus \sB$} .
$$
Let now $q\in \sB' := \sB \setminus \sing (\sB)$ and $p\in \pi^{-1}(q)$.
Choose local coordinates $y_1 , \ldots , y_d$ at $q$ and $x_1 , \ldots , x_d$
at $p$, such that $\pi$ has the following local expression: 
$$
y_1 = x_1^m ,  y_2 = x_2 ,  \ldots ,  y_d = x_d  \, .
$$ 
Hence $\Oh_{X,p}= \Oh_{Y,q}[x_1]/(x_1^m - y_1)$, and
the stalk $(\Om_Y^1)_q$ is generated over $\Oh_{Y,q}$ by
$$
\operatorname{d}y_1,  \operatorname{d}y_2 ,  \ldots , \operatorname{d}y_d ,$$
while the stalk $(\Om_X^1)_p$ is generated over $\Oh_{X,p}$ by
$$
\operatorname{d}x_1 , \operatorname{d}x_2 , \ldots , \operatorname{d}x_d,
$$
with the obvious relations 
\begin{eqnarray*}
m  \cdot \operatorname{d} \log (x_1)  &=& \operatorname{d} \log (y_1 )\, , \quad \mbox{equivalently} 
\quad \operatorname{d}x_1= \frac{x_1}{my_1}\operatorname{d}y_1 \, , \\
\operatorname{d}x_i &=& \operatorname{d}y_i \, , \quad i\geq 2 \, . 
\end{eqnarray*}
From this it follows  that $(\pi_*\Om_X^1)_q$ is generated
as $\Oh_{Y,q}$-module by the elements of the $G$-orbit of
$$
\begin{cases}
x_1^i \operatorname{d}y_j  \, \, \quad   0\leq i \leq m-1 \, ,  2\leq j \leq d ,  \\
x_1^k\frac{\operatorname{d}y_1}{y_1} \, , \quad 1\leq k \leq m-1  \\ 
\operatorname{d}y_1\,
\end{cases}
$$  
which implies the claim at the points $q\in \sB'$,
since $y_1 = 0$ is a defining equation for $\sB$ near $q$.
Moreover, we see that $(\pi_*\Om_X^1)_q $ is generated,  as an $\hol_Y$-module, by $(\Om_Y^1)_q \ot (\pi_*\Oh_X)_q$
and by the elements $x_1^k\frac{\operatorname{d}y_1}{y_1} \, , \quad 1\leq k \leq m-1 $.

Notice that this argument also shows that the inclusion 
$(\pi_*\Om_X^1)_q \subset (\Om_{Y,q}(\log \sB)^{**}\ot (\pi_*\Oh_X))_q$ is $G$-equivariant, 
if $q\in Y\setminus \sing ( \sB )$. 

To conclude the proof, observe  that $\pi_* \Om^1_X$ is locally free and  that   
$\Om_Y^1(\log \sB)^{**} \ot \pi_* \Oh_X$
is reflexive, so that any section of  $\pi_*\Om_X^1$ on $Y\setminus {\rm Sing}(\sB)$
has a unique extension in $\Om_{Y}^1(\log \sB)^{**}\ot \pi_*\Oh_X$. 

\end{proof}

Let now $\Theta_X := \mathcal{H}om_{\Oh_X} (\Om_X^1, \Oh_X)$ and 
$\Theta_Y :=  \mathcal{H}om_{\Oh_Y} (\Om_Y^1, \Oh_Y)$ 
be the tangent sheaves of $X$ and $Y$, respectively. In the following proposition we
relate the sheaf $\pi_* \Theta_X$ with $\Theta_Y$.  

 Define as  in \cite{saito} or \cite[Def. 9.15]{cime88}    the sheaf $\Theta_Y(-\log \sB)$ 
 of logarithmic vector fields on $Y$ with respect 
to $\sB = \{ b=0\}$ as 
$$
\Theta_Y(-\log \sB) : = \{  v |  v \cdot  \log (b) \in \hol_Y\} =  \{  v |  v  (b) \in  b \hol_Y\} =  (  \Om_Y^1(\log \sB) )^*.
$$ 

Observe that the quotient sheaf $\Theta_Y  / \Theta_Y(-\log \sB)$ equals the equisingular normal sheaf 
$N'_{\sB|Y}$ of $\sB$ in $Y$, which coincides with the normal sheaf at the points where
$\sB$ is smooth (\cite{cime88}, rem. 9.16).

Let us recall that  the usual pairing $\Theta_Y \times \Om_Y^1 \to \Oh_Y$
extends to a  perfect pairing
$$
\Theta_Y(-\log \sB) \times \Om_Y^1 (\log \sB)^{**} \to \Oh_Y \, .
$$

\begin{prop}\label{pistartheta}
Let $\pi \colon X \ra Y$ be a $G$-cover with $X$ and $Y$ smooth.
Let $\sB\subset Y$ be the branch divisor of $\pi$.
Then the tangent map of $\pi$ identifies $\pi_* \Theta_X$ with a subsheaf of $\Theta_Y \ot \pi_* \Oh_X$.
Under this identification  we have the following inclusions of sheaves
$$
\Theta_Y(-\log \sB) \otimes \pi_* \Oh_X \subset \pi_* \Theta_X \subset  
\Theta_Y \otimes \pi_* \Oh_X \, 
$$
which are compatible with the $G$-actions.
Moreover the three sheaves coincide on $Y\setminus \sB$ and $\pi_* \Theta_X$ 
is characterized as the subsheaf of $\Theta_Y \otimes \pi_* \Oh_X$ sending $\pi_* \Omega^1_X$ to $\pi_* \Oh_X$.

More concretely, it is the subsheaf 
such that at the smooth points of $\sB_i =\{ b_i=0\}$  coincides with the subsheaf of $\hol_Y$-modules generated by $ \Theta_Y(-\log \sB) \otimes \pi_* \Oh_X $
and by $\xi_i ^{-1}  b_i \ \frac{\partial}{\partial b_i} $, where $\xi_i \in \pi_* \Oh_X$ is the generator of the ideal of the reduced divisor $R_i : = \pi^{-1} (\sB_i)$ (and a maximal local  root of $b_i$, $b_i = \xi_i ^{m_i}$),
while $b_i \ \frac{\partial}{\partial b_i} $ is a local generator of $N'_{\sB|Y}$.
\end{prop}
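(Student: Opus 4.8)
The plan is to obtain Proposition~\ref{pistartheta} by dualising Proposition~\ref{pistaromega}, using the perfect pairing $\Theta_Y(-\log\sB)\times\Om_Y^1(\log\sB)^{**}\to\hol_Y$ recalled just above to convert the description of $\pi_*\Om_X^1$ into one for $\pi_*\Theta_X$. First I would set up the ambient inclusion: dualising the $\CC(X)$-linear isomorphism \eqref{star} gives a canonical isomorphism $\Theta_{\CC(X)}\cong\Theta_{\CC(Y)}\ot_{\CC(Y)}\CC(X)$, under which the tangent map $T\pi\colon\Theta_X\to\pi^*\Theta_Y$ becomes a generic isomorphism. Since $X$ is smooth, $\Theta_X$ is locally free, hence torsion free, so $T\pi$ is injective; pushing forward and applying the projection formula for the finite morphism $\pi$ yields $\pi_*\Theta_X\hookrightarrow\pi_*\pi^*\Theta_Y=\Theta_Y\ot\pi_*\Oh_X$, which is $G$-equivariant because $T\pi$ is. Note $\pi$ is flat (Proposition~\ref{fcm}), so $\pi_*\Theta_X$ and $\Theta_Y\ot\pi_*\Oh_X$ are both locally free over $\hol_Y$.

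Next I would prove the characterisation. Since $Y$ is smooth, $\Theta_Y\ot\pi_*\Oh_X=\mathcal{H}om_{\hol_Y}(\Om_Y^1,\pi_*\Oh_X)$, while $\pi_*\Theta_X=\pi_*\mathcal{H}om_{\Oh_X}(\Om_X^1,\Oh_X)$. I claim $\pi_*\Theta_X$ is exactly the subsheaf of those $v\in\Theta_Y\ot\pi_*\Oh_X$ with $v(\pi_*\Om_X^1)\subset\pi_*\Oh_X$. The inclusion ``$\subseteq$'' is immediate: a regular vector field on $\pi^{-1}(U)$ sends $\Om_X^1(\pi^{-1}U)$ into $\Oh_X(\pi^{-1}U)$, hence $(\pi_*\Om_X^1)(U)$ into $(\pi_*\Oh_X)(U)$. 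Conversely, via the first step such a $v$ is a rational vector field on $X$ which over $\pi^{-1}(U)$ maps $\Om_X^1$ into $\Oh_X$; as $\Om_X^1$ is locally free with $\mathcal{H}om_{\Oh_X}(\Om_X^1,\Oh_X)=\Theta_X$, this forces $v\in\Theta_X(\pi^{-1}U)=(\pi_*\Theta_X)(U)$. In particular $\pi_*\Theta_X\subset\Theta_Y\ot\pi_*\Oh_X$, and over $Y\setminus\sB$, where $\pi$ is unramified and $T\pi$ an isomorphism, the three sheaves coincide.

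Then comes the logarithmic inclusion and the local picture. Tensoring the perfect pairing with $\pi_*\Oh_X$ on both factors and composing with the multiplication $\pi_*\Oh_X\ot_{\hol_Y}\pi_*\Oh_X\to\pi_*\Oh_X$ gives a pairing $(\Theta_Y(-\log\sB)\ot\pi_*\Oh_X)\times(\Om_Y^1(\log\sB)^{**}\ot\pi_*\Oh_X)\to\pi_*\Oh_X$ compatible with the one above; since $\pi_*\Om_X^1\subset\Om_Y^1(\log\sB)^{**}\ot\pi_*\Oh_X$ by Proposition~\ref{pistaromega}, every local section of $\Theta_Y(-\log\sB)\ot\pi_*\Oh_X$ maps $\pi_*\Om_X^1$ into $\pi_*\Oh_X$, hence lies in $\pi_*\Theta_X$ by the characterisation, which gives the left inclusion $G$-equivariantly. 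For the explicit description I would use the local model of Proposition~\ref{pistaromega}: near a smooth point $q$ of $\sB_i$ choose coordinates with $y_1=x_1^{m}$, $y_j=x_j$ ($j\ge2$) and $\Oh_{X,p}=\Oh_{Y,q}[x_1]/(x_1^{m}-y_1)$, $m=m_i$. By the chain rule $\tfrac{\partial}{\partial x_1}=m\,x_1^{m-1}\tfrac{\partial}{\partial y_1}$ and $\tfrac{\partial}{\partial x_j}=\tfrac{\partial}{\partial y_j}$ for $j\ge2$, so $(\pi_*\Theta_X)_q$ is generated over $\Oh_{Y,q}$ by the $x_1^{i}\tfrac{\partial}{\partial y_j}$ ($0\le i\le m-1$, $j\ge2$) together with the $m\,x_1^{m-1+i}\tfrac{\partial}{\partial y_1}$ ($0\le i\le m-1$); reducing modulo $x_1^{m}=y_1$, and recalling that $m$ is invertible (cf. the proof of Proposition~\ref{pistaromega}), the latter become $x_1^{m-1}\tfrac{\partial}{\partial y_1}$ and $x_1^{k}\bigl(y_1\tfrac{\partial}{\partial y_1}\bigr)$ with $0\le k\le m-2$. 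Writing $\xi_i=x_1$ (so $b_i=y_1=\xi_i^{m_i}$, and $b_i\tfrac{\partial}{\partial b_i}=y_1\tfrac{\partial}{\partial y_1}$, which with $\tfrac{\partial}{\partial y_2},\dots,\tfrac{\partial}{\partial y_d}$ generates $\Theta_Y(-\log\sB)$ near $q$ and is identified with a local generator of $N'_{\sB|Y}$), the stalk of $\Theta_Y(-\log\sB)\ot\pi_*\Oh_X$ at $q$ is the $\Oh_{Y,q}$-span of $1,x_1,\dots,x_1^{m-1}$ times $y_1\tfrac{\partial}{\partial y_1},\tfrac{\partial}{\partial y_2},\dots,\tfrac{\partial}{\partial y_d}$, hence contains all of the above generators except $x_1^{m-1}\tfrac{\partial}{\partial y_1}=\xi_i^{-1}b_i\tfrac{\partial}{\partial b_i}$ --- the single extra generator claimed. $G$-equivariance at $q$ follows as in Proposition~\ref{pistaromega}, and since both sheaves are locally free and agree with the described sheaf away from $\operatorname{Sing}(\sB)$, of codimension $\ge2$, Hartogs' theorem yields the global statement.

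The step I expect to need most care is the characterisation: pinning down $\pi_*\Theta_X$ as the subsheaf of $\Theta_Y\ot\pi_*\Oh_X$ cut out by one linear condition against $\pi_*\Om_X^1$, while keeping the $\hol_Y$- versus $\pi_*\Oh_X$-linearity straight, and the monomial bookkeeping in the last step: one must check that among the $x_1^{m-1+i}\tfrac{\partial}{\partial y_1}$, reduced modulo $x_1^{m}=y_1$, only $x_1^{m-1}\tfrac{\partial}{\partial y_1}$ is genuinely new, whereas $\xi_i^{k}\cdot\xi_i^{-1}b_i\tfrac{\partial}{\partial b_i}=\xi_i^{k-1}b_i\tfrac{\partial}{\partial b_i}$ already lies in $\Theta_Y(-\log\sB)\ot\pi_*\Oh_X$ for $k\ge1$. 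This is precisely the source of the asymmetry with Proposition~\ref{pistaromega}, which needed the $m_i-1$ extra generators $\xi_i^{k}\operatorname{d}\log(b_i)$.
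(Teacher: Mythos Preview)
Your proof is correct and follows essentially the same strategy as the paper: establish the inclusion via $T\pi$, reduce to smooth points of $\sB$ by Hartogs, and carry out the same local computation in the model $y_1=x_1^m$ to identify the single extra generator $\xi_i^{-1}b_i\,\partial/\partial b_i$. The one noteworthy difference is that you actually \emph{prove} the characterisation of $\pi_*\Theta_X$ as the subsheaf sending $\pi_*\Om_X^1$ into $\pi_*\Oh_X$ (the paper merely asserts it) and then use this together with Proposition~\ref{pistaromega} to obtain the inclusion $\Theta_Y(-\log\sB)\otimes\pi_*\Oh_X\subset\pi_*\Theta_X$ conceptually via duality, whereas the paper reads off that inclusion directly from the local generators; your route is slightly cleaner and makes the duality with the cotangent case explicit.
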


\begin{proof}
The tangent morphism $T\pi \colon \Theta_X \ra \pi^* \Theta_Y$ gives a $G$-equivariant morphism of sheaves
\begin{equation}\label{Tpi}
\pi_* \Theta_X \ra \Theta_Y \ot \pi_* \Oh_X \, .
\end{equation}
We first prove that \eqref{Tpi} is injective and that its image contains $\Theta_Y(-\log \sB) \otimes \pi_* \Oh_X$.
By Hartogs' theorem and the definition of $\Theta_Y(-\log \sB)$ it suffices to prove this 
on the  complement of $\sing ( \sB )$. 

 On $Y\setminus \sB$, $\pi$ is not ramified, hence \eqref{Tpi} is an isomorphism. 
Let now $q \in \sB\setminus \sing ( \sB )$ and let $p\in \pi^{-1}(q)$. Choose as before coordinates $y_1 , \ldots , y_d$
 at $q$ and $x_1 , \ldots , x_d$  at $p$, such that $\pi$ has the following local
expression:
$$
y_1 = x_1^m , y_2 = x_2 , \ldots , y_d = x_d \, .
$$
Then $(\pi_* \Theta_X)_q$ is generated by the $G$-orbit of 
$$
x_1^k\left( \frac{\partial}{\partial x_1} + mx_1^{m-1}\frac{\partial}{\partial y_1} \right) , 
x_1^k\frac{\partial}{\partial y_2} , \ldots , x_1^k\frac{\partial}{\partial y_d} , 0\leq k \leq m-1 ,
$$
 as $\Oh_{Y,q}$-module. The image of these generators under  \eqref{Tpi} is the $G$-orbit of
$$
m \frac{\partial}{\partial y_1} \ot x_1^{m-1+k}, \frac{\partial}{\partial y_2} \ot x_1^k , \ldots , 
\frac{\partial}{\partial y_d} \ot x_1^k , \, 0\leq k \leq m-1\, .
$$
From this it follows that \eqref{Tpi} is injective  and that its image contains the sheaf 
$\Theta_Y(-\log \sB) \otimes \pi_* \Oh_X$,
which is generated by the $G$-orbit of 
$$
y_1 \frac{\partial}{\partial y_1} \ot x_1^k = \frac{\partial}{\partial y_1} \ot x_1^{m+k}  , 
\frac{\partial}{\partial y_2} \ot x_1^k , 
\ldots , \frac{\partial}{\partial y_d} \ot x_1^k ,  \quad 0\leq k \leq m-1, 
$$
as $ \Oh_{Y,q}$-module. 
The only  missing term to get  from $\Theta_Y(-\log \sB) \otimes \pi_* \Oh_X$ the full direct image is then $y_1 \frac{\partial}{\partial y_1} \ot x_1^{-1}  $. We observe that 
$y_1 \frac{\partial}{\partial y_1} \ot x_1^{-1}  = \frac{1}{x_1} y_1 \frac{\partial}{\partial y_1}$ and 
the vector field $y_1 \frac{\partial}{\partial y_1}$  is the local generator of 
$N'_{\sB|Y} = \Theta_Y / \Theta_Y(-\log \sB)$.
\end{proof}

\begin{rem}
Notice that the previous results are valid for more general Galois covers of complex 
manifolds. 
\end{rem}

\subsection{The non-Galois case}
We  extend propositions \ref{pistaromega} and \ref{pistartheta}
to branched covers $\pi \colon X \ra Y$ which are not necessarily 
Galois. Notice that also in this case we have an inclusion of the sheaf 
$\pi_* \Om_X^1$ in the constant sheaf $\Om^1_{\CC ( Y )} \ot_{\CC ( Y )} \CC ( X )$
as in \eqref{starstar}.
\begin{prop}\label{pistaromegageneral}
Let $\pi \colon X \ra Y$ be a branched cover with $X$
and $Y$ smooth. Let $\sB\subset Y$ be the branch divisor 
of $\pi$. Then we have the following inclusions of $\Oh_Y$-modules:
$$
\Om_Y^1 \ot \pi_*\Oh_X \hookrightarrow 
\pi_* \Om_X^1 \hookrightarrow \Om_Y^1(\log \sB)^{**} \ot \pi_* \Oh_X \, ,
$$
where the morphism on the left is induced by the  pull-back morphism
$(T\pi)^* \colon \pi^* \Om_Y^1 \ra \Om_X^1$ and the one on the right is induced by  \eqref{starstar}.
\end{prop}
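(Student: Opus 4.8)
The plan is to reduce to the Galois situation already treated in Proposition \ref{pistaromega}. First I would recall that purity of the branch locus (which needs only $X,Y$ smooth, not the Galois hypothesis) guarantees that $R\subset X$ and $\sB\subset Y$ are divisors, and that by Hartogs' theorem it suffices to produce the two inclusions and verify them on the open set $Y':=Y\setminus\operatorname{Sing}(\sB)$, since $\pi_*\Om_X^1$ is locally free and $\Om_Y^1(\log\sB)^{**}\ot\pi_*\Oh_X$ is reflexive. On $Y\setminus\sB$ both claimed maps are isomorphisms by the projection formula, exactly as in the Galois case, because $\pi$ is unramified there.

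The left inclusion $\Om_Y^1\ot\pi_*\Oh_X\hookrightarrow\pi_*\Om_X^1$ is immediate and needs no genericity: it is the $\pi_*$ of the cotangent map $(T\pi)^*\colon\pi^*\Om_Y^1\hookrightarrow\Om_X^1$, which is injective since $\pi$ is generically étale and $\pi^*\Om_Y^1$ is locally free (hence torsion-free), so injectivity at the generic point propagates. The right inclusion follows from the inclusion $\varphi$ of \eqref{starstar} into the constant sheaf $\Om^1_{\CC(Y)}\ot_{\CC(Y)}\CC(X)$, once I check $(\operatorname{Im}\varphi)_q\subset(\Om_Y^1(\log\sB)^{**}\ot\pi_*\Oh_X)_q$ for every $q\in Y'$.

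To verify this local containment at a smooth point $q\in\sB$, I would work over the strict henselization (or formal completion) of $\Oh_{Y,q}$. The key structural input is that $\pi$ need not be Galois but, at a smooth point of $\sB$ with $X$ smooth, the map is still finite and flat with $X$ smooth, and the branch divisor is a smooth curve germ $\{y_1=0\}$; the fibre over $q$ decomposes as a disjoint union of smooth points, and near each such point $p$ the map $\pi$ is, in suitable local coordinates, of the standard cyclic form $y_1=x_1^{m}$, $y_i=x_i$ for $i\geq 2$, where $m=m(p)$ is the local ramification index. (This is the local normal form of a finite flat morphism of smooth varieties over a smooth branch divisor; no Galois hypothesis is used.) Then the computation of $(\pi_*\Om_X^1)_q$ decomposes as a direct sum over the points $p\in\pi^{-1}(q)$, and each summand is generated over $\Oh_{Y,q}$ by the elements $x_1^i\,\operatorname{d}y_j$ ($0\le i\le m-1$, $j\ge 2$), $x_1^k\,\operatorname{d}\log(y_1)$ ($1\le k\le m-1$), and $\operatorname{d}y_1$ — exactly the list in the proof of Proposition \ref{pistaromega}. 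This shows $(\pi_*\Om_X^1)_q$ is contained in $(\Om_Y^1)_q\ot(\pi_*\Oh_X)_q$ plus terms of the form (element of $\pi_*\Oh_X$)$\cdot\operatorname{d}\log(b)$, hence lands inside $\Om_Y^1(\log\sB)^{**}\ot\pi_*\Oh_X$.

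The only genuine subtlety — and the step I expect to be the main obstacle — is justifying that the local normal form $y_1=x_1^m$ is available at a smooth point of $\sB$ without invoking the $G$-action, i.e. that a finite flat morphism of smooth varieties, étale away from a smooth divisor, is étale-locally a disjoint union of standard cyclic covers. This is classical (Abhyankar's lemma / the structure of tamely — here, in characteristic zero, automatically tamely — ramified extensions of a discrete valuation ring with algebraically closed residue field), but it must be cited rather than rederived from the Galois theory used in Section 2. Once that normal form is in hand, the remainder is the verbatim local computation from Proposition \ref{pistaromega}, followed by the Hartogs extension argument to pass from $Y'$ to all of $Y$, completing the proof. $\Box$
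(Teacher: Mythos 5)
Your proposal is correct and follows essentially the same route as the paper: the left inclusion as the pushforward of the injective cotangent map, reduction to the smooth points of $\sB$ together with the Hartogs/reflexivity extension, and the local computation using the cyclic normal form $y_1=x_{i,1}^{e_i}$ on each of the disjoint local branches of $\pi^{-1}(V)$. The one step you flag as needing an outside citation (the local normal form in the absence of a Galois action) is handled in the paper exactly as you anticipate: it works with an analytic neighbourhood of a smooth point of $\sB$ and cites Namba's book on branched coverings for the decomposition $\pi^{-1}(V)=U_1\sqcup\ldots\sqcup U_s$ with the standard local form on each piece.
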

\begin{proof}
Since $\pi \colon X\setminus R \ra Y\setminus \sB$ is non-ramified,
$(T\pi)^* \colon \pi^*\Om_Y^1 \ra \Om_X^1$ is injective. Applying the push-forward 
functor $\pi_*$ we obtain an injective homomorphism $\Om_Y^1 \ot \pi_* \Oh_X 
\hookrightarrow \pi_*\Om^1_X$. 

We now prove  that the morphism \eqref{starstar} has image in $\Om^1_Y (\log \sB)^{**} \ot \pi_* \Oh_X$.
Notice that by the same argument as in the proof of Proposition \ref{pistaromega}, it suffices to prove
this on the complement of the singular locus of $\sB$.
So  we assume that $\sB$ is smooth. The assertion is true on $Y\setminus \sB$, because 
here $\pi$ is unramified. Let now $q\in \sB$ and let $V\subset Y$ be an open neighbourhood 
(in the complex topology) of $q$, such that: 
$$
\pi^{-1}(V) = U_1 \sqcup \ldots \sqcup U_s \, ,
$$ 
with $U_1, \ldots , U_s \subset X$  disjoint subsets; for any $i=1, \ldots , s$  there are coordinates
$(x_{i,1}, \ldots , x_{i,d})$ on $U_i$ and $(y_1 , \ldots , y_d)$ on $V$, such
that the restriction of $\pi$ on $U_i$ has the form
$$
(x_{i,1}, \ldots , x_{i,d}) \mapsto (y_1 = x_{i,1}^{e_i}, y_2 = x_{i,2},  \ldots , y_d=x_{i,d}) \, , 
$$
$e_1, \ldots , e_s$ are integers $\geq 1$ (see e.g. \cite{Namba}).
Then $(\pi_* \Om_X^1)_{|V}$ is generated as $\Oh_Y ( V )$-module by:
$$
\begin{cases}
x_{i,1}^{k_i}\operatorname{d}x_{i,1} = \frac{x_{i,1}^{k_i+1}}{e_i}\frac{\operatorname{d}y_1}{y_1} \, , \quad k_i = 0, \ldots , e_i -1
\, , i=1, \ldots , s \\
x_{i,1}^{k_i}\operatorname{d}x_{i,j} = x_{i,1}^{k_i}\operatorname{d}y_j \, , \quad j=2 , \ldots , d \, , i = 1, \ldots , s \, , k_i = 0, \ldots , e_1 
-1\, . 
\end{cases}
$$
Since $y_1 = 0$ is a local equation for $\sB$, it follows that the image of $\pi_* \Om^1_X$
under \eqref{starstar} is contained in $\Om_Y^1 (\log \sB)^{**} \ot \pi_* \Oh_X$.
\end{proof}
Analogously, for the tangent sheaves we have the following 
\begin{prop}
Let $\pi \colon X \ra Y$ be a branched cover, with $X$ and $Y$
smooth. Let $\sB\subset Y$ be the branch locus. Then
the tangent map of $\pi$ induces an injective morphism of sheaves
$$
\pi_* \Theta_X \hookrightarrow \Theta_Y \ot \pi_* \Oh_X
$$ 
whose image contains the sheaf $\Theta_Y (-\log \sB) \ot \pi_* \Oh_X$.
\end{prop}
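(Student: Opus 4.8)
The plan is to run exactly the same local computation as in the proof of Proposition \ref{pistartheta}, but now without assuming a $G$-action, and to glue the local pictures using Hartogs' theorem. First I would note that since $\pi\colon X\setminus R\ra Y\setminus\sB$ is unramified, the tangent map $T\pi\colon\Theta_X\ra\pi^*\Theta_Y$ is an isomorphism there, so pushing forward we get that \eqref{Tpi}-type morphism $\pi_*\Theta_X\ra\Theta_Y\ot\pi_*\Oh_X$ is an isomorphism over $Y\setminus\sB$; in particular it is injective, since $\pi_*\Theta_X$ is torsion-free ($X$ smooth, hence $\pi_*\Theta_X$ a subsheaf of the constant sheaf via the dual of \eqref{starstar}). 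It remains to check that the image contains $\Theta_Y(-\log\sB)\ot\pi_*\Oh_X$, and by Hartogs' theorem (the sheaf $\Theta_Y(-\log\sB)\ot\pi_*\Oh_X$ is locally free, hence reflexive) it is enough to verify this on $Y\setminus\sing(\sB)$, i.e. we may assume $\sB$ smooth.

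Next I would localize as in Proposition \ref{pistaromegageneral}: choose $q\in\sB$ and a neighbourhood $V$ with $\pi^{-1}(V)=U_1\sqcup\cdots\sqcup U_s$, coordinates $(x_{i,1},\ldots,x_{i,d})$ on $U_i$ and $(y_1,\ldots,y_d)$ on $V$ with $\pi|_{U_i}$ given by $y_1=x_{i,1}^{e_i},\ y_j=x_{i,j}$ for $j\ge 2$. Then $(\pi_*\Theta_X)_{|V}=\bigoplus_{i=1}^s(\pi_*\Theta_{U_i})_{|V}$, and on each factor $\Theta_{U_i}$ is freely generated over $\Oh_{U_i}$ by $\partial/\partial x_{i,1},\ \partial/\partial x_{i,2},\ldots,\partial/\partial x_{i,d}$. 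Pushing down, $(\pi_*\Theta_{U_i})_{|V}$ is generated as an $\Oh_Y(V)$-module by $x_{i,1}^{k}\,\partial/\partial x_{i,1}$ and $x_{i,1}^{k}\,\partial/\partial x_{i,j}$ for $0\le k\le e_i-1$, $2\le j\le d$. Under $T\pi$ the vector field $\partial/\partial x_{i,1}$ maps to $e_i x_{i,1}^{e_i-1}\,\partial/\partial y_1$ and $\partial/\partial x_{i,j}$ to $\partial/\partial y_j$; hence the images of the generators are, up to units, $x_{i,1}^{e_i-1+k}\,\partial/\partial y_1$ and $x_{i,1}^{k}\,\partial/\partial y_j$ for $0\le k\le e_i-1$. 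On the other hand $\Theta_Y(-\log\sB)$ near $q$ is freely generated by $y_1\,\partial/\partial y_1,\ \partial/\partial y_2,\ldots,\partial/\partial y_d$, so $\Theta_Y(-\log\sB)\ot\pi_*\Oh_X$ restricted to the $i$-th component is generated by $y_1\,\partial/\partial y_1\ot x_{i,1}^{k}=\partial/\partial y_1\ot x_{i,1}^{e_i+k}$ and $\partial/\partial y_j\ot x_{i,1}^{k}$, $0\le k\le e_i-1$. Comparing, every generator of $\Theta_Y(-\log\sB)\ot\pi_*\Oh_X$ on the $i$-th component lies in the image: the $\partial/\partial y_j$ ($j\ge 2$) generators agree on the nose, and $\partial/\partial y_1\ot x_{i,1}^{e_i+k}$ is the image of $x_{i,1}^{k+1}\,\partial/\partial x_{i,1}$ (up to the unit $e_i$), which is among the listed generators since $1\le k+1\le e_i$. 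Summing over $i$ gives the desired containment over $V$, and Hartogs extends it over all of $Y$.

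The one point requiring a little care — and the closest thing to an obstacle — is the bookkeeping at the ``top'' index $k=e_i-1$: the generator $x_{i,1}^{e_i}\,\partial/\partial x_{i,1}=y_1\,\partial/\partial x_{i,1}$ of $\pi_*\Theta_{U_i}$ maps to $e_i y_1 x_{i,1}^{e_i-1}\,\partial/\partial y_1$, which is already $\Oh_Y$-divisible by $y_1$ but is genuinely needed to produce $y_1\,\partial/\partial y_1\ot x_{i,1}^{e_i-1}$; one must check that this element indeed lies in $\pi_*\Theta_X$ and not merely in its saturation, which it does because $x_{i,1}^{e_i}\partial/\partial x_{i,1}$ is a regular vector field on $U_i$. (Equivalently, one can phrase the argument as: $\pi_*\Theta_X$ is exactly the subsheaf of $\Theta_Y\ot\pi_*\Oh_X$ whose image in $\Theta_Y/\Theta_Y(-\log\sB)\ot\pi_*\Oh_X\cong N'_{\sB|Y}\ot\pi_*\Oh_X$ is killed, but for the present statement only the inclusion is asserted, so the direct generator-chase suffices.) No $G$-equivariance statement is needed here, which is precisely why the Galois hypothesis can be dropped.
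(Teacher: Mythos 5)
Your proof is correct and is exactly the argument the paper leaves implicit behind the word ``Analogously'': you combine the local normal form of a branched cover used in the proof of Proposition \ref{pistaromegageneral} with the generator computation of the Galois case (Proposition \ref{pistartheta}), check the containment at smooth points of $\sB$, and extend across $\sing (\sB)$. (Only your parenthetical justification of that last reduction is slightly misplaced: what one needs is that local sections of $\pi_*\Theta_X$ extend over the codimension-two locus $\sing(\sB)$ — which holds because $X$ is smooth, $\Theta_X$ is locally free and $\pi$ is finite — rather than reflexivity of $\Theta_Y(-\log \sB)\ot \pi_*\Oh_X$, which moreover need not be locally free in general.)
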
 

\begin{rem}We can obtain  more precise results by taking the Galois closure $p \colon W \ra Y$, which is a normal variety,
and writing $X = W / H$ where $H$ is a suitable subgroup of the Galois group $G$.
The only problem is that $W$ may not be smooth, and we denote then by $W'$ its smooth locus.
Then we can write $\pi_* \Om_X^1$ as essentially the submodule of $H$-invariants inside $p_* \Om_W^1$: for instance
we have
$$ \pi_* \Om_X^1 =  ((p_* \Om_W^1)^H )^{**},$$
since the regular 1-forms on $X' = W' /H$ are just the $H$-invariant 1-forms on $W'$.

\end{rem}

\section{Line bundles and divisorial sheaves on flat double covers}

In this section we  consider the following  general situation. We have a  flat finite double cover, where $Y$ is smooth: 

$$ q : Z \ra Y, \ Z = \operatorname{Spec} (q_* \hol_Z) ,$$
$$  \sR := q_* \hol_Z   = (\hol_Y \oplus z \hol_Y (-L)/ (z^2 - F)), \ F  \in H^0 (\hol_Y (2L)).$$ 

Our goal is to have  a description of divisorial sheaves, that is, rank 1  reflexive sheaves $\sL$ on the Gorenstein variety $Z$
in terms of their direct image $q_* ( \sL) = : \sN.$

By flatness of $\sL$ over $Y$, which we assume throughout this section, 
$\sN$ is a rank two vector bundle on $Y$ (a locally free sheaf of rank 2),
and its $\sR$- module structure is fully encoded in an endomorphism 
$$ N : \sN (-L) \ra \sN $$
such that  $$ N^2  = F \cdot  \operatorname{Id}_{\sN} : \sN (-2L) \ra \sN .$$
In particular, observe that $\operatorname{Tr} (N) = 0, \det (N) = - F$,
so that, on any open set where $\sN$ and the divisor $L$ are   trivialized,
$N$ is given by a matrix  of the form
$$
\left(
\begin{matrix}
a & b \\
& \\
c & - a
\end{matrix}
\right) \ ,  
\ a^2 + bc =  F.
$$
Conversely, any such pair $(\sN, N)$ as above determines a divisorial  sheaf $\sL$ on $Z$.

\begin{lemma}\label{free}
Given a pair  $(\sN, N)$ where $ N : \sN (-L) \ra \sN $ satisfies $ N^2  = F \cdot  \operatorname{Id}_{\sN} : \sN (-2L) \ra \sN ,$
it determines a saturated rank 1 torsion free $\sR$-module which is locally free exactly  in the points of $Y$
where the endomorphism $N$ does not vanish. In particular, a divisorial sheaf $\sL$ on $Z$.
\end{lemma}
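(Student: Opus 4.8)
The plan is to read the $\hol_Z$-module $\sL$ off from the pair $(\sN,N)$ through the affine morphism $q$, and then to establish the three assertions — torsion-free of rank $1$; saturated, i.e.\ reflexive; locally free precisely off the zero locus of $N$ — by working fibrewise over $Y$. Since $q$ is affine with $q_*\hol_Z=\sR$, taking $q_*$ identifies (quasi-)coherent $\hol_Z$-modules with (quasi-)coherent $\sR$-modules. Giving $\sN$ an $\sR$-module structure extending its $\hol_Y$-module structure amounts exactly to giving the $\hol_Y$-linear action of $z$; as $z$ is a section of $z\,\hol_Y(-L)\subseteq\sR$, this action is a morphism $N\colon\sN(-L)\to\sN$, and the one relation $z^2=F$ in $\sR$ is precisely $N^2=F\cdot\operatorname{Id}_\sN$. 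Thus $(\sN,N)$ is literally the same datum as a coherent $\hol_Z$-module $\sL$ with $q_*\sL\cong\sN$, and $\sL$ is flat over $Y$ because $\sN$ is $\hol_Y$-locally free of rank $2$.

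Next I would treat rank, torsion-freeness and reflexivity. Since $Z$ is a variety (equivalently, $F$ is not a square in $\CC(Y)$), the ring $\CC(Z)=\sR\otimes_{\hol_Y}\CC(Y)=\CC(Y)[z]/(z^2-F)$ is a field, of degree $2$ over $\CC(Y)$, and $\sR$ embeds in it (being $\hol_Y$-torsion-free). As $\sN$ is $\hol_Y$-locally free it embeds in its generic stalk $\sN\otimes_{\hol_Y}\CC(Y)$, a $\CC(Y)$-vector space of dimension $2$, hence a $\CC(Z)$-vector space of dimension $1$; so $\sL=\sN$ is $\sR$-torsion-free of rank $1$. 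For reflexivity I argue as in the proof of Proposition \ref{fcm}: a regular sequence in $\hol_{Y,y}$ remains a regular sequence on the $\hol_Y$-flat module $\sL$, so $\operatorname{depth}_{\hol_{Z,p}}\sL_p\ge\dim\hol_{Y,q(p)}=\dim\hol_{Z,p}$ at every point $p$ of $Z$ (the last equality by flatness of $q$), whence $\sL$ is a maximal Cohen--Macaulay $\hol_Z$-module. Since $\hol_Z$ is Gorenstein — locally it is the hypersurface $z^2-F$ in the regular ring $\hol_Y[z]$ — and maximal Cohen--Macaulay modules over a Gorenstein ring are reflexive, $\sL$ is reflexive of rank $1$, i.e.\ a divisorial sheaf on $Z$, in particular a saturated torsion-free $\sR$-module of rank $1$.

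Finally — and this is the substantive point — I would determine the locally free locus. Fix $y\in Y$; shrinking $Y$ we may trivialize $\sN\cong\hol_Y e_1\oplus\hol_Y e_2$ and $\hol_Y(L)\cong\hol_Y$, so that $N=\left(\begin{smallmatrix}a&b\\ c&-a\end{smallmatrix}\right)$ with $a^2+bc=F$ in $\hol_{Y,y}$, and $\sL_y:=\sL\otimes_{\hol_Y}\hol_{Y,y}=\hol_{Y,y}e_1\oplus\hol_{Y,y}e_2$ is an $\sR_y:=\hol_{Y,y}[z]/(z^2-F)$-module via $z\cdot v=Nv$. If $N$ does not vanish at $y$ and $F(y)\ne0$, the discriminant $4F$ of $z^2-F$ is a unit, so $\sR_y$ is étale over the regular ring $\hol_{Y,y}$, hence regular, and a reflexive rank-$1$ module over a regular ring is locally free of rank $1$. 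If $N$ does not vanish at $y$ and $F(y)=0$, then $\det N(y)=-F(y)=0$, so $N(y)$ is a nonzero traceless matrix of rank $1$; then $b$ or $c$ is a unit in $\hol_{Y,y}$ (if $b(y)=c(y)=0$ then $F(y)=a(y)^2$ forces $a(y)=0$, hence $N(y)=0$), say $c$, and then $z\cdot e_1=ae_1+ce_2$ gives $e_2=c^{-1}(z-a)\cdot e_1$, so $\sL_y=\sR_y\cdot e_1$ and the resulting surjection $\sR_y\twoheadrightarrow\sL_y$ of free rank-$2$ $\hol_{Y,y}$-modules is an isomorphism, so $\sL_y\cong\sR_y$ is free of rank $1$ (the case $b$ a unit is symmetric, via $z\cdot e_2=be_1-ae_2$). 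Conversely, if $N$ vanishes at $y$ then $a,b,c\in\mathfrak m_y$, hence $F\in\mathfrak m_y$, so $\sR_y$ is local with $\sR_y\otimes\kappa(y)=\kappa(y)[z]/(z^2)$, on which $z$ acts as a nonzero nilpotent, whereas $z$ acts on $\sL_y\otimes\kappa(y)=\kappa(y)^{\oplus2}$ by the zero matrix $N(y)$; since by the rank count $\sL_y$ would have to be free of rank $1$ if $\sR_y$-projective, $\sL$ is not locally free at $y$. The only step that I expect to require genuine care is this last one — specifically producing a single $\sR_y$-generator of $\sL_y$ when $F(y)=0$ but $N(y)\ne0$, which works precisely because of the special shape of $N$ (traceless, $\det N=-F$); everything else is a formal unwinding of the equivalence $q_*$ together with standard depth/Gorenstein homological algebra.
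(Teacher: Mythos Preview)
Your proof is correct and in fact more complete than the paper's, which addresses only the locally free locus and leaves the torsion-free, rank-$1$ and reflexive claims implicit. Your depth argument (flatness over $\hol_Y$ forces $\sL$ to be maximal Cohen--Macaulay, hence reflexive over the Gorenstein ring $\hol_Z$) is a clean way to fill that gap.

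For the locally free locus, however, the paper's argument is more direct and avoids your case split. Rather than separating $F(y)\ne0$ from $F(y)=0$, it observes that a local section $(x,y)$ of $\sN\cong\hol_Y^2$ is an $\sR$-generator at $P$ iff $(x,y)$ and $N(x,y)=(ax+by,\,cx-ay)$ form an $\hol_{Y,P}$-basis, i.e.\ iff the determinant
\[
q(x,y)=cx^2-2axy-by^2
\]
is a unit at $P$; and this quadratic form is not identically zero on $\kappa(P)^2$ exactly when $(a,b,c)(P)\ne(0,0,0)$, i.e.\ $N(P)\ne0$. Your sub-case $F(y)=0$, $N(y)\ne0$ is a special instance of this (you exhibit $e_1$ or $e_2$ as generator), while your sub-case $F(y)\ne0$ invokes \'etale/regular-ring machinery that is not needed --- the same generator criterion works uniformly. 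So the paper trades your homological reflexivity argument for brevity, and your case analysis for a single quadratic-form criterion; each approach has its merits.
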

\proof
View locally $\sN$ as $\hol_Y^2$: then a local section $(x,y)$ is a local $\sR$ generator at a point $P$ if and only if 
$ (x,y) $ and $N(x,y)$ are a local basis. This means, in terms of the matrix $N$,
that $ (x,y) $  and $(ax + b y, cx - ay)$ are linearly independent; equivalently,
the determinant 
$$ q(x,y) =   c x^2 - 2 a xy - b y^2 \neq 0.$$ 
Hence $\sN$ is an invertible $\sR$-module if and only if the  quadratic form obtained by evaluating $a,b,c$ at $P$ is not identically zero,
which amounts to the condition that $a,b,c$ do not vanish simultaneously.

\qed
\begin{rem}
Of course, the points where $N$ vanishes yield singular points of the branch locus $\sB = \{ F = 0\}$,
and singular points of $Z$.
\end{rem}

In the case where $Z$ is normal, then $\sL$ is determined by its restriction on the smooth locus $Z^0$ of $Z$
since $\sL = i_* (\sL|_{Z^0})$ (we are denoting by $ i : Z^0 \ra Z$ the inclusion map),
 hence we are just dealing with the Picard group $\operatorname{Pic}(Z^0)$. We want now 
to spell out in detail the  group structure of $\operatorname{Pic}(Z^0)$ in terms of the direct image rank 2 vector bundles.

\begin{prop}\label{picz}
Let $Z$ be normal,  let $\sL_1^0, \sL_2^0$ be invertible sheaves on $Z^0$, and let 
$\sN_j = (q \circ i)_* (\sL_j^0), \ j = 1,2.$ 
Then   the tensor product operation  in $\operatorname{Pic}(Z^0)$,
associating to $\sL_1^0, \sL_2^0$ their tensor product $\sL_1^0 \otimes_{\hol_{Z^0}} \sL_2^0$
gives the following pair:

1) the vector bundle $(q \circ i)_* (\sL_1^0 \otimes_{\hol_{Z^0}}  \sL_2^0 )$ equals the $\hol_Y$-double dual of $ \sN_1 \otimes_{\sR}\sN_2  $,
which is the  cokernel of the following exact sequence:
$$   \psi : (\sN_1 \otimes_{\hol_Y}\sN_2 )(-L) \ra  \sN_1 \otimes_{\hol_Y}\sN_2 \ra \sN_1 \otimes_{\sR}\sN_2 \ra 0,$$
where $$\psi = N_1  \otimes_{\hol_Y}  \operatorname{Id}_{\sN_2 } -  \operatorname{Id}_{\sN_1} \otimes_{\hol_Y} N_2 .$$ 

2) To the inverse invertible sheaf ${\sL^0}^{-1}$ corresponds the $\sR$-module associated to the pair
$(\sN^* (-L), ^tN(-2L))$, where $\sN^* : = \sH om (\sN, \hol_Y)$.

3) Two invertible sheaves on $Z^0$, $\sL_1^0, \sL_2^0$, are isomorphic if and only if there is an isomorphism
$\Psi : \sN_1 \ra \sN_2$ such that $\Psi \circ N_1 = N_2 \circ \Psi (-L)$.

4) Effective Weil divisors $D$ on $Z$ correspond to points $[\de]$ of some projective space  $\PP (H^0(\sN))$, for some isomorphism class of 
a pair $(\sN, N)$ as above. The sum $D_1 + D_2$ corresponds to the image of $\de_1 \otimes_{\sR} \de_2$ in the double dual
$(\sN_1 \otimes_{\sR}  \sN_2)^{**}$.
\end{prop}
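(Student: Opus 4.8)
The plan is to reduce everything to a statement about the rank-2 vector bundle $\sN = (q\circ i)_*\sL^0$ together with its endomorphism $N$, using the correspondence established in Lemma~\ref{free}, and then to verify the four assertions one at a time. Throughout, I would work on the smooth locus $Z^0$ and use that $(q\circ i)_*$ identifies $\Pic(Z^0)$ with isomorphism classes of pairs $(\sN, N)$ that are locally free in the sense of Lemma~\ref{free}; the point is that $\sR$-module operations on $Z^0$ translate into operations on $(\sN, N)$, and the $\hol_Y$-double dual accounts for pushing forward from $Z^0$ to $Z$ (i.e.\ applying Hartogs, since $Z$ is normal and $\sN_j$ is locally free in codimension one).

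For part (1): the $\sR$-module $\sN_1\otimes_{\sR}\sN_2$ is by definition the cokernel of $\psi = N_1\otimes\Id - \Id\otimes N_2$ acting on $(\sN_1\otimes_{\hol_Y}\sN_2)(-L)\to \sN_1\otimes_{\hol_Y}\sN_2$, because the relation $z\cdot(n_1\otimes n_2) = (Nn_1)\otimes n_2 = n_1\otimes(Nn_2)$ is exactly what must be imposed to pass from the $\hol_Y$-tensor product to the $\sR$-tensor product; here $z$ acts on $\sN_j$ through $N_j$ (after the twist by $L$). Then $(q\circ i)_*(\sL_1^0\otimes\sL_2^0)$ is the saturation, i.e.\ the $\hol_Y$-double dual of this cokernel, since the tensor product of the invertible sheaves is the reflexive hull of the naive tensor product on the normal variety $Z$. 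I would check that $\psi$ is generically injective (it is injective wherever both $N_j$ are nonzero), so the displayed three-term sequence is exact as claimed.

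For part (2): the inverse of $\sL^0$ is $\sH om_{\hol_{Z^0}}(\sL^0,\hol_{Z^0})$; its pushforward is $\sH om_{\sR}(\sN, \sR)$, which as an $\hol_Y$-module is $\sH om_{\hol_Y}(\sN,\hol_Y)(-L)=\sN^*(-L)$ (the twist by $-L$ coming from the fact that $\sR = \hol_Y\oplus z\hol_Y(-L)$ as $\hol_Y$-modules, and duality against the rank-2 bundle interacts with the grading by $z$). The $\sR$-module structure, i.e.\ the action of $z$, is the transpose $^tN$ of $N$, suitably twisted to $^tN(-2L)\colon \sN^*(-L)(-L)\to\sN^*(-L)$; one verifies $(^tN)^2 = F\cdot\Id$ from $N^2=F\cdot\Id$, and that this pair is indeed inverse to $(\sN,N)$ by computing the $\sR$-tensor product via part (1) and checking it is the trivial pair $(\sR,\, z\cdot) = (\hol_Y\oplus\hol_Y(-L),\, \text{companion matrix of }z^2-F)$.

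For part (3): an isomorphism of invertible $\hol_{Z^0}$-modules $\sL_1^0\cong\sL_2^0$ pushes forward to an $\sR$-module isomorphism $\sN_1\cong\sN_2$; an $\hol_Y$-linear isomorphism $\Psi\colon\sN_1\to\sN_2$ is $\sR$-linear precisely when it commutes with the action of $z$, which is the stated intertwining relation $\Psi\circ N_1 = N_2\circ\Psi(-L)$. Conversely such a $\Psi$ gives an $\sR$-module isomorphism, hence an isomorphism of the corresponding sheaves on $Z^0$ after taking $\widetilde{(-)}$. For part (4): an effective Weil divisor $D$ on $Z$ is the zero scheme of a section of the associated divisorial sheaf $\sL = \sL_D$; pushing forward, $H^0(Z^0,\sL^0) = H^0(Y,\sN)$, and a nonzero section is determined up to scalar, so $D\mapsto[\delta]\in\PP(H^0(\sN))$. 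That $D_1+D_2$ corresponds to the image of $\delta_1\otimes_{\sR}\delta_2$ in $(\sN_1\otimes_{\sR}\sN_2)^{**}$ is immediate from the multiplicativity of sections under tensor product together with part (1). The main obstacle I anticipate is bookkeeping the $L$-twists correctly in parts (1) and (2)—in particular getting the twist in the dual pair $(\sN^*(-L),\,{}^tN(-2L))$ right—and making sure the passage ``cokernel $\rightsquigarrow$ double dual'' is justified, i.e.\ that the naive tensor product and the reflexive hull differ only on a codimension $\geq 2$ locus (the singular points of $Z$, where $N$ vanishes), which follows from normality of $Z$ and Lemma~\ref{free}.
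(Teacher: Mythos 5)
Your proposal follows essentially the same route as the paper's proof: present $\sN_1\otimes_{\sR}\sN_2$ as the cokernel of $\psi$, pass to the $\hol_Y$-double dual using that the vanishing loci of the $N_j$ lie in codimension $\geq 2$ (normality of $Z$ plus Lemma~\ref{free}), compute $\sH om_{\sR}(\sN,\sR)$ to get the pair $(\sN^*(-L),{}^tN(-2L))$ for the inverse, and push forward isomorphisms and sections for parts (3) and (4). One small correction: $\psi$ is never injective (its kernel has rank $2$ wherever $F\neq 0$, consistent with the $2$-periodic complex noted in the paper), but no injectivity is claimed or needed -- the displayed sequence only asserts the cokernel presentation, and the relevant fact is that $\psi$ has rank $2$ wherever at least one $N_j$ is nonzero.
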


\proof
1) First of all  we have that $(q \circ i)_* (\sL_1^0 \otimes_{\hol_{Z^0}} \sL_2^0 )$ is the saturation of the sheaf equal to $\sN_1 \otimes_{\sR}\sN_2  $
on the open set $Y^0 = Y \setminus \operatorname{Sing} (\sB)$ (letting $j : Y^0 \ra Y$ be the inclusion, the saturation of $\sF$ is here for us $j_* (\sF|_{Y^0}$)).

Moreover, by   definition, $\sN_1 \otimes_{\sR}\sN_2  $ is the quotient of $ \sN_1 \otimes_{\hol_Y}\sN_2  $
by the submodule generated by the elements $ (z \cdot n_1)  \otimes_{\hol_Y} n_2 -  n_1 \otimes_{\hol_Y} ( z \cdot n_2) $,
i.e., the submodule  image of 
$$N_1  \otimes_{\hol_Y}  \operatorname{Id}_{\sN_2 } -  \operatorname{Id}_{\sN_1} \otimes_{\hol_Y} N_2 :   
\sN_1 \otimes_{\hol_Y}\sN_2 (-L) \ra  \sN_1 \otimes_{\hol_Y}\sN_2. $$ 
Since the above is  an antisymmetric map of vector bundles, its rank at one point is either $2$, or $0$: but the latter  happens, as it is easy to verify,
 exactly when both $N_1$ and $N_2$ vanish, 
i.e., in a locus contained in the singular locus of the branch locus $\sB  = \{ F=0\}$, which has codimension 2 since we assume $Z$ to be normal.

Hence we have that $ \sN_1 \otimes_{\sR}\sN_2  $ is a rank 2 bundle at $y \in Y$  if either $N_1$ or  $N_2$ do not vanish at $y$; in the contrary case, 
the sheaf $ \sN_1 \otimes_{\sR}\sN_2  $ has rank 4 at the point $y$, and one needs to take  $( \sN_1 \otimes_{\sR}\sN_2 )^{**} $,
where we set $\sF^* : = \sH om_{\hol_Y} (\sF , \hol_Y)$.

We observe moreover that we have an infinite complex

$$ \dots  \ra (\sN_1 \otimes_{\hol_Y}\sN_2 )(-2L) \ra   (\sN_1 \otimes_{\hol_Y}\sN_2 )(-L) \ra  \sN_1 \otimes_{\hol_Y}\sN_2 \ra \sN_1 \otimes_{\sR}\sN_2 \ra 0,$$
where the  maps are given by 
 $$\psi' = N_1  \otimes_{\hol_Y}  \operatorname{Id}_{\sN_2 } +  \operatorname{Id}_{\sN_1} \otimes_{\hol_Y} N_2 ,$$ 
respectively  by the above $\psi$.
The complex is exact on the right and also at the points where either $N_1$ or  $N_2$ are not vanishing.

2)  Given the module $ \sH om_{\sR} (\sN, \sR)$, its elements consist of elements  $\phi = \phi_1 + z  \phi_2$,
where $\phi_1 \in \sN^*$, and $\phi_2 : \sN \ra  \hol_Y(-L)$.
$\sR$-linearity is equivalent to $\phi (N n ) = z \phi (n)$, hence to
$$\phi_2 (N n) =  \phi_1(n) , \phi_1 (Nn) = F \phi_2(n).  $$
The first equation  implies the second, hence $\phi_2$ determines $\phi$; moreover, given any 
$\phi_2 : \sN \ra   \hol_Y(-L)$, and defining $  \phi_1(n) : = \phi_2 (N n) $, we get an $\sR$-linear homomorphism by the above formulae.

Clearly, multiplication by $z$ acts  on $\phi$ by sending $\phi = \phi_1 + z  \phi_2 \mapsto F \phi_2 + z \phi_1$,
hence it sends 
$\phi_2 :  \sN \ra   \hol_Y(-L)$ to $\phi_2 \circ N$, and it is given by $^t N (-2L) : \sN^* (-L) (-L) \ra \sN^* (-L)$.

3) Is easy to show, applying $(q \circ i)_*$ to the given isomorphism.

4) A section of $\sL$ determines, again by applying $(q \circ i)_*$, a section $\de \in H^0(\sN)$, which represents the image of $1 \in \hol_Y$
and which  in turn determines the image of an element $\alpha + z \beta \in \sR$: 
since we must  have $\alpha + z \beta \mapsto \de (\alpha) + N \de (\beta)$.

\qed

\section{Line bundles on hyperelliptic curves}

We apply the theory developed in the previous section to discuss the particular, but very interesting case, of line bundles on hyperelliptic curves.
Here $Y = \PP^1$, and $Z = C$ is a hyperelliptic curve of genus $g \geq 1$ defined by the equation 
$$  z^2 = F(x_0,x_1),$$
where $F$ is a homogeneous polynomial of degree $ 2g + 2$ without multiple roots.

The results of this section are also very relevant in order to show the complexity of the equations defining  
dihedral covers. In fact,  a $D_n$-covering of $\PP^1$, $ X \ra \PP^1$,  factors through a hyperelliptic curve $C$.
In the case where $X \ra C$ is \'etale, $X$  is determined by the choice of a line bundle $ L \in \operatorname{Pic}^0 (C)$ which is an element of $n$-torsion.
This is the reason why we dedicate special attention to the description of $n$-torsion line bundles on hyperelliptic curves.

For shorthand notation we write $\hol (d) : = \hol_{\PP^1} (d)$
and we observe that every locally free sheaf on $\PP^1$ is a direct sum of some invertible sheaves $\hol (d_j)$. 

Our {first} goal here is to parametrize the Picard group of $C$.
The first preliminary remark is that it suffices to parametrize the subsets $\operatorname{Pic}^0(C), \operatorname{Pic}^{-1} ( C )$, 
since for any line bundle $\sL$ we can choose an integer $d$ such that
$$ \deg (\sL \otimes_{\hol_C} q^* (\hol(d)) \in \{0, -1\}.$$

Let us then consider  a line bundle $\sL $ of degree zero or $-1$.
In the  case $\sL = \hol_C$ we know that the corresponding pair is the
vector bundle 
$$ \hol \oplus  z \hol (-g-1),  N (\alpha + z \beta) = F \beta + z \alpha,$$
and the corresponding matrix is
$$ N = 
\left(
\begin{matrix}
0 & F \\
& \\
1 & 0
\end{matrix}
\right) \
$$
whose determinant yields a trivial factorization  $F = 1 \cdot F$.

If instead we assume that $\sL$ is nontrivial, i.e. it has no sections, we can write
$$\sN : = q_* (\sL) = \hol (-a) \oplus \hol (-b), \   0 < a \leq b,  \ a + b = g+1 -d, \ d = \deg (\sL).$$

The integer $a$ is  equal to
$$ \min \{ m | H^0 (\sL \otimes q^* \Oh (m)) \neq 0 \},$$ 
and it determines a Zariski locally closed stratification of the Picard group.  We fix $a$ and $ d \in \{0,1\}$ for the time being,
and observe that then the vector bundle $\sN$ is uniquely determined, whereas the matrix $N$ (determining the $\sR$-structure)
has the form
$$ N = 
\left(
\begin{matrix}
P & f \\
& \\
q & -P
\end{matrix}
\right) \ , 
- \det (N)  =  (P^2 + qf) = F .
$$
Here,   $$ \deg (P) = g+1, \deg (f) = g + 1 - a + b, \deg (q) = g + 1 + a - b .$$
Recall in fact  that, in terms of fibre variables $(u,v)$, we have $$ z u = P u + q v , z v = f u - P v.$$

We see the matrix $N$ as providing a factorization 
$$ F - P^2  = q f. $$ 

It follows that $N$ is fully determined by the degree $g+1$ polynomial $P$ and by a partial factorization
of the polynomial $ F - P^2 =   \Pi_1^{2g+2} l_i (x_0, x_1)$ (here the $l_i$ are linear forms),
as the product $  f q$ of two polynomials of respective degrees $\deg (f) = g + 1 - a + b \geq g+1, \deg (q) = g + 1 + a - b \leq g+1$.
Observe that  the choice for the polynomial $q$ (hence of $f$)  is only unique up to a constant $\la \in \CC^*$, once the partial  factorization is fixed.

Since for  $P$ general the linear forms $l_i$ are distinct, we have in this case exactly
$ \frac{(2g+2)!}{(g + 1 - a + b)! (g + 1 + a - b)!}$ such possible factorizations.

Denote by $\sV(a,b)$ the variety of such matrices:  $\sV(a,b) = \{(P,f,q) | P^2 + qf  = F  \}$. The preceding discussion shows that the variety
$\sV(a,b)$ parametrizing such 
 matrices $N$ is  a $\CC^*$ bundle over a finite covering of 
degree $ \frac{(2g+2)!}{(g + 1 - a + b)! (g + 1 + a - b)!}$ of the affine space $\CC^{g+2}$ parametrizing  our polynomials $P$.
 Hence $\sV(a,b)$ is an affine variety  of dimension $g+3$. However, in order to get isomorphism classes of line bundles (elements of the Picard group)
 we must divide by the adjoint action of the group $\sG$ of the automorphisms of the vector bundle 
 $\sN : = q_* (\sL) = \hol (-a) \oplus \hol (-b)$ which have determinant $1$.
 
 For $a=b$ we get $\sG = SL (2, \CC)$, whereas for $a < b$ we get a triangular group of matrices
 $$ B = 
\left(
\begin{matrix}
\la & \beta \\
& \\
0 & \la^{-1}
\end{matrix}
\right) \ , 
$$
where $\beta$ is any homogeneous polynomial of degree $b-a$.

We also observe that the stabilizer of a matrix $N$ corresponds to an isomorphism of $\sL$,
hence to a scalar $\mu \in \CC^*$ whose square equals $\det (B) = 1$, hence $\mu = \pm 1$.

We do not investigate here the GIT  stability of the orbits, but just observe that the
$\sV(a,b)$ yields an open set of dimension $g$ in the case where $ a=b$,
else it gives a stratum of dimension $ g + 1 - (b-a)$.

Recalling  that  $a + b = g+1 -d $, for  $d=0$ the open set corresponds to:  the case $a=b= \frac{g+1}{2}$ 
for $g$ odd, and the case $a =  \frac{g}{2}, b =  \frac{g}{2} + 1$ for $g$ even; 
similarly for $ d = -1$.

Next we investigate the explicit description of tensor powers of an invertible sheaf $\sL$ on the hyperelliptic curve $C$,
with two motivations: the first one  is to try to get useful  results  towards the description of dihedral coverings of the projective line,
the second in order to describe torsion line bundles on hyperelliptic curves.

\begin{prop}\label{power}
There are exact sequences
$$ 0 \ra \sK_2 : = \hol (-a - b -(g+1)) \ra S^2 (q_* \sL) \ra q_* (\sL ^{2} ) \ra 0,$$
$$ 0 \ra \sK_3  := \sK_2 \otimes q_* \sL   \ra S^3 (q_* \sL) \ra q_* (\sL ^{3} ) \ra 0, $$ 
$$ 0 \ra \sK_n  := \sK_2 \otimes S^{n-2} (q_* \sL)    \ra S^n (q_* \sL) \ra q_* (\sL ^{n} ) \ra 0. $$ 
\end{prop}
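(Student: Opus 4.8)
The starting point is the structure identified in Section 3: for a flat double cover $q\colon Z \to Y$ with $Z=C$, a line bundle $\sL$ on $C$ of degree $d$ corresponds to the pair $(\sN, N)$ where $\sN = q_*\sL = \hol(-a)\oplus\hol(-b)$ and $N\colon \sN(-L)\to\sN$ satisfies $N^2 = F\cdot\operatorname{Id}$, with $L$ here of degree $g+1$. By Proposition \ref{picz}(1) applied with $\sL_1=\sL_2=\sL$, the bundle $q_*(\sL^2)^{**}$ is the cokernel of $\psi = N\otimes\operatorname{Id} - \operatorname{Id}\otimes N$ acting on $\sN\otimes_{\hol_Y}\sN$, twisted by $(-L)$ on the source. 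The key observation is that $\psi$ is antisymmetric, hence kills the symmetric part and restricts to an isomorphism on the antisymmetric part $\Lambda^2\sN \cong \hol(-a-b)$ away from the finitely many points where $N$ vanishes; so the quotient $\sN\otimes_{\sR}\sN$, modulo torsion, is precisely $S^2(\sN) = S^2(q_*\sL)$ modulo the image of the antisymmetric piece. First I would make this precise: split $\sN\otimes_{\hol_Y}\sN = S^2\sN \oplus \Lambda^2\sN$, write $\psi$ in block form, and observe that the composition $\Lambda^2\sN(-L)\hookrightarrow\sN\otimes\sN(-L)\xrightarrow{\psi}\sN\otimes\sN\twoheadrightarrow S^2\sN$ is a map $\hol(-a-b-(g+1))\to S^2(q_*\sL)$ whose cokernel (after passing to the reflexive hull) is $q_*(\sL^2)$. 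This gives the first sequence, with $\sK_2 = \hol(-a-b-(g+1))$ as claimed; injectivity of $\sK_2 \to S^2(q_*\sL)$ is clear since both are torsion free on the curve $Y=\PP^1$ and the map is nonzero.

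For the general case, the plan is to iterate. The module $q_*(\sL^n)^{**}$ is the reflexive hull of $\sN^{\otimes_{\sR}n}$, and working over the generic point of $Y$ (where $\sR$ is a quadratic field extension and $\sN$ is a free rank-one $\sR$-module), the $\sR$-module $\sN^{\otimes_{\sR}n}$ is free of rank one; over $\hol_Y$ it is a rank-two bundle. The symmetric power $S^n(q_*\sL) = S^n\sN$ is a rank-$(n+1)$ bundle, and I would exhibit the natural surjection $S^n\sN \to q_*(\sL^n)$ as the composition $S^n\sN \hookrightarrow \sN^{\otimes n}_{\hol_Y} \to \sN^{\otimes_{\sR}n} \to (\sN^{\otimes_{\sR}n})^{**}$. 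The kernel $\sK_n$ is then a bundle of rank $n-1$. To identify it, I would use the short exact sequence for $\sL^{n-1}$ inductively together with the multiplication map: tensoring the sequence $0\to\sK_{n-1}\to S^{n-1}\sN\to q_*(\sL^{n-1})\to 0$ with $\sN = q_*\sL$ over $\hol_Y$ and mapping to the $n$-th stage, one gets a map $S^{n-1}\sN\otimes\sN\to S^n\sN$ (the symmetrization, surjective with kernel $\Lambda^2\sN\otimes S^{n-2}\sN$) compatible with the surjections to $q_*(\sL^{n-1})\otimes\sN \twoheadrightarrow q_*(\sL^n)$. Chasing the resulting diagram and using that the relation defining $\sN^{\otimes_\sR\bullet}$ from $\sN^{\otimes\bullet}_{\hol_Y}$ is always the single antisymmetric relation "$z$ acts the same on either factor", one sees the kernel $\sK_n$ is generated by $\sK_2\otimes S^{n-2}\sN$, i.e. the image of $\Lambda^2\sN\otimes S^{n-2}\sN(-L) = \sK_2\otimes S^{n-2}(q_*\sL)$. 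The case $n=3$ is just $\sK_3 = \sK_2\otimes q_*\sL$, the first instance of the pattern.

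The main obstacle is verifying that the kernel is exactly $\sK_2\otimes S^{n-2}(q_*\sL)$ and not something smaller — equivalently, that the map $\sK_2\otimes S^{n-2}\sN \to S^n\sN \to q_*(\sL^n)$ is zero and that what remains is injective into $q_*(\sL^n)$ after reflexive hull. The cleanest way is a rank count combined with a generic-point computation: over the generic point $\eta$ of $Y$, $q_*(\sL^n)_\eta$ has $\sR_\eta$-rank one hence $\CC(Y)$-dimension $2$, so the kernel of $S^n\sN_\eta \to q_*(\sL^n)_\eta$ has dimension $n+1-2 = n-1$, matching $\operatorname{rk}(\sK_2\otimes S^{n-2}\sN) = n-1$; then one only needs the inclusion $\sK_2\otimes S^{n-2}\sN \subseteq \sK_n$, which follows because multiplying the degree-two relation $z\cdot z = F$ (the content of $N^2=F\cdot\operatorname{Id}$, responsible for $\sK_2$) by any degree-$(n-2)$ monomial in the generators lands in the relations module. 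Saturation/reflexivity then forces equality, since $\sK_n$ is a subbundle of the bundle $S^n\sN$ on the smooth curve $\PP^1$ and agrees with $\sK_2\otimes S^{n-2}\sN$ on a dense open set, hence everywhere after taking the saturation — and both are already saturated subsheaves of a locally free sheaf on a smooth curve. I would present the $n=2$ and $n=3$ cases explicitly and then state the inductive step, leaving the routine diagram chase to the reader.
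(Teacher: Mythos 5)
Your $n=2$ step is in substance the paper's own (the relation you produce from $\Lambda^2\sN(-L)$ is exactly the quadric $\Xi=fu^2-2Puv-qv^2$, and the paper also pins down $\sK_2$ by a rank--and--Chern-class count), but the ``key observation'' you base it on is misstated: $\psi=N\ot\operatorname{Id}-\operatorname{Id}\ot N$ does not kill the symmetric part and does not preserve $\Lambda^2\sN$; conjugating $\psi$ by the swap gives $-\psi$, so $\psi$ interchanges the summands $S^2\sN$ and $\Lambda^2\sN$. What you actually need is that the component $S^2\sN(-L)\to\Lambda^2\sN$ of $\psi$ is surjective on every fibre (true here because $P,f,q$ never vanish simultaneously, $\sL$ being invertible on the smooth curve), which is what makes $\coker\psi\cong S^2\sN/\psi(\Lambda^2\sN(-L))$. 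If $\psi$ literally annihilated the symmetric part the cokernel would have rank $3$, so the reason you give cannot be right even though the conclusion is; the block-form computation you defer would expose this, so I regard it as a fixable misstatement.

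The genuine gap is in the general-$n$ step. Having shown that the image of $\sK_2\ot S^{n-2}(q_*\sL)$ lies in the kernel of $S^n(q_*\sL)\to q_*(\sL^n)$ and that the ranks agree, you conclude equality by asserting that ``both are already saturated subsheaves''. Saturation of the kernel is clear (its quotient injects into the locally free $q_*(\sL^n)$), but saturation of the image of a bundle map on a curve is not automatic and is precisely the point at issue: with rank equality alone the quotient kernel/image could be a nonzero torsion sheaf, and your inductive diagram chase hides the same question (it needs the kernel of $q_*(\sL^{n-1})\ot\sN\to q_*(\sL^n)$, a statement of the same kind). The gap closes easily once noticed: fibrewise the map $\sK_2\ot S^{n-2}(q_*\sL)\to S^n(q_*\sL)$ is multiplication by the nonzero binary quadric $\Xi(y)$ from forms of degree $n-2$ to forms of degree $n$, hence injective on every fibre, so its cokernel is locally free and the image is a subbundle; alternatively a degree count, $\deg\bigl(\sK_2\ot S^{n-2}(q_*\sL)\bigr)=-(n-1)(g+1)-\tfrac{n(n-1)}{2}(a+b)=\deg S^n(q_*\sL)-\deg q_*(\sL^n)$, forces the torsion quotient to vanish. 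The paper avoids the issue altogether by a different, more geometric route: $q_{\sL}$ identifies $C$ with the divisor $\{\Xi=0\}$ in $\PP'=\operatorname{Proj}(q_*\sL)$, with $\Xi$ a section of $\hol_{\PP'}(2)\ot\pi^*\sK_2^{-1}$, and pushing forward $0\to\hol_{\PP'}(n-2)\ot\pi^*\sK_2\to\hol_{\PP'}(n)\to\hol_C(n)\to0$ yields all three sequences at once. So your strategy is salvageable, but as written the proof of the general case is incomplete.
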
 
\proof
The local sections of $\sN = q_* \sL$ can be written as pairs $(u,v)$.
Hence $q_* (\sL ^{2} )$ is generated by $ u^2, uv, v^2$, subject to the relation
$$u (fu - P v) =  u (z v) = z (uv) = v (z u) = v (Pu + q v) \Leftrightarrow $$
$$ \Leftrightarrow  \Xi : = f u^2 - 2 P uv - q v^2 = 0.$$ 
That this is the only relation follows since the kernel $\sK_2$ has rank $1$ and first  Chern class equal to $-a-b-(g+1)$.

Similarly, $q_* (\sL ^{3} )$ is generated by the cubic monomials $u^3, u^2 v , u v^2, v^3$, 
and we can simply multiply the relation $\Xi$ by $u$, respectively $v$.

In general, we simply observe that there is a natural morphism 
$$ q_{\sL} : C \ra \operatorname{Proj} (q_* \sL)$$ given by evaluation, and whose image is 
the relative quadric  $\Ga : = \{ (u,v) | \Xi (u,v) = 0\}$.

Recall that, since $\sL$ is invertible, the polynomials
$f,P,q$ cannot vanish simultaneously, in particular $\Ga : = \{ \Xi (u,v) = 0\}$ is irreducible.
Moreover the branch locus of $\Ga \ra \PP^1$ is $\{ (x_0, x_1) | fq + P^2 = F= 0\}$,
therefore $\Ga$ 
is  isomorphic to $C$.

 Hence $C$ is the hypersurface in $\PP' : = \operatorname{Proj} (q_* \sL)$ defined by $\Xi = 0$,
 where $\Xi$  is a section of $ \hol_{\PP'} (2) \otimes q^* (\sK_2)^{-1}$;
 hence we obtain the general exact sequence via the pushforward of
 $$ 0 \ra  \hol_{\PP'} (n) ( -C) =  \hol_{\PP'} (n-2) \otimes q^* (\sK_2)  \ra \hol_{\PP'} (n) \ra \hol_C (n) \ra 0 .$$ 

\qed

\begin{cor}
A  line bundle $\sL$ of $n$-torsion on the hyperelliptic curve $C$ corresponds to a pair 
$ (\sN , N)$, $$\sN= \hol (-a) \oplus \hol (-b), \   0 < a \leq b,  \ a + b = g+1, $$
$$ N = 
\left(
\begin{matrix}
P & f \\
& \\
q & -P
\end{matrix}
\right) \ , 
\det (N)  = - (P^2 + qf) = - F .
$$
where $\deg (P) = g+1, \deg (f) = 2 b, \deg (q) = 2 a  ,$
such that the following  linear map 
$$     H^0 (S^n (\hol(a) \oplus  \hol(b)) (-2))  \ra   H^0 (S^{n-2} (\hol(a) \oplus  \hol(b))(2g)) ,$$
equal to the dual of $\sK_2 \otimes S^{n-2} (q_* \sL)    \ra S^n (q_* \sL) $ twisted by $(-2)$, is not surjective.
\end{cor}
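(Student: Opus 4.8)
The plan is to translate ``$\sL$ is $n$-torsion'' into a cohomological vanishing on $\PP^1$ and then read that vanishing off the exact sequence of Proposition \ref{power}. Since $a+b=g+1$, the line bundle $\sL$ has degree $0$, hence so does $\sL^n$; and a degree-zero line bundle on a smooth projective curve is trivial precisely when it has a nonzero global section (the divisor of such a section is effective of total degree $0$, hence the zero divisor). So $\sL$ is $n$-torsion if and only if $h^0(C,\sL^n)>0$, and since $q$ is finite this equals $h^0\!\left(\PP^1,q_*(\sL^n)\right)$. Thus everything reduces to detecting when $H^0\!\left(\PP^1,q_*(\sL^n)\right)=0$ from the sequence
$$0\to \sK_n\xrightarrow{\,j\,} S^n(q_*\sL)\to q_*(\sL^n)\to 0,\qquad \sK_n=\sK_2\otimes S^{n-2}(q_*\sL),\ \sK_2=\hol(-2g-2),$$
of Proposition \ref{power}.

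First I would note that $H^0\!\left(\PP^1,S^n(q_*\sL)\right)=0$: indeed $q_*\sL=\hol(-a)\oplus\hol(-b)$ with $a,b\ge 1$, so $S^n(q_*\sL)$ is a direct sum of line bundles $\hol(-ia-jb)$ with $i+j=n$, all of strictly negative degree. Taking the long exact cohomology sequence, $H^0\!\left(q_*(\sL^n)\right)$ then sits inside $H^1(\sK_n)$ as the kernel of the map $H^1(j)\colon H^1(\sK_n)\to H^1\!\left(S^n(q_*\sL)\right)$ induced by $j$; in particular $H^0\!\left(q_*(\sL^n)\right)=0$ if and only if $H^1(j)$ is injective. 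The second step is to identify $H^1(j)$ with the transpose of the map in the statement. By Serre duality on $\PP^1$ (with dualizing sheaf $\hol(-2)$), for a vector bundle $\sF$ one has a natural isomorphism $H^1(\sF)^*\cong H^0\!\left(\sF^*(-2)\right)$; applied to $j$, this says that $H^1(j)$ is the transpose of $H^0$ of $j^*(-2)\colon S^n(q_*\sL)^*(-2)\to \sK_n^*(-2)$. Using $\sK_2=\hol(-2g-2)$ one computes $S^n(q_*\sL)^*(-2)=S^n(\hol(a)\oplus\hol(b))(-2)$ and $\sK_n^*(-2)=S^{n-2}(\hol(a)\oplus\hol(b))(2g)$, so $H^0(j^*(-2))$ is exactly the linear map appearing in the Corollary (the dual of $\sK_2\otimes S^{n-2}(q_*\sL)\to S^n(q_*\sL)$, twisted by $(-2)$).

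Combining the two steps: the linear map of the statement is surjective if and only if its transpose $H^1(j)$ is injective, if and only if $H^0\!\left(q_*(\sL^n)\right)=0$, if and only if $\sL^n$ has no section, if and only if $\sL$ is not $n$-torsion; equivalently, it fails to be surjective exactly when $\sL$ is $n$-torsion, which is the assertion. The one point that requires care is the compatibility of the two descriptions of $H^1(j)$ — the one coming from the long exact sequence of Proposition \ref{power} and the one coming from Serre duality — but this is just the functoriality of Serre duality on $\PP^1$ applied to the morphism $j$, together with the vanishing $H^0(S^n(q_*\sL))=0$, which is what makes $H^0(q_*(\sL^n))$ literally equal to $\ker H^1(j)$ rather than merely a subquotient of $H^1(\sK_n)$.
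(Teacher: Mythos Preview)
Your argument is correct and follows essentially the same route as the paper: both proofs reduce the $n$-torsion condition to $H^0(q_*(\sL^n))\neq 0$, use the vanishing $H^0(S^n(q_*\sL))=0$ (from $a,b>0$) to identify this with the non-injectivity of $H^1(\sK_n)\to H^1(S^n(q_*\sL))$, and then invoke Serre duality on $\PP^1$ to rewrite this as the non-surjectivity of the map in the statement. Your write-up is slightly more explicit in justifying the degree-zero/triviality equivalence and in computing $\sK_n^*(-2)$, but the strategy is identical.
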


\proof
Let $\sL$ be a degree zero line bundle on $C$, so that we have $ a + b= g+1$.
The condition that $ \sL ^{n} $ is trivial is clearly equivalent to the condition $ H^0 (q_* (\sL ^{n} )) \neq 0.$
In view of the exact cohomology sequence  (here $ H^0 (S^n (q_* \sL) ) = H^0 (S^n ( \sN) )  = 0$ since $a, b > 0$):
$$ 0 \ra H^0 (\sK_n   ) = 0    \ra H^0 (S^n (q_* \sL) ) = 0 \ra H^0 (q_* (\sL ^{n} ))   \ra  $$
$$ \ra H^1 (\sK_n   )     \ra H^1 (S^n (q_* \sL) )  \ra H^1 (q_* (\sL ^{n} ))   \ra  0, $$ 
the condition $ H^0 (q_* (\sL ^{n} )) \neq 0$ is equivalent to the non injectivity of $H^1 (\sK_n  )     \ra H^1 (S^n (q_* \sL) )$,
equivalently, to the non surjectivity of the homomorphism of Serre dual vector spaces:
$$     \coker (H^0 (S^n ( \sN^* )(-2) )   \ra     H^0 ( S^{n-2} ( \sN^*)  (2g)) ) \neq 0 \Leftrightarrow$$
$$    \Leftrightarrow  \coker ( H^0 (S^n (\hol(a) \oplus  \hol(b)) (-2))  \ra   H^0 (S^{n-2} (\hol(a) \oplus  \hol(b))(2g)) ) \neq 0.$$
 
For $n=2$ this means that, denoting  by $\sA[m] = H^0 (\hol(m))$ the space of homogeneous polynomials of degree $m$, the linear map 
$$ \sA[2a-2] \oplus \sA[a+b -2] \oplus \sA[2b-2] \ra \sA [ 2a + 2b-2]$$
given by the matrix $( f, -2 P, q)$ is not surjective (equivalently, the linear map given by the matrix $( f,  P, q)$ is not surjective).

Writing in non homogeneous coordinates $$ f = \sum_i^{2b} f_i x^i, P = \sum_i^{a+b} P_i x^i, q= \sum_i^{2a} q_i x^i,$$
this means that the matrix
$$ A (f,P,q) = 
\left(
\begin{matrix}
f_0 & f_1 & \dots & f_{2b-1}& f_{2b}& 0 & 0 & \dots & 0\\
& \\
0 &  f_0 & f_1 & \dots & f_{2b-1}& f_{2b}& 0 & \dots & 0\\
& \\
\dots &  \dots & \dots & \dots & \dots & \dots & \dots & \dots & \dots\\
& \\
0 & 0 &0 & 0 &  f_0 & f_1 & \dots & f_{2b-1}& f_{2b}\\
& \\
P_0 & P_1 & \dots & P_{a+b }&0 & 0 & 0 & \dots & 0\\
& \\
0 &  P_0 & P_1 & \dots & P_{a+b }&0 &  0 & \dots & 0\\
& \\
\dots &  \dots & \dots & \dots & \dots & \dots & \dots & \dots & \dots\\
& \\
0 & 0 &0 & 0 & 0 &  P_0 & P_1 & \dots & P_{a+b }\\
& \\
q_0 & q_1 & \dots & q_{2a}& 0 & 0 & 0 & \dots & 0\\
& \\
0 &  q_0 & q_1 & \dots & q_{2a}&  0 & 0 & \dots & 0\\
& \\
\dots &  \dots & \dots & \dots & \dots & \dots & \dots & \dots & \dots\\
& \\
0 & 0 &0 & 0 & 0 &  q_0 & q_1 & \dots & q_{2a }\\

\end{matrix}
\right) \ , 
$$
does not have maximal rank.

\qed

\begin{rem}
The reader may notice the similarity of the matrix $A(f,P,q)$ with the matrix giving the resultant of two homogenous polynomials
in two variables. Moreover, notice that $A(f,P,q)$ is a $ (3a + 3b -3) \times ( 2a + 2b -1)$ matrix, hence the condition that
its rank be at most $ 2a+ 2b -2$ amounts to a codimension $ g = a+b -1$ condition, which is the expected codimension
of the set of  n-torsion points in $\operatorname{Pic}^0 (C)$.
\end{rem}

\subsection{Powers of divisorial  sheaves on double covers}
We would now like to show how the result of proposition \ref{power} generalizes to any flat double cover $ q : Z \ra Y$.
Let $\sL$ be a divisorial  sheaf on $Z$, and $(\sN , N)$ the associated pair, where $\sN = q_* (\sL)$.
There is a natural map
$ q_{\sL} : Z \dasharrow \operatorname{Proj}(\sN)$ given by evaluation, and which is a morphism on the smooth locus of $Z$.

Set $\PP' : = \operatorname{Proj}(\sN)$, and let $\Ga$ be the image of $ q_{\sL} $. Clearly $Z$ is birational to $\Ga$, which is a Gorenstein variety,
since it is a divisor in $\PP'$, and $ q_{\sL} $ is bijective outside of the inverse image of the branch locus $\sB_q$.
Moreover $q$ factors through $ q_{\sL} $ and the natural projection $\pi : \PP' \ra Y$. 

At the points where $\sL$ is invertible, then $\sL $ is isomorphic with $\hol_Z$, with an isomorphism compatible with the projection $q$, hence
we conclude that $ q_{\sL} $ is an embedding on the smooth locus of $Z$.

At a point $P$ where $\sL$ is not invertible, we use the local description of the pair $(\sN , N)$ as $(\hol_Y^2, N)$,
where $N$ is the matrix sending two local generators $x, y$  to $a x + c y ,$ respectively to $bx -a y$.
Since $\sL$ is not invertible, by lemma \ref{free} we get that $a, b,c$ vanish at $P$.
We use now the relations
$$ x z = ax + cy \Leftrightarrow x (z-a) = cy , \ y z  = b x - a y \Leftrightarrow y (z+a) = b x$$
to obtain 
$$ ( x : y ) = ( c : z-a) = (z+a : b) .$$  

These formulae (observe that $( c : z-a) = (z+a : b) $ since $z^2 - a^2 = bc$) show that, at the points $P$ where $a,b,c$ (hence also $z$) vanish, the rational map $ q_{\sL} $ blows up the point $q^{-1}(P)$
to the whole fibre $\PP^1$ lying over $P$.
 Hence $\Ga$ is a small resolution of $Z$,
and we have that the inverse of $ q_{\sL} $ is obtained blowing down these $\PP^1$' s  lying over such points $P$.

Since we have an isomorphism $Z^0 \cong \Ga^0$, we see that the line bundle $\hol_{\Ga}(1)$ restricts to $\sL$ on $Z^0$,
hence the sections of $\sL^n$ on $Z^0$ correspond to sections of  $\hol_{\Ga}(n)$ on $ \Ga^0$.

 Notice that $\operatorname{Pic} (\PP')$ is generated by $\operatorname{Pic}(Y)$ and by $ \hol_{\PP'} (1)$,
hence there is an invertible sheaf $ \sK_2 $ on $Y$ such that $\Ga$ is the zero set of  a section of $ \hol_{\PP'} (1) \otimes \pi ^* (\sK_2)^{-1}$.

Consider now the exact sequence
 $$ 0 \ra  \hol_{\PP'} (n) ( - \Ga) =  \hol_{\PP'} (n-2) \otimes \pi ^* (\sK_2)  \ra \hol_{\PP'} (n) \ra \hol_{\Ga} (n) \ra 0 .$$ 
 Taking the direct image, and observing that $\pi_* \hol_{\Ga} (n) = q_* (\sL^n)$, we obtain the following
 
 \begin{prop}\label{kn}
 For each divisorial sheaf $\sL$ on $Z$
 there is an exact sequence
 $$ 0 \ra \sK_n  := \sK_2 \otimes S^{n-2} (q_* \sL)    \ra S^n (q_* \sL) \ra q_* (\sL ^{n} ) \ra 0. $$ 
 \end{prop}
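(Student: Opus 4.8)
The plan is to obtain the sequence by pushing forward, along the projection $\pi\colon\PP':=\operatorname{Proj}(q_*\sL)\to Y$, the structure sequence of the divisor $\Ga\subset\PP'$ produced in the discussion preceding the statement. Recall from there that $\sN:=q_*\sL$ is locally free of rank $2$ (flatness of $\sL$ over $Y$), so $\pi$ is a $\PP^1$-bundle; that the evaluation map $q_\sL$ restricts to an isomorphism $Z^0\cong\Ga^0$ carrying $\sL|_{Z^0}$ onto $\hol_{\Ga}(1)|_{\Ga^0}$; and that the inverse $\rho\colon\Ga\to Z$ is a small modification, an isomorphism away from the preimage of $\operatorname{Sing}\sB$, a closed set of codimension $\ge 2$.

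First I would make explicit that $\Ga$ is cut out in $\PP'$ by a section of $\hol_{\PP'}(2)\otimes\pi^*\sK_2^{-1}$ for a suitable invertible sheaf $\sK_2$ on $Y$. Working locally and using the trace-zero matrix of Lemma \ref{free} for the $\sR$-module structure, the identity $u\cdot(zv)=v\cdot(zu)$ forces a single relative quadratic relation on the fibre coordinates $(u,v)$ — exactly the equation $\Xi=0$ of Proposition \ref{power} — so $\Ga$ has relative degree $2$ over $Y$, as it must since $\Ga\to Y$ factors the degree-$2$ cover $q$. Twisting the ideal-sheaf sequence of $\Ga$ by $\hol_{\PP'}(n)$ then yields the short exact sequence of sheaves on $\PP'$
$$0\to\hol_{\PP'}(n-2)\otimes\pi^*\sK_2\to\hol_{\PP'}(n)\to\hol_{\Ga}(n)\to 0.$$

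Next I would apply $\pi_*$. On the $\PP^1$-bundle $\PP'$ one has $\pi_*\hol_{\PP'}(k)=S^k(q_*\sL)$ for $k\ge 0$ and $R^1\pi_*\hol_{\PP'}(k)=0$ for $k\ge-1$; since $n\ge 2$, the exponent $n-2$ is $\ge 0$, so by the projection formula $\pi_*(\hol_{\PP'}(n-2)\otimes\pi^*\sK_2)=S^{n-2}(q_*\sL)\otimes\sK_2=\sK_n$ and the corresponding $R^1\pi_*$ vanishes. Hence the push-forward remains short exact,
$$0\to\sK_n\to S^n(q_*\sL)\to\pi_*\hol_{\Ga}(n)\to 0,$$
and, since $\pi|_{\Ga}=q\circ\rho$ with $\rho$ small, $\pi_*\hol_{\Ga}(n)=q_*(\rho_*\hol_{\Ga}(n))=q_*(\sL^n)$, because $\rho_*\hol_{\Ga}(n)$ is the reflexive hull of $(\sL^{\otimes n})|_{Z^0}$, i.e.\ the divisorial power $\sL^n$. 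This is the asserted sequence, and for $n=2,3$ it specialises to the first two sequences of Proposition \ref{power}.

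The step I expect to require the most care is the last identification $\pi_*\hol_{\Ga}(n)=q_*(\sL^n)$: one must check that $\rho\colon\Ga\to Z$ genuinely is an isomorphism in codimension one, so that $\rho_*\hol_{\Ga}(n)$ is exactly the divisorial power $\sL^n$ and not some other coherent extension of $\sL^{\otimes n}|_{Z^0}$; right-exactness of the push-forward, by contrast, is the easy part, guaranteed by the $R^1\pi_*$-vanishing on the projective bundle. The remaining ingredients — the structure sequence of a divisor in a projective bundle and the cohomology of $\hol_{\PP'}(k)$ along $\pi$ — are exactly those already used in the proof of Proposition \ref{power}.
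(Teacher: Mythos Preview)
Your proposal is correct and follows exactly the paper's own argument: twist the ideal sequence of $\Ga\subset\PP'$ by $\hol_{\PP'}(n)$, push forward along $\pi$, and identify $\pi_*\hol_\Ga(n)=q_*(\sL^n)$ via the small birational map $\rho\colon\Ga\to Z$. If anything, you are more explicit than the paper about the two points it leaves implicit --- the $R^1\pi_*$-vanishing needed for left exactness and the reason why $\rho_*\hol_\Ga(n)$ coincides with the divisorial power $\sL^n$ --- so your write-up could serve as a fleshed-out version of the same proof.
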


\section{Dihedral field extensions and generalities on dihedral covers}
In this section we describe dihedral field extensions 
$\CC( Y ) \subset \CC ( X )$   in the case where 
$Y$ is factorial and $X$ is normal, thus we obtain 
a birational classification of  $D_n$-covers. 
Recall that for any 
$G$-cover $\pi \colon X \ra Y$ of normal varieties 
the field extension $\CC ( Y ) \subset \CC ( X )$ is Galois 
with group $G$ (a $G$-extension); on the other hand
any such  field extension  determines $\pi$
as the normalization of $Y$ in $\CC ( X )$.

In the second part of this section we 
determine the geometric building data that make the
above normalization process explicit.
This is important, for instance  to calculate 
invariants of $X$, to determine the direct images of basic sheaves
on $X$ (e.g. $\pi_* \Oh_X, \pi_* \Om_X, \pi_*\Theta_X$, etc.)
and to provide explicitly families of such covers.

\subsection{Dihedral field extensions}\label{Dfe}

Let us fix the following presentation for the dihedral groups:
$$
D_n = \langle \s , \tau \, | \, \s^n = \tau^2 = (\s\tau)^2=1 \rangle \, .
$$
The   split exact sequence
$$
0 \ra  \langle \s \rangle = \ZZ/n\ZZ \ra D_n \ra \ZZ/2\ZZ \ra 0 \, 
$$
gives a factorization of the  $D_n$-cover $\pi \colon X \ra Y$  
as the composition of two intermediate cyclic covers: $\pi = q\circ p$, where
$p\colon X \ra Z$ is a $\ZZ/n\ZZ$-cover,
$q\colon Z \ra Y$ is a $\ZZ/2\ZZ$-cover, $Z:= X/\langle \s \rangle$.
If $X$ is normal and irreducible, then also $Z$ and $Y$ are so, and
we have the following chain of field extensions:
$$
\CC (Y ) \subset \CC (Z ) \subset \CC (X ) \, ,
$$ 
where the field of rational functions on $Z$ is 
the  field $\CC ( X )^{\langle \s \rangle}$ of invariant functions under $\s$.

Let us recall, following \cite{cyclic}, the structure of cyclic extensions $\CC ( W ) \subset \CC ( V )$,
where $W, V$ are normal varieties. Here the Galois group $G$ is cyclic of order $m$,
 $G \cong \ZZ/m\ZZ$; later we use 
this description with $m=2$ and with $m=n$ to study  $D_n$-field extensions. 
Let $\s \in G$ be a generator and let $\z \in \CC$ be a primitive $m$-th root of the unity.
Then there exists ${v}\in \CC ( V )$ and ${f}\in \CC ( W )$, such that 
\begin{equation}\label{cyclicfe}
\begin{cases}
\CC ( V ) &= \CC ( W ) ({v}) \, ,  \quad {v}^m = {f} \, , \\
\s ( {v} ) &= \z {v} \, . 
\end{cases}
\end{equation}
Concretely, ${v}$ can be chosen to be any non-zero element of the form
$$
{v} = \sum_{i=0}^{m-1} \z^{m-i}\s^i (\tilde{v}) \, , \quad \tilde{v} \in \CC ( V ) \, .
$$
Furthermore, it is possible to choose $v$ and $f$,
such that \eqref{cyclicfe} holds and 
\begin{equation}\label{cyclicfe1}
f = \prod_{i=1}^{m-1} (\d_i)^i \, ,  
\end{equation} 
where $\d_1 , \ldots , \d_{m-1}$ are regular sections of invertible sheaves 
on $W\setminus \sing (W)$.
To see this, let $\hat{v}$ and $\hat{f}$ satisfying \eqref{cyclicfe}, and
consider the Weil divisor associated to $\hat{f}$,
$( \hat{f} ) = \sum_{U} v_U ( \hat{f} ) U$, where $U\subset W$ varies among the prime 
 divisors of $W$ and $v_U$ is the valuation of $U$. 
 On the non-singular locus $W\setminus \sing (W)$, $U\cap (W\setminus \sing (W))$ is an
 effective  Cartier divisor,
hence $U\cap (W\setminus \sing (W)) = \{ \d_U =0\}$, where $\d_U$ is a 
regular section of an invertible sheaf on $W\setminus \sing (W)$, hence
$$
\hat{f} = \prod_U \d_U^{v_U ( \hat{f} )} \, .
$$ 
Let now $v_U ( \hat{f} ) = mq_U ( \hat{f} ) + r_U ( \hat{f} )$, where  $q_U ( \hat{f} ) , 
r_U ( \hat{f} ) \in \ZZ$,
$0\leq r_U ( \hat{f} ) < m$, 
and define 
$$
\d_i = \prod_{r_U(\hat{f})=i} \d_U  \, , \quad i=1 , \ldots, m-1 \, ;
$$ 
then the claim holds true with $v:= \hat{v}\prod_U(\d_U)^{-q_U ( \hat{f})}$.

\begin{rem}\label{dicyclic}
If $W$ is  factorial, the group of Weil divisors coincides with that 
of Cartier divisors, hence $\d_i$ corresponds to a Weil divisor $D_i$
which is reduced but not necessarily irreducible. Geometrically $D_i$ is the divisorial part of the branch
locus $\sB =\sum_{i=1}^{m-1}D_i$ where the local monodromy is $\s^i$ and 
$v$ is a rational section of a line bundle $L$ on $W$ which  satisfies the linear equation
\begin{equation}\label{dicycliceq}
mL \equiv \sum_{i=1}^{m-1} iD_i \, .
\end{equation}
Conversely, one can construct in a natural way a $\ZZ/m\ZZ$-cover
starting from a line bundle $L$ and effective reduced divisors without common components  $D_1, \ldots , D_{m-1}$,
such that \eqref{dicycliceq} holds (\cite{pardini}, \cite{cyclic}).
\end{rem}

The following proposition describes dihedral field extensions (see also \cite{Tok}).
\begin{prop}\label{dfe}
Let $\CC ( Y ) \subset \CC ( X )$ be a $D_n$-extension. Then 
there exist $a, F \in \CC ( Y )$ and $x\in \CC ( X )$, such that 
$$
\CC ( X ) = \CC ( Y ) (x) \, , \qquad x^{2n}-2ax^n + F^n = 0 \, ,
$$
 with $D_n$-action  given as follows: $\s (x) = \z x$, $\tau ( x ) = F/x$, where $\z \in \CC$ is a primitive 
$n$-th root of $1$.

Conversely, given $a, F \in \CC ( Y )$, such that $x^{2n} - 2ax^n + F^n \in \CC ( Y ) [x]$
is irreducible, then $\frac{\CC ( Y ) [ x ]}{( x^{2n} - 2ax^n + F^n )}$ is a $D_n$-field extension 
of $\CC ( Y )$ with $D_n$-action given as before. Hence the normalization 
of $Y$ in $\frac{\CC ( Y ) [ x ]}{( x^{2n} - 2ax^n + F^n )}$ is a $D_n$-covering of $Y$.
\end{prop}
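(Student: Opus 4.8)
The plan is to use the factorisation $\pi = q \circ p$ with $Z = X/\langle\s\rangle$ and to feed the degree-$n$ cyclic extension $\CC(Z)\subset\CC(X)$ into the description of cyclic extensions \eqref{cyclicfe}. First I would fix a primitive $n$-th root of unity $\z$ and take $v\in\CC(X)$, $g\in\CC(Z)$ with $\CC(X)=\CC(Z)(v)$, $v^{n}=g$ and $\s(v)=\z v$. Since $\{1,v,\dots,v^{n-1}\}$ is a $\CC(Z)$-basis of $\CC(X)$ on which $\s$ acts with the pairwise distinct eigenvalues $1,\z,\dots,\z^{n-1}$, the $\z^{-1}$-eigenspace of $\s$ on $\CC(X)$ is exactly $\CC(Z)\cdot v^{n-1}=\CC(Z)\cdot(g/v)$. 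Using the freedom $v\mapsto cv$ (with $c\in\CC(Z)^{\ast}$), which replaces $g$ by $c^{n}g$, I would moreover arrange that $g\notin\CC(Y)$: writing $\CC(Z)=\CC(Y)(w)$ with $w^{2}\in\CC(Y)$, the coefficient of $w$ in $(w+r)^{n}$ is a polynomial in $r$ over $\CC(Y)$ with leading term $nr^{n-1}\neq 0$, so $c=w+r$ has $c^{n}\notin\CC(Y)$ for all but finitely many $r\in\CC$. Then $\CC(Z)=\CC(Y)(g)=\CC(Y)(v^{n})\subseteq\CC(Y)(v)$, hence $\CC(X)=\CC(Z)(v)=\CC(Y)(v)$.

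Next I would bring in $\tau$, using that $(\s\tau)^{2}=1$ in $D_{n}$, i.e.\ $\tau\s\tau^{-1}=\s^{-1}$. This gives $\s(\tau(v))=\tau(\s^{-1}(v))=\z^{-1}\tau(v)$, so $\tau(v)$ lies in the $\z^{-1}$-eigenspace and can be written $\tau(v)=h/v$ for a unique $h\in\CC(Z)$. Applying $\tau$ once more and using $\tau^{2}=1$ gives $v=\tau(h)\,v/h$, hence $\tau(h)=h$; since $\langle\tau\rangle=\operatorname{Gal}(\CC(Z)/\CC(Y))$, this forces $h\in\CC(Y)$, and I set $F:=h$. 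Applying $\tau$ to $v^{n}=g$ yields $(F/v)^{n}=\tau(g)$, i.e.\ $g\,\tau(g)=F^{n}$; putting $2a:=g+\tau(g)\in\CC(Y)$ (here $2$ is invertible) shows that $g$ is a root of $T^{2}-2aT+F^{n}\in\CC(Y)[T]$. Therefore $x:=v$ satisfies $x^{2n}-2ax^{n}+F^{n}=(x^{n})^{2}-2a\,x^{n}+F^{n}=g^{2}-2ag+F^{n}=0$, while $\s(x)=\z x$ and $\tau(x)=h/v=F/x$ hold by construction; combined with $\CC(X)=\CC(Y)(x)$ from the first paragraph, this proves the first assertion.

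For the converse, set $P(x):=x^{2n}-2ax^{n}+F^{n}$, assumed irreducible over $\CC(Y)$ (note $F\neq 0$, otherwise $P=x^{n}(x^{n}-2a)$ is reducible), and let $K:=\CC(Y)[x]/(P)=\CC(Y)(x)$, so that $[K:\CC(Y)]=2n$ and $x\neq 0$ in $K$. Since $\z^{n}=1$, $\z x$ is again a root of $P$, so $x\mapsto\z x$ extends to a $\CC(Y)$-automorphism $\s$ of $K$ of order $n$; and since $x^{2n}P(F/x)=F^{n}P(x)=0$, also $F/x$ is a root of $P$, so $x\mapsto F/x$ extends to a $\CC(Y)$-automorphism $\tau$ of $K$, for which one checks directly $\tau^{2}=1$ and $(\s\tau)^{2}=1$. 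Hence $\s,\tau$ generate a subgroup $G\subseteq\Aut(K/\CC(Y))$; from $\tau\s\tau^{-1}=\s^{-1}$ and $\tau\notin\langle\s\rangle$ one gets $|G|=2n$, so $|G|=[K:\CC(Y)]$ forces $G=\Aut(K/\CC(Y))$ and $K/\CC(Y)$ Galois, while the presentation of $D_{n}$ gives a surjection $D_{n}\onto G$ which is an isomorphism by order count. Finally, the normalisation $X$ of $Y$ in $K$ inherits the $D_{n}$-action, with $X/D_{n}$ equal to the normalisation of $Y$ in $K^{D_{n}}=\CC(Y)$, i.e.\ to $Y$ itself; hence $\pi\colon X\to Y$ is a $D_{n}$-cover.

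The step that requires the most care is the first one: the structure theorem only produces $v$ up to a factor in $\CC(Z)^{\ast}$, and one must choose it so that simultaneously $g=v^{n}\notin\CC(Y)$ (which yields $\CC(X)=\CC(Y)(x)$) and $\tau(v)$ acquires the clean form $h/v$ with $h\in\CC(Y)$; it is this bookkeeping within the $\s$-eigenspace decomposition of $\CC(X)$ over $\CC(Z)$ that forces the minimal polynomial into the normalised shape $x^{2n}-2ax^{n}+F^{n}$ and the involution into $x\mapsto F/x$. Everything else is a routine verification, valid over any algebraically closed ground field of characteristic not dividing $2n$.
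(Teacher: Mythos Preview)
Your proof is correct and follows the same overall route as the paper: factor through $Z=X/\langle\s\rangle$, invoke the cyclic-extension description to get $v^n=g\in\CC(Z)$ with $\s(v)=\z v$, and then determine the $\tau$-action. The execution differs in two minor but noteworthy ways. First, to obtain $\tau(x)=F/x$ with $F\in\CC(Y)$, the paper simply observes that $x\tau(x)$ is $D_n$-invariant, whereas you locate $\tau(v)$ in the $\z^{-1}$-eigenspace and then use $\tau^2=1$; both arguments are short and equivalent. Second, to ensure $\CC(X)=\CC(Y)(x)$, the paper writes $g=a+zb$ and normalizes to $b=1$, while you instead arrange $g\notin\CC(Y)$ by multiplying $v$ by a suitable $c\in\CC(Z)^\ast$. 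Your normalization is in fact slightly more careful: the paper's ``without loss of generality $b=1$'' tacitly assumes $b\neq 0$, a case your argument covers explicitly. Your treatment of the converse (checking $\tau\notin\langle\s\rangle$ so that $|G|=2n$) is also more detailed than the paper's, which just notes that all conjugates of $x$ lie in the extension.
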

\begin{proof}
Consider the quotient $Z:= X/\langle \s \rangle$ of $X$ by the 
 cyclic subgroup $\langle \s \rangle \cong \ZZ/n\ZZ$ and 
let $q\colon Z \ra Y$ be the induced double cover. 
From the previous description of  cyclic field extensions, we have that
\begin{equation*}
\begin{cases}
\CC ( Z ) &= \CC ( Y ) (z) \, , \quad z^2 = f \in \CC ( Y ) \\
\bar{\tau} (z) &= -z
\end{cases} 
\end{equation*}
where $\bar{\tau} \in D_n/ \langle \s \rangle$ is the class of $\tau$.
Since $Y$ is factorial we can assume that $f$ is a regular section
of an invertible sheaf on $Y$. \\
Now consider the extension 
$\CC ( Z ) \subset \CC ( X )$. Let $x\in \CC ( X )$,  $g\in \CC ( Z )$, such that 
$$
\begin{cases}
\CC ( X ) &= \CC ( Z ) ( x ) \, , \quad x^n = g  \, , \\ 
\s ( x ) &= \z x \, , 
\end{cases}
$$
where $\z \in \CC$ is a primitive $n$-th root of $1$, and notice that 
$g \in \CC ( Z )$ can be written as
$$
g=  a + zb \, , \qquad \mbox{with} \quad a, b \in \CC ( Y ) \, .
$$ 
Without loss of generality $b=1$ in the previous formula,
otherwise we replace $f$ with $b^2f$ and $z$ with $bz$.
Moreover, $x\tau ( x )$ is invariant under the action of $D_n$, therefore 
$$
x\tau ( x ) = F \in \CC ( Y ) \, , \quad \mbox{equivalently} \quad \tau ( x ) = F/x \, .
$$
Finally, $F^n = x^n \tau (x^n) = g\bar{\tau}( g ) = (a+z)(a-z) = a^2 - f$, hence
$$
f=  a^2 - F^n  \, .
$$
To conclude, we observe that $z\in \CC ( Y ) (x)$ because $x^n = a+ z$,
so $\CC ( X ) = \CC ( Y ) (x)$ and by   construction it follows that
$$
x^{2n} - 2ax^n + F^n = 0 \, . 
$$

For  the last statement, notice that the field extension
$$
\CC ( Y ) \subset \frac{\CC ( Y )[ x ]}{(x^{2n} - 2ax^n + F^n)}
$$
is Galois with group $D_n$, indeed  the conjugates 
of $x$, namely $\z^i x$ and $ \z^iF/x$, for $i=0, \ldots , n-1$, belong to 
$\frac{\CC ( Y )[ x ]}{(x^{2n} - 2ax^n + F^n)}$.
\end{proof}

\subsection{Structure of $D_n$-covers}
Let  $\pi \colon X \ra Y$ be a flat $D_n$-cover with $Y$ smooth.
Then $\pi_* \Oh_X$
is a locally free sheaf of $\Oh_Y$-modules, i.e. a vector bundle on $Y$. 
Recall that the sheaf $\pi_* \Oh_X$ carries  a natural structure of 
$\Oh_Y$-algebras, which is given by the  product of regular functions on $X$, 
\begin{equation}\label{m}
m \colon \pi_* \Oh_X \otimes_{\Oh_Y} \pi_* \Oh_X \ra \pi_* \Oh_X \, ;
\end{equation}
the $D_n$-action on $X$ gives to $\pi_* \Oh_X$ the structure of a $D_n$-sheaf (see below).
On the other hand, the variety $X$  is completely determined by $\pi_* \Oh_X$ and $m$, since $X= \spec  (\pi_* \Oh_X)$
(\cite[II, 5.17]{Hart}); the $D_n$-action on $X$ is given by
the structure of  $D_n$-sheaf on $\pi_* \Oh_X$.

Recall that, for any finite group $G$ the \textit{regular representation} of $G$, $\bC[G]$, is the vector space 
with a basis $\{ e_g\}_{g\in G}$ indexed by the elements of $G$; for any $h\in G$, an  
endomorphism of $\bC[G]$ is defined by $e_g\mapsto e_{hg}$, for all $g\in G$.
In a similar way one defines a sheaf of $\Oh_Y$-algebras $\Oh_Y[G]$, for any variety $Y$. 
A sheaf  $\mathcal{F}$  of $\Oh_Y$-modules is a {\bf $G$-sheaf}, if it has a structure of 
sheaf of $\Oh_Y[G]$-modules. If moreover $\mathcal{F}$ is a vector bundle, then  its fibers 
carry a linear $G$-action and so we can see $\mathcal{F}$ as a family of representations of $G$ parametrized by  $Y$.

For any representation $\rho\colon G\ra {\rm GL}(V)$ of $G$, its {\it canonical decomposition} (see \cite[\S 2.6]{Serre}) is
the unique decomposition 
$$
V=V_1\op \ldots \op V_N \, 
$$
defined as follows. Let $W_1 , \ldots , W_N$ be the different  irreducible representations  of $G$.
Then each $V_i$ is the direct sum of all the  irreducible representations of $G$ in $V$ that are  isomorphic to $W_i$. 
If $\chi_1 , \ldots , \chi_N$ are the characters of $W_1 , \ldots , W_N$,   and 
$n_i = \dim W_i$, then
\begin{equation}\label{candec}
p_i = \frac{n_i}{|G|} \sum_{g\in G}\overline{\chi_i (g)}\rho (g) \in {\rm End}(V)
\end{equation}
is the projection of $V$ onto $V_i$, for any $i=1 , \ldots , N$, where $\overline{\chi_i (g)}$ is the complex-conjugate of 
$\chi_i (g)$.

Let now $\mathcal{F}$ be a locally free $G$-sheaf on $Y$ 
with action $\rho \colon G \ra {\rm GL}_{\Oh_Y}(\mathcal{F})$. 
Via  \eqref{candec} we define an endomorphism 
$p_i \in {\rm End}_{\Oh_Y}(\mathcal{F})$, for any $i=1, \ldots , N$. Setting $\mathcal{F}_i:= {\rm Im}(p_i)$, we have the 
following decomposition:
$$
\mathcal{F} = \mathcal{F}_1 \oplus \ldots \oplus  \mathcal{F}_N \, .
$$  
For any $i=1, \ldots , N$, $\mathcal{F}_i$ is   the \textit{eigensheaf} of $\mathcal{F}$
corresponding to the (irreducible) representation with character $\chi_i$.  
Notice that $\mathcal{F}_i$ is a vector sub-bundle of  $\mathcal{F}$, for all $i$.
In particular, when $\mathcal{F}= \pi_* \Oh_X$,   $\pi \colon X \to Y$ is a flat $G$-cover,
we have that  $\pi_* \Oh_X$ is a locally free sheaf of $\Oh_Y[G]$-modules of rank one,
i.e. the fibres of  $\pi_* \Oh_X$ are isomorphic to $\bC [G]$ as $G$-representations. 
Indeed, the previous procedure gives the decomposition  
$\pi_* \Oh_X = \left( \pi_* \Oh_X\right)_1 \oplus \ldots \oplus  \left( \pi_* \Oh_X\right)_N$, with 
$\left( \pi_* \Oh_X\right)_i \subset  \pi_* \Oh_X$ a sub-bundle, for any $i$. 
By construction, the fibres of $\left( \pi_* \Oh_X\right)_i$ are  isomorphic to each other as $G$-representations.
So, it is enough to consider the restriction of $\pi_* \Oh_X$ on the complement $Y\setminus \sB$
of the branch divisor $\sB$, where the assertion follows easily.  

In order to describe the sheaves $\left( \pi_* \Oh_X\right)_i$,  $i=1, \ldots , N$, when
$\pi \colon X \to Y$ is a flat $D_n$-cover,  let us briefly 
recall the representation theory of the dihedral groups. They depend on the parity of $n$.

\medskip

\noindent \underline{$n$ odd.} There are two irreducible representations of degree $1$
with characters $\chi_1$ and $\chi_2$,
\begin{eqnarray*}
\begin{array}{ l | c | r }
     & \s^k & \s^k\tau \\ \hline
    \chi_1 & 1 & 1 \\ \hline
    \chi_2 & 1 & -1 \\
\end{array} \, ,
\end{eqnarray*}
and $\frac{n-1}{2}$ irreducible representations of degree $2$,
\begin{eqnarray}\label{roh}
\rho^\ell (\s^k) = \left( \begin{matrix} \z^{k\ell} & 0 \\ 0 & \z^{-k\ell} \end{matrix} \right) \, ,
\qquad 
\rho^\ell (\s^k \tau) = \left( \begin{matrix} 0 & \z^{k\ell} \\  \z^{-k\ell} & 0 \end{matrix} \right) \, ,
\end{eqnarray}
where $\z \in \CC^*$ is a primitive $n$-th root of $1$, $1\leq \ell \leq \frac{n-1}{2}$, $k=0,\ldots , n-1$.

\medskip

\noindent \underline{$n$ even.} In this case there are $4$ representations
of degree $1$ with characters $\chi_1, \chi_2, \chi_3$  and $\chi_4$,
\begin{eqnarray*}
\begin{array}{ l | c | r }
     & \s^k & \s^k\tau \\ \hline
    \chi_1 & 1 & 1 \\ \hline
    \chi_2 & 1 & -1 \\ \hline
    \chi_3 & (-1)^k & (-1)^k \\ \hline
    \chi_4 & (-1)^k & (-1)^{k+1}
\end{array} \, ,
\end{eqnarray*}
and, for any $1\leq \ell \leq \frac{n}{2} - 1$, the irreducible representation $\rho^\ell$
defined by \eqref{roh}. 

\medskip

As a consequence   we have that 
$$
\pi_* \Oh_X = \Oh_Y \op \sL \op_{\ell=1}^{\frac{n-1}{2}}  \left( \pi_* \Oh_X \right)_\ell \, , \quad   \quad \mbox{if $n$ is odd} \, ,
$$
 and
$$
\pi_* \Oh_X = \Oh_Y \op \sL \op \MMM \op
\mathcal{N} \op_{\ell=1}^{\frac{n}{2} - 1}  \left( \pi_* \Oh_X \right)_\ell \, , \quad \quad \mbox{if $n$ is even} \, ,
$$
where $\sL$, $\MMM$, $\mathcal{N}$ are the line bundles
corresponding  to the $1$-dimensional representations with characters  respectively $ \chi_2, \chi_3, \chi_4$.

Notice that the sections of $\sL$ are invariant under the action of the 
subgroup  $\langle \s \rangle \cong \ZZ/n\ZZ \leq D_n$, hence they are regular functions 
on $Z$.
Moreover we have:
\begin{equation}\label{p*oz}
q_* \Oh_Z = \Oh_Y \op \sL \, , \quad \sL^{\otimes 2} \cong \Oh_Y (- \sB_q) \, , 
\end{equation}
where $\sB_q \subset Y$ is the branch divisor of $q$.

If $n$ is even, the line bundles $\MMM$ and $\mathcal{N}$ have a similar interpretation, they arise from
the $(\bZ/2\bZ \times \bZ/2\bZ)$-cover $X/\langle \s^2 , \tau \rangle \to Y$.

In order to get further information on the rank $4$ vector bundles $\left( \pi_* \Oh_X \right)_\ell$,
we assume that $X$ is normal and use the factorization $\pi = q\circ p$ and the theory of cyclic covers.
Let us denote with  $Z^0$  the smooth locus of $Z$, $X^0 = p^{-1}(Z^0)$ and $p^0 \colon X^0 \to Z^0$ be the restriction 
of $p$. 
From the structure theorem for 
cyclic covers (\cite{pardini}, \cite{BC08},  \cite{cyclic}) 
it follows that $p^0$ is determined by divisor classes 
$L_1 , \ldots , L_{n-1}$ and reduced effective divisors $D_1^0 , \ldots , D_{n-1}^0 \subset Z^0$
without common components, such that 
\begin{equation}\label{cyclic}
L_i + L_j\equiv L_{\overline{i+j}}  - \sum_{k=1}^{n-1} \e^k_{i,j} D^0_k \, ,
\end{equation}
where $\overline{i+j} \in \{ 0, \ldots , n-1\}$, $\overline{i+j} = i+j \, ({\rm mod} \, n)$, $L_0:= \Oh_{Z^0}$.

Let us briefly recall   the geometric interpretation of the previous data. For any $i=0, \ldots , n-1$,
the line bundle $\Oh(L_i)$ is the subsheaf of $(p^0)_* \Oh_{X^0}$ consisting of the regular functions $f$
on $X^0$ such that $\s^*f = \exp \left( \frac{2\pi \sqrt{-1}}{n}i \right) f$. For any $k=1, \ldots , n-1$,
the divisor  $D_k^0\subset Z^0$ is the  union of the components $\D$ of the branch divisor $\sB_{p^0}$ of $p^0$
such that, for any component $T\subset (p^0)^{-1}(\D)$, the stabilizer of the generic point of $T$
is the cyclic subgroup of $\langle \s \rangle$ generated by $\s^k$ and there is a uniformizing parameter
$x \in \Oh_{X^0,T}$ such that $(\s^k)^* x = \exp \left( \frac{2\pi \sqrt{-1}}{|\langle \s^k \rangle|}\right)x$,
where $|\langle \s^k \rangle|$ is the order of $\s^k$. For every $i,j = 0 , \ldots , n-1$, $\e^k_{i,j}$
is defined as follows: let $\imath_i(k), \imath_j(k) \in \{ 0, \ldots , |\langle \s^k \rangle| -1 \}$
be such that 
\begin{eqnarray*}
\exp \left( \frac{2\pi \sqrt{-1}}{n} ik\right) &=& \exp \left( \frac{2\pi \sqrt{-1}}{|\langle \s^k \rangle|}\right)^{\imath_i(k)} \, , \\
\exp \left( \frac{2\pi \sqrt{-1}}{n} jk\right) &=& \exp \left( \frac{2\pi \sqrt{-1}}{|\langle \s^k \rangle|}\right)^{\imath_j(k)}
\end{eqnarray*}
respectively, then 
\begin{equation}\label{eijk}
\e^k_{i,j} =
\begin{cases}
1 & , \quad {\rm if} \quad \imath_i(k) + \imath_j(k) \geq |\langle \s^k \rangle| \\
0 & , \quad {\rm otherwise.}
\end{cases}
\end{equation}

\begin{prop}\label{Fi}
Let $Y$ be a smooth variety and let $\pi \colon X \to Y$ be a flat $D_n$-cover
with $X$ normal.
Let $p\colon X \to Z$ and $q \colon Z \to Y$
be the intermediate cyclic covers defined previously. Then the following holds true.
\begin{itemize}
\item[(i)]
$p_* \Oh_X =  \op_{i=0}^{n-1} \sF_i$, where $\sF_0  = \Oh_Z$ and, 
for any $i=1, \ldots , n-1$, $\sF_i$ is a divisorial sheaf on $Z$. 
For any $i,j =1, \ldots , n-1$ the product \eqref{m} induces  an isomorphism as follows:
\begin{equation}\label{?}
\left( \sF_i \ot_{\Oh_Z} \sF_j \right)^{**} \cong \left( \sF_{\overline{i+j}} \ot_{\Oh_Z} 
\Oh_Z \left( - \sum_{k=1}^{n-1} \e^k_{i,j} D_k \right)  \right)^{**} \, ,
\end{equation}
where $\overline{i+j} \in \{ 0, \ldots , n-1\}$, $\overline{i+j} = i+j \, ({\rm mod} \, n)$, 
  for any $k=1, \ldots , n-1$,  $D_k = \overline{D_k^0}$ is the closure of the divisor 
$D_k^0$ in \eqref{cyclic}, and $()^{**}$ is the double dual in the category of $\Oh_Z$-modules. 
\item[(ii)]
For any $i=1, \ldots , n-1$,  $U_i := q_* ( \sF_i)$ 
is a vector bundle of rank $2$ on $Y$, in particular $\sF_i$ is flat over $Y$. Furthermore,  
$$
\left( \pi_* \Oh_X \right)_\ell =  U_\ell \op U_{n-\ell} \, ,
$$
where $\ell = 1, \ldots , \frac{n-1}{2}$, if $n$ is odd, and $\ell = 1, \ldots , \frac{n}{2} -1$ 
 if $n$ is even; in the case where $n$ is even, $U_{\frac{n}{2}}= \mathcal{M} \op \mathcal{N}$.
\end{itemize}
\end{prop}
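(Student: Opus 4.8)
The plan is to exploit the two intermediate cyclic covers $p\colon X\to Z$ and $q\colon Z\to Y$ and to combine the structure theory of cyclic covers (on $Z^0$) with the description of divisorial sheaves on the flat double cover $q$ developed in Sections~3--4. For part (i), I would start from the eigensheaf decomposition of $p^0_*\Oh_{X^0}$ coming from the $\langle\s\rangle\cong\ZZ/n\ZZ$-action: this is exactly the data $L_1,\dots,L_{n-1}$, $D_1^0,\dots,D_{n-1}^0$ with relation \eqref{cyclic}, so on $Z^0$ one has $p^0_*\Oh_{X^0}=\op_{i=0}^{n-1}\Oh_{Z^0}(L_i)$ with $\sF_i^0:=\Oh_{Z^0}(L_i)$, and the multiplication map $m$ restricted to the $i$-th and $j$-th summands lands in the $\overline{i+j}$-th summand, realizing the isomorphism $\sF_i^0\ot\sF_j^0\cong\sF_{\overline{i+j}}^0(-\sum_k\e^k_{i,j}D_k^0)$ directly from \eqref{cyclic}. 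Then I define $\sF_i:=\iota_*(\sF_i^0)$ where $\iota\colon Z^0\into Z$ is the inclusion; since $Z$ is normal and $\operatorname{codim}_Z(Z\setminus Z^0)\geq 2$, each $\sF_i$ is a divisorial (rank $1$ reflexive) sheaf on $Z$, and $p_*\Oh_X=\op_i\sF_i$ follows because $\Oh_X$ is reflexive and $p$ is finite, so $p_*\Oh_X=\iota_*(p^0_*\Oh_{X^0})$. The isomorphism \eqref{?} is then obtained by taking double duals of the $Z^0$-isomorphism and using that reflexivization commutes with $\iota_*$ in this codimension-$\geq 2$ situation; the divisors $D_k=\overline{D_k^0}$ are the closures.

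For part (ii), the point is that each $\sF_i$ is flat over $Y$, equivalently $q_*\sF_i$ is locally free of rank $2$. I would argue as follows: $q_*\Oh_Z=\Oh_Y\op\sL$ is locally free of rank $2$ (by \eqref{p*oz}), and $\pi_*\Oh_X=q_*(p_*\Oh_X)=\op_{i=0}^{n-1}q_*\sF_i$ is locally free of rank $2n$ by flatness of $\pi$ (Proposition~\ref{fcm}, since $X$ is normal hence — being a hypersurface-type cover, Gorenstein — Cohen--Macaulay). Each summand $q_*\sF_i$ is coherent and torsion-free on the smooth $Y$; over $Y\setminus\sB$ it is locally free of rank $2$ (the covers are étale there), so it has generic rank $2$. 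A direct sum of torsion-free sheaves being locally free forces each summand to be locally free, so $q_*\sF_i=:U_i$ is a rank-$2$ vector bundle, and by the local criterion of flatness $\sF_i$ is flat over $Y$. Finally, matching the $\langle\s\rangle$-eigenspace decomposition of $p_*\Oh_X$ with the $D_n$-isotypical decomposition of $\pi_*\Oh_X$: the involution $\tau$ (or rather $\s^k\tau$) interchanges the $\s$-eigensheaf of character $\z^{i k}$ with that of character $\z^{-ik}$, so $U_i$ and $U_{n-i}$ are swapped, and together they span the isotypical component $\left(\pi_*\Oh_X\right)_\ell$ attached to the $2$-dimensional representation $\rho^\ell$ (with $\ell\equiv\pm i$), which gives $\left(\pi_*\Oh_X\right)_\ell=U_\ell\op U_{n-\ell}$; when $n$ is even and $i=n/2$ the two characters $\z^{\pm i k}=(-1)^k$ coincide, the $2$-dimensional representation degenerates, and $U_{n/2}$ decomposes as the sum of the two relevant $1$-dimensional eigensheaves, namely $\sM\op\sN$.

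The main obstacle I anticipate is the bookkeeping in passing between $Z^0$ and $Z$: one must be careful that forming eigensheaves, tensor products, and double duals all commute with $\iota_*$, and that the divisors $D_k$ extend correctly so that \eqref{?} holds on all of $Z$ and not merely on $Z^0$ — this uses normality of $Z$ in an essential way (Hartogs / the codimension-$2$ argument). A secondary subtlety is the rank-$2$ local freeness of $U_i$: one cannot simply invoke flatness of $p$ (which may fail, as $Z$ need not be smooth), so the argument must go through flatness of the composite $\pi$ and the fact that a direct summand of a locally free sheaf is locally free. The degenerate case $n$ even, $i=n/2$, also needs the separate identification $U_{n/2}=\sM\op\sN$, which follows by comparing with the $(\ZZ/2\ZZ)^2$-cover $X/\langle\s^2,\tau\rangle\to Y$ already mentioned in the text.
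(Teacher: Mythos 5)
Your proposal is correct and follows essentially the same route as the paper's proof: push forward the cyclic-cover eigensheaf decomposition from $Z^0$ to $Z$ using normality of $Z$ (and of $X$) together with reflexivity to get $p_*\Oh_X=\op_i\sF_i$ and \eqref{?}, then deduce local freeness of each $U_i=q_*\sF_i$ from its being a direct summand of the locally free sheaf $\pi_*\Oh_X$, and finally match the $\s$-eigensheaves with the $D_n$-isotypic pieces, with $\tau^*$ splitting $U_{n/2}$ as $\sM\op\sN$ when $n$ is even. One small remark: flatness of $\pi$ is a hypothesis of the proposition, so your parenthetical deriving Cohen--Macaulayness of $X$ from normality (which is false in general) is unnecessary and should be dropped.
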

\begin{proof}
Consider the following Cartesian diagram:
\begin{equation}\label{piq}
\begin{CD}
X^0 @>{\imath}>> X  \\
@V{p^0}VV @VVpV\\
Z^0 @>{\jmath}>> Z 
\end{CD}
\end{equation} 
where $Z^0$ is the smooth locus of $Z$, $\jmath \colon Z^0 \to Z$ is the inclusion, $X^0=p^{-1}(Z^0)$,
$\imath \colon X^0 \to X$ is the inclusion, and $p^0 = p_{|X^0}$.
Notice that under our hypotheses $Z$ is normal. 

Since $\Oh_X= \imath_* \Oh_{X^0}$ and $\Oh_Z = \jmath_* \Oh_{Z^0}$ (see, e.g. \cite{ReidC3f}),
\begin{eqnarray}\label{q*ox}
p_* \Oh_X &=& p_* \imath_* \Oh_{X^0} = (p\circ \imath)_*\Oh_{X^0} = \nonumber\\
&=& \jmath_* p^0_* \Oh_{X^0} = \jmath_* \left( \Oh_{Z^0} \op_{i=1}^{n-1} \Oh_{Z^0}(L_i) \right)  =\\
&=& \Oh_{Z}  \op_{i=1}^{n-1}  \jmath_* \Oh_{Z^0}(L_i) \, . \nonumber
\end{eqnarray}
Let us define $\mathcal{F}_i := \jmath_* \Oh_{Z^0}(L_i)$, for any $i=1, \ldots , n-1$.
The product \eqref{m} gives morphisms $m_{ij} \colon \sF_i \ot \sF_j \to \sF_{\overline{i+j}}$.
Since $\sF_{\overline{i+j}}$ is reflexive, $m_{ij}$ is determined by its restriction 
on the smooth locus $Z^0$. On $Z^0$, by \eqref{cyclic}, $m_{ij}$ gives an isomorphism 
$\Oh_{Z^0}(L_i + L_j) \cong \Oh_{Z^0}(L_{\overline{i+j}} - \sum_{k=1}^{n-1}D_k^0)$.
This implies \eqref{?}.

To prove (ii), apply $q_*$ to  \eqref{q*ox} and use the following equalities:
$\pi = q\circ p$,  $q_* \Oh_Z = \Oh_Y \op \mathcal{L}$. Notice that,  $q_* \mathcal{F}_i$
is locally free for any $i=0, \ldots , n-1$,  since $\pi_* \Oh_X$ is locally free 
and $\pi_* \Oh_X = \op_{i=0}^{n-1} q_* \sF_i$. The equality $\left( \pi_* \Oh_X \right)_\ell = U_\ell \op U_{n-\ell}$
follows directly from the definition of $\left( \pi_* \Oh_X \right)_\ell
\subset \pi_* \Oh_X$ as the eigensheaf corresponding to the representation $\rho^\ell$.
Finally, in the case where $n$ is even, $\tau^*$ acts on   $U_{\frac{n}{2}}$; 
$\sM$ is the $\tau^*$-invariant subsheaf, while $\sN$ is the  $\tau^*$-anti-invariant one.
\end{proof}

Notice that \eqref{?} implies the following relation between $\sF_1$
and the Weil divisors $D_1, \ldots , D_{n-1}$,
$$
\left( \sF_1^{\ot n } \right)^{**} \cong \Oh_Z (-\sum_{k=1}^{n-1} k D_k) \, ,
$$
which is the analogous 
of Remark \ref{dicyclic} and equation  \eqref{dicycliceq} when the base of the cover is normal.

For later use we observe that the branch divisor $\sB_{p^0}=\sum_{k=1}^{n-1} D_k^0$ of $p^0 \colon X^0 \to Z^0$
is invariant under the involution $\overline{\tau} \colon Z^0 \to Z^0$ induced by $\tau$,
that is $\overline{\tau}^* \sB_{p^0} = \sB_{p^0}$, since $\s \tau = \tau \s^{-1}$.
So, there exists an effective Cartier divisor $\D_{p^0} \subset Y^0$, such that
$(q^0)^* \D_{p^0} = \sB_{p^0}$, where  $q^0 := q_{|Z^0}$. Let us define 
\begin{equation}\label{Bq}
\D_p := \overline{\D_{p^0}} \in {\rm Div} ( Y ) \, .
\end{equation}

\begin{rem} 
Notice that, for any $i=1, \ldots , n-1$, $\overline{\tau}(D_i) = D_{n-i}$, so
$\overline{\tau}(D_i)$ does not have any common component with $D_i$
for any $i<n/2$, while, if $n$ is even, $\overline{\tau}(D_{\frac{n}{2}}) = D_{\frac{n}{2}}$.
\end{rem}

\medskip

In the remaining of the section we determine building data for $D_n$-covers of $Y$.
To this aim we first  derive some properties of the vector bundles $U_1 , \ldots , U_{n-1}$
(\textbf{1.}--\textbf{3.} below).

\medskip 
 
\noindent \textbf{1.} For any $i=1, \ldots , n-1$, the involution $\tau \colon X \to X$ induces an isomorphism $\tau^* \colon U_i \to U_{n-i}$,
 such that $\tau^* \circ \tau^* $ is the identity.\\
\begin{proof} For any open  $V\subset Y$, $U_i (V):=\mathcal{F}_i (q^{-1}(V))$ consists of the
regular functions $f\in \Oh_X(\pi^{-1}(V))$ such that $\s^* f = \exp (\frac{2\pi \sqrt{-1}}{n}i) f$.
From the relation $\s \tau = \tau \s^{-1}$ it follows that $f\mapsto \tau^* f$ gives a morphism 
$\tau^* \colon U_i (V) \to U_{n-i} (V)$ of $\Oh_Y$-modules.
Since $\tau^2 =1$, $\tau^* \circ \tau^* = {\rm Id}$ and $\tau^*$ is an isomorphism.  \end{proof}

\noindent \textbf{2.} For any $i=1, \ldots , n-1$, $U_i$ has a structure of $\sR$-module
given by the restriction $m_i \colon \mathcal{L} \otimes U_i \to U_i$ of the product \eqref{m},
where $\sR = \Oh_Y \op \sL = q_* \Oh_Z$ (see Section 3.). In particular $m_i^2 = \d_q \cdot \operatorname{Id}_{U_i}$,
where $\d_q \in H^0 (Y, \Oh_Y (\sB_q))$ is such that $\sB_q = \{ \d_q =0 \}$
($\d_q =F$ of Section 3).
Furthermore, the isomorphism $\tau^* \colon U_i \to U_{n-i}$ allows us to identify 
$m_{n-i}$ with $-m_i$, as it follows from the commutativity of the following diagram
and the fact that $m_i \circ (\tau^* \ot {\rm Id}_{U_i}) = -m_i$
\begin{equation}\label{bf2}
\begin{CD}
\mathcal{L} \ot U_i @>{m_i \circ (\tau^* \ot {\rm Id}_{U_i})}>> U_i \\
@V{{\rm Id}_{\mathcal{L}} \ot \tau^*}VV @VV{\tau^*}V \\
\mathcal{L} \ot U_{n-i} @>{m_{n-i}}>> U_{n-i} 
\end{CD} 
\end{equation}

\medskip

\noindent \textbf{3.} For any $i,j =1, \ldots , n-1$, the product \eqref{m} induces, by restriction,
a morphism
$$
m_{ij} \colon U_i \ot U_j \to U_{\overline{i+j}} \, , 
$$
where $\overline{i+j} \in \{ 0, \ldots , n-1\}$, $ \overline{i+j} = i+j$ (mod $n$), and by definition
$U_0 = q_* \Oh_Z$. In particular, for any $i=1, \ldots , n-1$,
there is a morphism
$$
m_{i,n-i} \colon U_i \ot U_{n-i} \to \Oh_Y \op \mathcal{L} \, .
$$
In the following, we will denote again with $m_{i,n-i}$ the previous morphism under the identification 
$\tau^* \colon U_{n-i} \to U_i$, hence
$$
m_{i,n-i} \colon U_i \ot U_{i} \to \Oh_Y \op \mathcal{L} \, .
$$

Let $m_{i,n-i}^+ \colon U_i \ot U_{i} \to \Oh_Y$ and $m_{i,n-i}^- \colon U_i \ot U_{i} \to \mathcal{L}$
be the compositions of $m_{i,n-i}$ with the projections onto $\Oh_Y$ and  $\mathcal{L}$ respectively.
Notice that, with respect to the involution on $U_i \ot U_{i}$ that exchanges 
the factors, $m_{i,n-i}^+$ is symmetric, while $m_{i,n-i}^-$ is antisymmetric, hence
they can be seen as morphisms
\begin{eqnarray}\label{min-ipm}
 m_{i,n-i}^+ \colon {\rm Sym}^2 (U_i ) \to \Oh_Y \, ,  \qquad 
  m_{i,n-i}^- \colon \we^2 (U_i) \to \mathcal{L} \, ,
\end{eqnarray} 
or equivalently as sections
\begin{eqnarray}\label{min-ipmsec}
 m_{i,n-i}^+ \in H^0 (Y, {\rm Sym}^2 (U_i^\vee )) \, ,  \,
  m_{i,n-i}^- \in H^0(Y, \we^2 (U_i^\vee) \ot \mathcal{L}) \, .
\end{eqnarray} 

\begin{prop}\label{U3}
For any $i=1, \ldots , n-1$ the following statements hold true.
\begin{itemize}
\item[(i)]
$m_{i,n-i}^+$ is determined by $m_i$ and $m_{i,n-i}^-$.
\item[(ii)]
The divisor  of zeros of $m_{i,n-i}^-$ coincides  with  the divisor $\D_p$
defined in \eqref{Bq}. In particular  $m_{i,n-i}^-$ yields 
an isomorphism between $\we^2 U_i$ and $\mathcal{L} \ot \Oh_Y (-\D_p)$.
\end{itemize}
\end{prop}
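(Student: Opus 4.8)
The plan is to reduce both statements to a computation at the generic point of each prime divisor of $Y$, using reflexivity of the sheaves involved together with Hartogs' theorem; statement (i) will then follow from associativity of the multiplication \eqref{m}, and statement (ii) from a case analysis of the local structure of $\pi = q\circ p$.

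\emph{Part (i).} The triple $(U_i, m_i, m_{i,n-i})$ is obtained by restriction from the $\Oh_Y$-algebra $\pi_* \Oh_X = \op_j q_* \sF_j$, so associativity of \eqref{m} applies. Let $\nu \colon \sL \ot_{\Oh_Y} (q_* \Oh_Z) \to q_* \Oh_Z$ be the restriction of the multiplication of $q_* \Oh_Z = \Oh_Y \op \sL$ to the $\sL$-factor. Associativity, combined with the identification $\tau^* \colon U_{n-i} \xrightarrow{\sim} U_i$ used to define $m_{i,n-i}$ on $U_i \ot U_i$, gives
$$
m_{i,n-i} \circ (m_i \ot \Id_{U_i}) \;=\; \nu \circ (\Id_\sL \ot m_{i,n-i}) \colon \ \sL \ot U_i \ot U_i \;\to\; q_* \Oh_Z \, .
$$
Now $\nu$ carries $\sL \ot \Oh_Y$ by the canonical isomorphism $\iota$ onto the summand $\sL \subset q_* \Oh_Z$ and carries $\sL \ot \sL$ into $\Oh_Y$; projecting the identity onto the $\sL$-summand and using $m_{i,n-i} = m^+_{i,n-i} \op m^-_{i,n-i}$ therefore yields
$$
m^-_{i,n-i} \circ (m_i \ot \Id_{U_i}) \;=\; \iota \circ (\Id_\sL \ot m^+_{i,n-i}) \, .
$$
Since $\iota$ is an isomorphism and $m_i$ is part of the data, this relation expresses $m^+_{i,n-i}$ through $m_i$ and $m^-_{i,n-i}$, which is (i).

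\emph{Part (ii).} Since $m^-_{i,n-i} \colon \we^2 U_i \to \sL$ is a homomorphism of invertible sheaves, it suffices to compute its order of vanishing at the generic point $\eta$ of each prime divisor $\Gamma \subset Y$ and to compare it with the multiplicity of $\Gamma$ in $\D_p$, which is $0$ or $1$ because $(q^0)^* \D_{p^0} = \sB_{p^0}$ is reduced. One first observes that $\D_p$ and $\sB_q$ have no common component: a component $\tilde\Gamma$ of $q^{-1}(\sB_q)$ occurs with even multiplicity in the $q$-pullback of any divisor of $Y$, whereas it would occur with multiplicity one in $\sB_{p^0} = \sum_k D_k^0$ if it were a component of some $D_k$. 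Hence over $\Oh_{Y,\eta}$ exactly one of the following holds: (A) $\pi$ is étale; (B) $\Gamma \subset \sB_q$ and $p$ is étale over $q^{-1}(\eta)$; (C) $\Gamma \subset \D_p$, $q$ is étale over $\eta$, and $p$ is ramified over $q^{-1}(\eta)$. In cases (A) and (B), $\sF_i$ is invertible and unramified over $q^{-1}(\eta)$; choosing there a local generator $e$ of $\sF_i$ normalised by means of the dihedral coordinate $x$ of Proposition \ref{dfe} (using $x\,\tau(x) = F \in \CC(Y)$) so that $e \cdot \tau^* e$ lies in $\Oh_Y$, one checks that $m^-_{i,n-i}$ sends the generator $e \we (z\cdot e)$ of $\we^2 U_i$ to a unit times the generator $z$ of $\sL$, so $m^-_{i,n-i}$ does not vanish at $\eta$, in accordance with $\Gamma \not\subset \D_p$. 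In case (C), let $\tilde\Gamma$ be a component of $q^{-1}(\Gamma)$; it is a component of a single $D_{k_1}$, and near its generic point $p^0$ has the standard cyclic structure of ramification index $|\langle \s^{k_1} \rangle|$, so $\sF_i$ is there generated by $x^i$ divided by a power of a local equation of $D_{k_1}$, with $\tau^*(x^i) = F^i/x^i$. Substituting this into the definition of $m^-_{i,n-i}$ and using the relations \eqref{cyclic} (which pin down the order of $F$ along $\Gamma$), one finds that $m^-_{i,n-i}$ carries the local generator of $\we^2 U_i$ to a local equation of $\Gamma$ times a unit multiple of $z$; hence $m^-_{i,n-i}$ vanishes to order exactly one at $\eta$, matching the multiplicity one of $\Gamma$ in $\D_p$. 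Collecting the three cases, the divisor of zeros of $m^-_{i,n-i}$ equals $\D_p$, and consequently $m^-_{i,n-i}$ induces the asserted isomorphism $\we^2 U_i \cong \sL \ot \Oh_Y(-\D_p)$.

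The formal parts are (i) and the reduction to generic points of prime divisors via Hartogs. The main obstacle is the explicit local computation in case (C): one has to unwind the cyclic-cover data $(L_k, D_k, \e^k_{i,n-i})$ and the $\overline{\tau}$-action in a neighbourhood of the generic point of $\tilde\Gamma$, and verify that the ``correction'' terms $\sum_k \e^k_{i,n-i} D_k$ combine with the twists produced by $\tau^*$ so as to give vanishing order exactly one and nothing more.
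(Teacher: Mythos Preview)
Your argument for (i) is essentially the paper's: both extract the identity
$m_{i,n-i}^{\pm}\bigl(m_i(r\ot s_1)\ot s_2\bigr)=m\bigl(r\ot m_{i,n-i}^{\mp}(s_1\ot s_2)\bigr)$
from associativity of the product on $\pi_*\Oh_X$ and read off $m_{i,n-i}^+$ from $m_i$ and $m_{i,n-i}^-$.

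For (ii) you take a genuinely different route. The paper avoids your case analysis over $Y$ entirely by working instead on the smooth locus $Z^0$ of $Z$ and exploiting the $\Oh_Z$-module structure of $\sF_i$. Locally on $Z^0$ it chooses coordinates with $w^2=y_1$ and a generator $e_i$ of $\sF_i$ as $\Oh_Z$-module, normalised so that $\overline\tau^*(e_i)=e_{n-i}$; then $u_i:=e_i$, $v_i:=w\,e_i$ is an $\Oh_Y$-basis of $U_i$, and one computes in a single line
\[
m_{i,n-i}^-\,(u_i\we v_i)=e_i\,\overline\tau^*(w e_i)-\overline\tau^*(e_i)\,(w e_i)=-2w\,e_i e_{n-i}=-2w\,q^*(\d_p),
\]
using the cyclic relation \eqref{cyclic}, which forces $e_i e_{n-i}$ to be a local equation of $\sB_p$. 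This handles your cases (A), (B), (C) simultaneously: the ramification type of $q$ enters only through whether $w$ is a unit in $\Oh_Z$, which is irrelevant since $w$ generates $\sL$ regardless, and the ramification of $p$ enters only through the order of vanishing of $e_i e_{n-i}$.

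By contrast, your case (C) is left as an assertion. Working over $Y$ when $q$ is \'etale splits $q^{-1}(\eta)$ into two points $\eta_1,\eta_2$ interchanged by $\overline\tau$, so an $\Oh_Y$-basis of $U_i$ must be assembled from separate $\Oh_Z$-generators at $\eta_1$ and $\eta_2$; the global dihedral coordinate $x$ of Proposition~\ref{dfe} does not directly furnish such generators, and tracking the powers of the local equation of $D_{k_1}$ together with the $\e^k_{i,n-i}$'s is exactly the bookkeeping the paper's choice of $(e_i, w\,e_i)$ sidesteps. Your outline would go through, but the paper's computation is both shorter and uniform; I recommend replacing the case split in (ii) by the $\Oh_Z$-basis argument above.
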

\begin{proof}
(i) Let $y\in Y$, let $(U_i)_y$ be the stalk of $U_i$ over $y$
 and let $s_1, s_2 \in (U_i)_y$. Then
$$
m_{i,n-i} (s_1\ot s_2) = s_1 \tau^* (s_2) \, ,
$$
hence
\begin{eqnarray*}
m_{i,n-i}^+ (s_1\ot s_2) &=& \frac{1}{2} \left(s_1 \tau^* (s_2) + \tau^* (s_1) s_2 \right) \, ,\\
m_{i,n-i}^- (s_1\ot s_2) &=& \frac{1}{2} \left(s_1 \tau^* (s_2) - \tau^* (s_1) s_2 \right) \, ; 
\end{eqnarray*}
where the product $s_1 \tau^* (s_2)$ (respectively $\tau^* (s_1) s_2 $) is the 
usual one between stalks of regular functions defined in some neighborhood 
of $\pi^{-1}(y)$. For any $r\in (\mathcal{L})_y$,  the associativity of the multiplication implies that
$$
m_{i,n-i} \left( m_i (r\ot s_1) \ot s_2 \right) = m \left( r \ot m_{i,n-i} (s_1 \ot s_2) \right)
$$
and hence
$$
m_{i,n-i}^\pm \left( m_i (r\ot s_1) \ot s_2 \right) = m \left( r \ot m_{i,n-i}^\mp (s_1 \ot s_2) \right) \, .
$$
This implies that, under the natural identification $\Oh_{Y,y} \cong {\rm End}((\mathcal{L})_y)$,
$m_{i,n-i}^+(s_1 \ot s_2) = m_{i,n-i}^- (m_i( (\_) \ot s_1) \ot s_2)$, and hence the claim follows.

\medskip

\noindent (ii) Without loss of generality we   assume that $Z$ is smooth.
Indeed, since $Z$ is normal,
its singular locus ${\rm Sing}(Z)$ has codimension $\geq 2$,
so, for $Z^0 := Z\setminus {\rm Sing}(Z)$ and $Y^0 := q(Z^0)$,  the divisor of zeros of $m_{i,n-i}^-$ and $\D_p$ are determined by their restrictions
to $Y^0$.

For any $y\in Y$,
let us consider the stalk $(m_{i,n-i}^-)_y$ of $m_{i,n-i}^-$ at $y$.
Let $u_i , v_i$ be  a  basis   of  $(U_i)_y$ as $\Oh_{Y,y}$-module,  
then $u_i \ot v_i - v_i \ot u_i$ is a  basis of $(\we^2 U_i)_y$, viewed as the submodule 
of  $U_i \ot U_i$. We have:
\begin{eqnarray}\label{min-i-uv}
m_{i,n-i}^- (u_i \ot v_i - v_i \ot u_i) &=& u_i \tau^*(v_i) - v_i \tau^*(u_i) \nonumber \\
&=& u_i \tau^*(v_i) - \tau^*(u_i \tau^*(v_i))  \, ,
\end{eqnarray}
where we consider $u_i$ and $v_i$ as stalks of regular functions  
defined on some open neighbourhood 
of $\pi^{-1}(y)$ in $X$,
such that $\s^* (u_i) = \exp(\frac{2\pi\sqrt{-1} }{n}i) u_i$
and $\s^* (v_i) = \exp(\frac{2\pi\sqrt{-1} }{n}i) v_i$. 

Let us choose local analytic coordinates $(y_1 , \ldots , y_d)$ for $Y$ at $y$,
and $w$ on $Z$, such that  $Z$ is given locally by the equation $w^2=y_1$.
Furthermore, for any $i=1, \ldots , n-1$, let $e_i$ be a 
basis of $\mathcal{F}_i$ as $\left(\frac{\bC[y_1, \ldots , y_d, w]}{(w^2 - y_1)}
\right)$-module. Then let 
\begin{equation}\label{uivi}
u_i := 1\cdot e_i \, , \quad v_i := w\cdot e_i \, .
\end{equation}
Notice that we can choose the $e_i$'s in such a way that
$\tau^* (e_i ) = e_{n-i}$, for any $i= 1, \ldots , n-1$, since $\tau^*$ identifies 
$\mathcal{F}_i$ with $\mathcal{F}_{n-i}$.
So, substituting \eqref{uivi} in \eqref{min-i-uv} and using 
the equations $\tau^*(w)=-w$ and $\tau^*(e_i)=e_{n-i}$, we obtain:
\begin{eqnarray}\label{eien-i=dq}
m_{i,n-i}^- (u_i \ot v_i - v_i \ot u_i) &=& u_i \tau^*(v_i) - \tau^*(u_i \tau^*(v_i))
\nonumber \\
&=& e_i\tau^*(we_i) - \tau^*(e_i\tau^*(we_i)) \nonumber \\
&=& e_i(-we_{n-i}) - \tau^*(e_i(-we_{n-i})) \nonumber \\
&=& - we_ie_{n-i} - we_ie_{n-i} \nonumber \\
&=& -2we_ie_{n-i} = -2 wb_p \, ,
\end{eqnarray}
where  we have used the fact that
$e_ie_{n-i}= b_p$,  $b_p$ is a local equation for $\sB_p$
(this follows from \eqref{cyclic}).
Finally, let $\d_p$ be a local equation for $\D_p$ such that 
$q^*(\d_p) = b_p$, then the previous computation shows that
$$
m_{i,n-i}^- (u_i \ot v_i - v_i \ot u_i) = - 2wq^*(\d_p) \, ,
$$
from which the claim follows.
\end{proof}

The following result  is a converse to Proposition \ref{Fi}.
\begin{theo}[Structure of $D_n$-covers]\label{STDn}
Let $Y$ be a smooth variety and $n$ be a positive integer.
Then, to the following data a), b) and c), we can associate a $D_n$-cover 
$\pi \colon X \to Y$ in a natural way.
\begin{itemize}
\item[a)]
A line bundle $\mathcal{L}$  and an effective reduced divisor $\sB_q$ on $Y$,
such that  $\mathcal{L}^{\ot 2} \cong \Oh_Y (-\sB_q)$.
\item[b)]
Reduced effective Weil divisors $D_1 , \ldots , D_{\lfloor \frac{n}{2} \rfloor}$ on 
$Z:=\spec (\Oh_Y \op \sL)$ without common components, such that
$\overline{\tau}(D_1 \cup \ldots \cup D_{\lfloor \frac{n-1}{2}\rfloor})$ 
doesn't have common components with 
$D_1 \cup \ldots \cup D_{\lfloor \frac{n-1}{2}\rfloor}$, 
and, in the case where  $n$ is even,  
$\overline{\tau}(D_{\frac{n}{2}}) = D_{\frac{n}{2}}$;
where $\lfloor a \rfloor$ denotes the integer part of
a number  $a\in \QQ$, and 
$\overline{\tau}$ is the involution of the  double cover $q\colon Z \to Y$.
\item[c)]
Divisorial sheaves $\sF_1 , \ldots , \sF_{\lfloor \frac{n}{2}\rfloor}$ on  $Z$
flat over $\Oh_Y$, such that,
 for any $i,j = 1, \ldots , n-1$,
\eqref{?} holds, and if $n$ is even
$\sF_{\frac{n}{2}} = \overline{\tau}^*( \sF_{\frac{n}{2}})$; where for $\lfloor \frac{n}{2} \rfloor < \ell , k \leq n-1$,
$\sF_\ell := \overline{\tau}^* ( \sF_{n-\ell})$ and $D_k := \overline{\tau}(D_{n-k})$,
 the coefficients $\e^k_{ij}$ are defined in \eqref{eijk}.
\end{itemize}

The variety $X$ so constructed is normal if and only if, 
setting $\varkappa:= \gcd \{ k=1, \ldots , n-1 \, | \, D_k \not=0 \}$,
then either $\varkappa=1$, or $\frac{n}{\varkappa}\sF_1 - \sum_{k=1}^{n-1}\frac{k}{\varkappa}D_k$
has order precisely $\varkappa$ in the group of divisorial sheaves of $Z$. 
In this case $\pi_* \Oh_X= \Oh_Y \op \sL \op_{i=1}^{{\lfloor n/2\rfloor}} q_*\sF_i$, in particular $\pi$ is flat.
\end{theo}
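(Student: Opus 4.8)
The plan is to construct $X$ along the factorization $\pi=q\circ p$ and then to produce the reflection $\tau$. From data a) one forms the flat double cover $q\colon Z=\spec(\Oh_Y\oplus\sL)\to Y$ as in Section 3, taking for ``$F$'' the section $\d_q\in H^0(\Oh_Y(\sB_q))$ cutting out $\sB_q$; since $\sB_q$ is reduced, $Z$ is normal, with involution $\overline{\tau}$ the one occurring in b), c). Next one extends the divisorial data to all indices by the prescription in c), $D_k:=\overline{\tau}(D_{n-k})$ and $\sF_\ell:=\overline{\tau}^*\sF_{n-\ell}$ for $\lfloor n/2\rfloor<k,\ell\le n-1$ (plus the self-symmetry $\overline{\tau}^*\sF_{n/2}\cong\sF_{n/2}$ when $n$ is even), and records the one consistency point: \eqref{?} then holds for \emph{every} pair $i,j\in\{1,\dots,n-1\}$, not just for the pairs supplied by b), c). This reduces to the symmetry $\e^k_{i,j}=\e^{n-k}_{n-i,n-j}$ of the coefficients \eqref{eijk}, which holds because $|\langle\s^{n-k}\rangle|=|\langle\s^{k}\rangle|$ and $(n-i)(n-k)\equiv ik\pmod n$, so that $\imath_{n-i}(n-k)=\imath_i(k)$.

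With the full collection $\{\sF_i,D_k\}_{i,k=0}^{n-1}$ in hand I would set $\sA:=\bigoplus_{i=0}^{n-1}\sF_i$, $\sF_0=\Oh_Z$, and define the $\Oh_Z$-algebra structure by letting $m_{ij}\colon\sF_i\otimes\sF_j\to\sF_{\overline{i+j}}$ be the isomorphism \eqref{?} composed with the natural maps to and from double duals and with $\Oh_Z(-\sum_k\e^k_{i,j}D_k)\hookrightarrow\Oh_Z$. Since $Z$ is normal and each $\sF_i$ is reflexive, hence the pushforward of $\sF_i|_{Z^0}$ from the smooth locus $Z^0$, commutativity and associativity of this product can be checked on $Z^0$; there $\sA|_{Z^0}=\bigoplus_i\Oh_{Z^0}(L_i)$ is the usual cyclic-cover algebra attached to \eqref{cyclic}, for which the structure theorem for cyclic covers (\cite{pardini}, \cite{BC08}, \cite{cyclic}) supplies compatible associative multiplications. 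One then puts $X:=\spec_Z\sA$, with $\s$ acting by $\z^i$ on the summand $\sF_i$; thus $p\colon X\to Z$ is a $\ZZ/n\ZZ$-cover and $Z=X/\langle\s\rangle$.

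The heart of the proof — and the step I expect to be the main obstacle — is to lift $\overline{\tau}$ to an involution $\tau$ of $X$ with $\s\tau=\tau\s^{-1}$. I would extract it from the function field: by Proposition \ref{dfe} there is a $D_n$-extension $\CC(X)\supset\CC(Y)$ with $\CC(X)=\CC(Z)(x)$, $\s(x)=\z x$ and $\tau(x)=F/x$ for $F=\d_q\in\CC(Y)$, and one may arrange that the intermediate cyclic extension $\CC(X)\supset\CC(Z)$ is precisely the one realizing the cyclic cover $p$ above (i.e.\ with branch divisors $D_k$ and first eigensheaf $\sF_1$). On the field $\tau^2=\mathrm{id}$ (because $\overline{\tau}(F)=F$), $\s\tau=\tau\s^{-1}$, hence $(\s\tau)^2=\mathrm{id}$. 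It remains to see that $\tau$ stabilizes the concrete model $\sA=\bigoplus_i\sF_i\subset\CC(X)$: as $\tau$ interchanges the $\z^i$- and $\z^{n-i}$-eigenspaces, and $\sF_i|_{Z^0}$ is the $\Oh_{Z^0}$-span of $x^i$ suitably twisted along the $D^0_k$, the inclusion $\tau(\sF_i)\subseteq\sF_{n-i}$ follows directly from $\overline{\tau}(D_k)=D_{n-k}$ in b) and $\sF_\ell=\overline{\tau}^*\sF_{n-\ell}$ in c); applying the same to $\tau^2=\mathrm{id}$ forces equality. Hence $\tau$ descends to an automorphism of $X$, $\langle\s,\tau\rangle\cong D_n$ (of order exactly $2n$, since $\tau$ does not preserve the $\langle\s\rangle$-grading), and $X/D_n=(X/\langle\s\rangle)/\langle\overline{\tau}\rangle=Z/\langle\overline{\tau}\rangle=Y$, so $\pi=q\circ p$ is a $D_n$-cover of $Y$ whenever $X$ is a variety. (A field-free alternative is to choose the isomorphisms \eqref{?} of the previous step $\overline{\tau}$-covariantly, the a priori cocycle obstruction being killed by a rescaling and both sides of the equivariance landing in the correct twist exactly because $\e^k_{i,j}=\e^{n-k}_{n-i,n-j}$; but the field description makes $\tau^2=\mathrm{id}$ transparent.)

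Finally, flatness and the normality criterion. Each $q_*\sF_i$ is coherent and flat over $\Oh_Y$ (flatness of $\sF_i$ is hypothesis c)), hence locally free, of rank $2$ since $q$ is generically étale of degree $2$ and $\sF_i$ has generic rank $1$; so $\pi_*\Oh_X=\bigoplus_{i=0}^{n-1}q_*\sF_i=\Oh_Y\oplus\sL\oplus\bigoplus_{i=1}^{\lfloor n/2\rfloor}q_*\sF_i$ (grouping $q_*\sF_i$ with $q_*\sF_{n-i}$ via $\tau^*$) is locally free of rank $2n$, and $\pi$ is flat. Being finite and flat over the smooth $Y$, $X$ is Cohen--Macaulay by the argument in Proposition \ref{fcm}, hence $S_2$; and reducedness of the branch divisors $D_k$ and $\sB_q$ (which have no common components) makes $X$ regular in codimension one wherever it is irreducible, so $X$ is a normal variety precisely when it is irreducible. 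The latter is the classical irreducibility question for the cyclic cover $p$: $p$ factors through a non-trivial intermediate cyclic cover of $Z$ exactly when, with $\varkappa=\gcd\{k:D_k\neq 0\}$, the class $\tfrac{n}{\varkappa}\sF_1-\sum_{k=1}^{n-1}\tfrac{k}{\varkappa}D_k$ in the group of divisorial sheaves of $Z$ has order strictly smaller than $\varkappa$ (and if $\varkappa=1$ there is no such factorization). This yields exactly the stated dichotomy, and in the normal case $X$ is the asserted $D_n$-cover with $\pi_*\Oh_X$ as above.
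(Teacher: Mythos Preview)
Your overall architecture is exactly the paper's: build the double cover $Z$ from a), build $X=\spec_Z\bigoplus_i\sF_i$ as a cyclic $\ZZ/n\ZZ$-cover of $Z$ from b), c), then produce the reflection $\tau$, and finally defer normality to the irreducibility criterion for cyclic covers in \cite{cyclic}. Your explicit verification of the symmetry $\e^k_{i,j}=\e^{n-k}_{n-i,n-j}$ is a useful addition --- the paper uses this implicitly when asserting that $\overline{\tau}^*$ is an algebra homomorphism.

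There is, however, a genuine gap in your primary route to $\tau$. You write that ``by Proposition \ref{dfe} there is a $D_n$-extension $\CC(X)\supset\CC(Y)$ \ldots\ and one may arrange that the intermediate cyclic extension $\CC(X)\supset\CC(Z)$ is precisely the one realizing the cyclic cover $p$.'' But this is exactly what has to be \emph{proved}: Proposition \ref{dfe} describes the shape of a $D_n$-extension once it exists, it does not produce a lift of $\overline{\tau}$ to the specific cyclic extension you have already constructed. Concretely, writing $\CC(X)=\CC(Z)(x)$ with $x^n=g$, a lift $\tau$ with $\tau|_{\CC(Z)}=\overline{\tau}$, $\s\tau=\tau\s^{-1}$ and $\tau^2=\mathrm{id}$ must satisfy $\tau(x)=h/x$ with $h\in\CC(Y)$ and $h^n=g\,\overline{\tau}(g)$; the existence of such an $h$ is a nontrivial consequence of b) and c), not something Proposition \ref{dfe} hands you. (Incidentally, your identification $F=\d_q$ is off: in the notation of \S\ref{Dfe} one has $\d_q=f=a^2-F^n$, not $F$.)

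The paper bypasses this by taking precisely what you call the ``field-free alternative'': it fixes once and for all local generators $e_i$ of $\sF_i|_{Z^0}$ satisfying $e_{n-i}=\overline{\tau}^*(e_i)$, defines the multiplication by $e_ie_j=e_{\overline{i+j}}\prod_k\d_k^{\e^k_{ij}}$, and then $\overline{\tau}^*$ is \emph{manifestly} an algebra automorphism because $\overline{\tau}(D_k)=D_{n-k}$ together with the $\e$-symmetry gives $\overline{\tau}^*(e_ie_j)=\overline{\tau}^*(e_i)\overline{\tau}^*(e_j)$. This is the step you should promote from alternative to main argument; once you do, the rest of your write-up (flatness from the hypothesis on $\sF_i$, Cohen--Macaulayness via Proposition \ref{fcm}, and the normality/irreducibility dichotomy via \cite{cyclic}) is correct and in fact more detailed than the paper's own proof.
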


Before giving the proof of the previous theorem, two remarks are in order.
\begin{rem}
{\bf 1.} Using the results of Section 3, the  divisorial sheaves in c) correspond to 
pairs $(U_1, m_1), \ldots , (U_{\lfloor \frac{n}{2}\rfloor}, m_{\lfloor \frac{n}{2}\rfloor})$
consisting of rank $2$ vector bundles on $Y$,
$U_1, \ldots , U_{\lfloor \frac{n}{2}\rfloor}$,  and morphisms 
$m_i \colon U_i \ot \sL \to U_i$ such that $m_i^2 = \d_q \operatorname{Id}_{U_i}$,
where $\sB_q = \{ \d_q = 0 \}$. With this notation, $\overline{\tau}^* \sF_i$
corresponds to $(U_i , -m_i)$, for any $i=1, \ldots , n-1$.
Furthermore, using Proposition \ref{picz}, the relation \eqref{?} 
can be written in terms of the $(U_i , m_i)$'s and the $D_i$'s.

\smallskip

\noindent {\bf 2.} A similar criterion for the existence of
$D_n$-covers $\pi \colon X \to Y$ has been given in \cite{Tok},
where $X$ is constructed  as the normalization of $Y$
in a certain dihedral field extension of $\CC ( Y )$.
Here, using the structure theorem for cyclic covers, we construct
$\pi \colon X \to Y$ explicitly, we hope that this procedure 
could be useful for further investigations of Galois covers.  
\end{rem} 

\begin{proof}
The data a) determines a flat double cover $q\colon Z \to Y$ in the usual way,
$Z:= {\rm Spec}\left( \Oh_Y \op \mathcal{L}\right)$ and $q$ is given by the 
inclusion $\Oh_Y \hookrightarrow \Oh_Y \op \mathcal{L}$ as the first summand. 
Notice that  $Z$ is normal since $\sB_q$ is reduced.

We define $X:= \spec \left( \Oh_Z \op \sF_1 \op \ldots \op \sF_{n-1} \right)$,
where $\Oh_Z \op \sF_1 \op \ldots \op \sF_{n-1}$ is the sheaf of 
$\Oh_Z$-algebras with algebra structure given by the isomorphisms
\eqref{?} in the following way.  Since $\sF_{\overline{i+j}}$ is a divisorial sheaf, 
for $i,j=1, \ldots , n-1$, a morphism $\sF_i \ot \sF_j \to \sF_{\overline{i+j}}$
is uniquely determined by its restriction on the smooth locus $Z^0$ of $Z$.
Let us  fix firstly sections
$\d_k \in H^0(Z, \Oh_Z (D_k))$, such that $D_k =\{ \d_k = 0 \}$ on $Z^0$,
for any $k=1, \ldots , n-1$.
By Lemma \ref{free}, on $Z^0$ the sheaves $\sF_1 , \ldots , \sF_{n-1}$ are locally free, 
so, locally where they are trivial, we choose generators 
$e_1 , \ldots , e_{n-1}$ such that $e_{n-i} = \overline{\tau}^*(e_i)$, for any $i$.
The algebra structure on the restriction of $\Oh_Z \op \sF_1 \op \ldots \op \sF_{n-1}$
on such open subsets is defined as usual by the equations
$$
e_i e_j = e_{\overline{i+j}} \prod_{k=1}^{n-1} \d_k^{\e^k_{ij}} \, , \quad \mbox{for any}
\, i,j=1, \ldots , n-1 \, .
$$
Notice that, if we choose different generators, $\tilde{e}_1, \ldots , \tilde{e}_{n-1}$
satisfying the same conditions $\tilde{e}_{n-i} = \overline{\tau}^*(\tilde{e}_i)$,
then we obtain an algebra canonically isomorphic to the previous one.
Hence the construction globalizes and $\Oh_Z \op \sF_1 \op \ldots \op \sF_{n-1}$
becomes a sheaf of $\Oh_Z$-algebras. The morphism $\pi \colon X \to Y$
is defined as usual.

From the previous construction and from the fact that $D_{n-k}=\overline{\tau}(D_k)$,
it follows that $\overline{\tau}^* \colon \Oh_Z \op \sF_1 \op \ldots \op \sF_{n-1}
\to \Oh_Z \op \sF_1 \op \ldots \op \sF_{n-1}$ is a morphism of $\Oh_Z$-algebras.
This defines an involution  $\tau \colon X \to X$. By construction we have that
$\s^* \circ \tau^* = \tau^* \circ (\s^{-1})^*$, hence $X$ carries an action of $D_n$
such that $\pi \colon X \to Y$ is a $D_n$-cover.

The condition  for $X$ to be normal follows from Theorem 1.1 in \cite{cyclic}.
\end{proof}

\begin{rem}
Similarly as for cyclic covers, $X$ in Theorem \ref{STDn} is determined
by the data a), b) and $\sF_1$ (or $(U_1, m_1)$), such that 
$$
(\sF_1^{\ot n})^{**} \cong \Oh_Z \left(-\sum_{k=1}^{n-1} k D_k \right) \, .
$$
Furthermore, by Proposition \ref{kn}, $(\sF_1^{\ot n})^{**} = \left( {\rm Sym}^n (U_1)
\right)/\sK_n$.
\end{rem}

\subsection{$D_3$-covers and triple covers.}\label{d3&triple}
In this section we  restrict ourselves to $D_3$-covers and  relate the  results that we obtained so far
with the structure theorem for flat finite morphisms of degree $3$
of algebraic varieties  (\cite{miranda}), also called \textit{triple covers}. 
This relation originates from the following  fact: for every 
$D_3$-cover $\pi \colon X \to Y$, any quotient $W:=X/\langle \s^i \tau \rangle$ has a natural structure
of triple cover of $Y$ induced by $\pi$; conversely, if $W\to Y$ is a triple cover which is not Galois,
then its Galois closure is a $D_3$-cover.  Since the elements $\s^i \tau$ are 
pairwise conjugate, the corresponding triple covers are isomorphic, hence in the following
we consider only  $X/\langle  \tau \rangle$.
Notice that the structure of $D_3$-covers has been studied also in \cite{easton}.

Let $\pi \colon X \to Y$ be a $D_3$-cover. We have seen in the previous section that $\pi$
is determined by the locally free sheaves $\mathcal{L} , U_1 , U_2$ and a morphism
of $\Oh_Y$-modules
$$
m\colon \left( \Oh_Y \op \mathcal{L} \op U_1 \op U_2 \right)^{\ot 2} \to \Oh_Y \op \mathcal{L} \op U_1 \op U_2
$$
which gives an associative commutative product. The involution $\tau$ yields an isomorphism $\tau^* \colon U_1 \to U_2$
as explained in the previous section.
Under this identification  one  easily see that $\pi \colon X \to Y$
is determined by $\Oh_Y$, $\mathcal{L}$, $U_1$, $m_{12}$, $m_1$ and $m_{11}$
(see the previous section for their definitions).
In particular,
$$
\Oh_X = \Oh_Y \op \mathcal{L} \op U_1 \op U_1 \, 
$$
and the involution $\tau^*$ acts on the  element $(a,b,c,d) \in \Oh_Y \op \mathcal{L} \op U_1 \op U_1$
as follows:
$$
\tau^* (a,b,c,d) = (a,-b,d,c) \, . 
$$
The  subsheaf of $\tau^*$-invariants, $\left( \Oh_X\right)^{\tau^*} \subset \Oh_X$, consists of the elements 
of the form $(a, 0 , c, c) \in \Oh_Y \op \mathcal{L} \op U_1 \op U_1$, so the sheaf 
of regular functions on $W:= X/\langle \tau \rangle$ is
\begin{equation}\label{OW}
\Oh_W \cong \Oh_Y \op U_1 
\end{equation}
with inclusion in $\Oh_Y \op \mathcal{L} \op U_1 \op U_1$ given by 
$(a,c) \mapsto (a,0,c,c)$.
The morphism $\pi \colon X \to Y$ descends to a morphism $f\colon W \to Y$,
which is a triple cover. We refer to \cite{miranda} for definitions, notations and results concerning
triple covers.

\begin{prop}
Under the identification \eqref{OW}, $U_1$ coincides with the  Tschirnhausen module
of $\Oh_W$ over $\Oh_Y$, that is $U_1$ consists of the elements of $\Oh_W$
whose minimal cubic polynomial   has no square term ($U_1 = E$ in the 
notation of \cite{miranda}).  
Furthermore, the tensor $\phi_2$ in \cite{miranda} coincides with $m_{11} \colon U_1 \ot U_1 \to U_1$,
which has been defined in the previous section.
\end{prop}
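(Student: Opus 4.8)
The plan is to prove both assertions by computing everything inside the ambient Galois cover. By Proposition~\ref{Fi} we may write $\Oh_X=\Oh_Y\op\mathcal L\op U_1\op U_1$, where the $D_3$-action and the product $m$ are fully explicit; here the two copies of $U_1$ are the $\s^*$-eigensheaves $U_1$ (eigenvalue $\z$, a primitive cube root of $1$) and $U_2=\tau^*U_1$ (eigenvalue $\z^{-1}$) sitting inside $\Oh_X$, the second one re-coordinatized via $\tau^*$. Under the identification $\Oh_W\cong\Oh_Y\op U_1$ of this section, the inclusion $\Oh_W=(\pi_*\Oh_X)^{\tau^*}\hookrightarrow\Oh_X$ is $(a,c)\mapsto a+c+\tau^*c$, with $c$ a section of $U_1\subset\Oh_X$ and $\tau^*c$ the corresponding section of $U_2\subset\Oh_X$.

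First I would show that $U_1\subset\Oh_W$ is exactly the trace-zero part, hence Miranda's Tschirnhausen module $E$. Note that $\Oh_Y\hookrightarrow\Oh_W$ is finite flat, since $\Oh_W=\Oh_Y\op U_1$ is locally free of rank $3$. Now fix a section $c$ of $U_1$. Its three $\CC(Y)$-conjugates, indexed by the cosets of $\langle\tau\rangle$ in $D_3$ with representatives $1,\s,\s^2$, are $c$, $\s^*c=\z c$, $(\s^2)^*c=\z^2c$; their sum is $(1+\z+\z^2)c=0$, so the characteristic cubic of multiplication by $c$ over $\CC(Y)$ has vanishing $t^2$-coefficient and $\operatorname{Tr}_{\CC(W)/\CC(Y)}(c)=0$. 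Since $\Oh_W$ is $\Oh_Y$-locally free the module trace computes the field trace, so $\operatorname{Tr}_{\Oh_W/\Oh_Y}$ vanishes on $U_1$; as $\operatorname{Tr}(1)=3$ is a unit (the characteristic does not divide $2n=6$), $E:=\ker(\operatorname{Tr}_{\Oh_W/\Oh_Y})$ is a rank-$2$ direct complement of $\Oh_Y\cdot1$, and since $U_1\subseteq E$ is itself a rank-$2$ direct summand of $\Oh_W$ we get $U_1=E$, with our splitting agreeing with Miranda's canonical one. As a sanity check, with the dihedral equation $x^6-2ax^3+F^3=0$ of Proposition~\ref{dfe} the invariant $t=x+\tau(x)=x+F/x\in\Oh_W$ satisfies $t^3-3Ft-2a=0$, visibly a Tschirnhausen form.

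Next I would identify $\phi_2$. Since $\Oh_W=\Oh_Y\op U_1$ is the canonical splitting, Miranda's tensors $\phi_1\colon{\rm Sym}^2U_1\to\Oh_Y$ and $\phi_2\colon{\rm Sym}^2U_1\to U_1$ are by definition the $\Oh_Y$- and $U_1$-components of the product of two sections of $U_1$ inside $\Oh_W$. To compute them, expand $(c+\tau^*c)(c'+\tau^*c')$ in $\Oh_X$ and sort the four terms by $\s^*$-eigenvalue: $cc'$ lies in $U_2$, $\tau^*c\cdot\tau^*c'=\tau^*(cc')$ lies in $U_1$, while $c\cdot\tau^*c'$ and $\tau^*c\cdot c'$ lie in the $\s^*$-invariant subalgebra $\Oh_Z=\Oh_Y\op\mathcal L$. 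The two $\mathcal L$-components cancel, because a product of $\tau^*$-invariant sections is $\tau^*$-invariant while $\mathcal L$ is the $\tau^*$-anti-invariant line of $\Oh_Z$; so the product lands back in $\Oh_W$, as it must, with $\Oh_Y$-component $2$ times the $\Oh_Y$-part of $c\cdot\tau^*c'$ and with $U_1$-component $\tau^*(cc')=\tau^*\bigl(m_{11}(c\ot c')\bigr)$. The latter is precisely the morphism $m_{11}\colon U_1\ot U_1\to U_1$ once one uses the identification $\tau^*\colon U_2\to U_1$ of this section. Hence $\phi_2=m_{11}$.

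The eigenvalue bookkeeping is routine; the one delicate point, and the main thing to pin down, is the matching of conventions with \cite{miranda}: that Miranda's splitting $\Oh_W=\Oh_Y\op E$ is the trace-zero one, that his $\phi_2$ is exactly the $E$-valued component of the multiplication ${\rm Sym}^2E\to E$, and that the $\tau^*$-identification $U_2\cong U_1$ used in Proposition~\ref{Fi} is the same one implicit in writing $\Oh_W\cong\Oh_Y\op U_1$. Once these are checked I expect no further obstacle.
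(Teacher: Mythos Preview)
Your argument is correct and, for the second assertion, essentially identical to the paper's: both expand $(c+\tau^*c)(c'+\tau^*c')$ inside $\Oh_X$, sort by $\s^*$-eigenvalue, note that the $\mathcal L$-components cancel by $\tau^*$-invariance, and read off the $U_1$-component as $\tau^*(cc')=\tau^*\!\circ m_{11}(c\otimes c')$.

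For the first assertion there is a mild difference of method. The paper computes the cube directly: $(c+\tau^*c)^3 = \bigl(c^3+\tau^*(c)^3\bigr) + 3\,c\,\tau^*(c)\,(c+\tau^*c)$, an $\Oh_Y$-linear combination of $1$ and $c+\tau^*c$, so the minimal cubic has no $t^2$-term. You instead show the trace vanishes and then match ranks. These are equivalent, since Miranda's $E$ is precisely $\ker(\operatorname{Tr}_{\Oh_W/\Oh_Y})$; your rank argument ($U_1\subseteq E$ are both rank-$2$ direct summands of $\Oh_W$, hence equal) is a clean way to finish that the paper leaves implicit. One small slip to correct: the $\CC(Y)$-conjugates of the element of $\Oh_W$ you call $c$ are not $c,\,\z c,\,\z^2 c$ (these are not $\tau^*$-invariant, hence not in $\CC(W)$), but rather $(\s^i)^*(c+\tau^*c)=\z^i c+\z^{-i}\tau^*c$ for $i=0,1,2$. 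Their sum is still $(1+\z+\z^2)(c+\tau^*c)=0$, so your conclusion is unaffected.
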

\begin{proof}
For $(0,c)\in \Oh_Y \op U_1$, we calculate its cubic power $(0,c)^3$. 
To this aim, we consider its image $(0,0,c,c) \in \Oh_Y \op \mathcal{L} \op U_1 \op U_1$ and
 use the product $m$ of $\Oh_X$. In order to simplify the notation, we write $(0,0,c,c) = c+\tau^*(c) \in \Oh_X$ and denote 
the product $m$ simply by $\cdot$. Then we have the following expression:
$$
(c+\tau^*(c))^3 = c^3 + \tau^*(c)^3 + 2c\tau^* (c) \left( c+\tau^*(c) \right) \, .
$$
Since $c^3 + \tau^*(c)^3 , 2c\tau^* (c) \in \Oh_Y$, the minimal cubic polynomial of $c+\tau^*(c)$
has no square term and so it belongs to the Tschirnhausen module of $\Oh_W$ over $\Oh_Y$.
This proves the first claim.

For the second claim, recall that by definition $\phi_2$ is the composition of the product 
in $\Oh_W$ followed by the projection onto $U_1$. For any $c, c' \in U_1$,  
the product between $c+\tau^*(c), c'+\tau^*(c') \in \Oh_W$ has the following expression: 
$$
\left(c+\tau^*(c) \right)\left(c'+\tau^*(c')\right) = cc' +\tau^*(cc') + c\tau^*(c') + \tau^*(c\tau^*(c')) \, .
$$ 
Notice that $c\tau^*(c') + \tau^*(c\tau^*(c')) \in \Oh_Y$ and that $cc' +\tau^*(cc')$
corresponds to $(0,\tau^*(cc')) \in U_1$ under the identification \eqref{OW}.
So $\phi_2$ coincides with $m_{11}$ under the identification $\tau^* \colon U_1 \to U_2$.
\end{proof}

Let $f\colon W \to Y$ be a flat finite map of degree $3$, let $E$ be the Tschirnhausen module
of $\Oh_W$ over $\Oh_Y$ and let $\phi_2 \colon {\rm Sym}^2 (E) \to E$ be the 
associated triple cover homomorphism, as defined in \cite{miranda}. 
Under the standard identification $E \cong E^\vee \ot \we^2 (E) = {\rm Hom} (E, \we^2 (E))$, $u\mapsto (v\mapsto u\we v)$,
we can view  $\phi_2$ as a morphism $\phi_2 \colon {\rm Sym}^2 (E) \ot E \to \we^2 (E)$.
By \cite[Prop. 3.5]{miranda}, $\phi_2$ is symmetric, hence it induces a morphism
$\Phi \colon {\rm Sym}^3 (E) \to \we^2 (E)$. The structure theorem for triple
covers states that conversely $f\colon W \to Y$ is determined by 
a rank $2$ locally free sheaf $E$ of $\Oh_Y$-modules   and 
an $\Oh_Y$-morphism $\Phi \colon {\rm Sym}^3 (E) \to \we^2 (E)$ (\cite[Thm. 3.6]{miranda}).
The proof of the fact that  $\phi_2$ is symmetric given in \cite{miranda}  is done by a direct calculation using
local coordinates and is a consequence of the fact that $E$ is the Tschirnhausen module of $\Oh_W$
over $\Oh_Y$. 

We  extend  part of these results to  $D_n$-covers, $n\geq 2$,
and  give a different proof of \cite[Prop. 3.5]{miranda} quoted above.
For any $D_n$-cover $\pi \colon X \to Y$, there is a $\Oh_Y$-morphism 
$$
\phi_{n-1}\colon {\rm Sym}^{n-1}(U_1) \to U_1
$$
defined as follows: for any $u_1\ot \ldots \ot u_{n-1} \in {\rm Sym}^{n-1}(U_1)$, their product  
$m \left( u_1\ot \ldots \ot u_{n-1} \right)$ is in $U_{n-1}$, so
applying $\tau^* \colon U_{n-1} \to U_1$ we obtain an element in $U_1$, this is
by definition $\phi_{n-1}(u_1\ot \ldots \ot u_{n-1})$. 
Under the identification $U_1 \cong U_1^\vee \ot \we^2(U_1)$
as before, we can see $\phi_{n-1}$ as a morphism 
$$
\phi_{n-1} \colon {\rm Sym}^{n-1}(U_1) \ot U_1 \to \we^2(U_1) \, .
$$
\begin{prop}
The morphism $\phi_{n-1}$ is symmetric, hence induces a morphism of $\Oh_Y$-modules
$\Phi_{n-1} \colon {\rm Sym}^{n}(U_1)  \to \we^2(U_1)$.
\end{prop}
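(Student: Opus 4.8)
The plan is to exhibit $\phi_{n-1}$ as (a component of) the $n$-fold multiplication map of $U_1$ inside the algebra $\pi_* \Oh_X$, and to deduce its symmetry from the commutativity of that algebra together with Proposition \ref{U3}(ii). Concretely, the source of symmetry will be the $n$-fold product map, which is tautologically invariant under all permutations because the product on $\pi_*\Oh_X$ is associative and commutative.

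First I would note that the product of $n$ local sections of $U_1$ is a section of $U_0 = q_* \Oh_Z = \Oh_Y \op \sL$: since $\s^*$ acts on each factor by the primitive root $\z$, it acts by $\z^n = 1$ on the product, and the subsheaf of $\pi_* \Oh_X$ fixed by $\s^*$ is precisely $q_* \Oh_Z$. Thus iterated multiplication gives a morphism $\mu_n \colon U_1^{\ot n} \to \Oh_Y \op \sL$, which by associativity and commutativity of $m$ is invariant under the symmetric group $S_n$; in particular its $\sL$-component $\mu_n^-$ is $S_n$-invariant and descends to ${\rm Sym}^n (U_1) \to \sL$. The second step is to rewrite $\mu_n$ through $\phi_{n-1}$: for local sections $u_1, \dots, u_n$ of $U_1$ one has $\mu_n(u_1 \ot \cdots \ot u_n) = (u_1 \cdots u_{n-1}) \cdot u_n$, where $u_1 \cdots u_{n-1}$ is a section of $U_{n-1}$; by the definition of $\phi_{n-1}$ and $\tau^* \circ \tau^* = {\rm Id}$ we have $u_1 \cdots u_{n-1} = \tau^* (\phi_{n-1}(u_1 \cdots u_{n-1}))$, so with the convention $m_{1,n-1}(s_1 \ot s_2) = s_1 \tau^*(s_2)$ for sections $s_1, s_2$ of $U_1$ this reads
$$
\mu_n(u_1 \ot \cdots \ot u_n) = m_{1,n-1}\big( u_n \ot \phi_{n-1}(u_1 \cdots u_{n-1}) \big) \, .
$$
Projecting onto the $\sL$-summand, and using that $m_{1,n-1}^-$ is skew and hence factors through $\we^2 (U_1)$, exactly as $\phi_{n-1}$ does under the identification $U_1 \cong U_1^\vee \ot \we^2 (U_1)$ of the statement, I obtain an identity of morphisms ${\rm Sym}^{n-1}(U_1) \ot U_1 \to \sL$ of the shape $\mu_n^- = \pm\, m_{1,n-1}^- \circ \phi_{n-1}$, the sign depending only on the chosen conventions and not on any permutation.

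To conclude I would invoke Proposition \ref{U3}(ii): $m_{1,n-1}^-$ is an isomorphism of $\we^2 (U_1)$ onto $\sL \ot \Oh_Y(-\D_p) \subset \sL$, in particular injective. Regarding all the maps above as morphisms out of $U_1^{\ot n}$, for every $\sigma \in S_n$ the morphisms $\phi_{n-1}$ and $\phi_{n-1}\circ\sigma$ have the same composite with $m_{1,n-1}^-$, namely $\pm \mu_n^-$, which is $\sigma$-invariant; injectivity of $m_{1,n-1}^-$ then forces $\phi_{n-1}\circ\sigma = \phi_{n-1}$. As $\phi_{n-1}$ is in any case symmetric in its first $n-1$ arguments, it is totally symmetric, hence factors through ${\rm Sym}^n(U_1)$ and defines $\Phi_{n-1}\colon {\rm Sym}^n(U_1) \to \we^2 (U_1)$. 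I expect the only real difficulty to be organizational — keeping straight the several identifications ($\tau^* \colon U_{n-i} \cong U_i$, the normalisation of $m_{i,n-i}$, and $U_1 \cong U_1^\vee \ot \we^2 (U_1)$) — while the one substantive ingredient, the injectivity of $m_{1,n-1}^-$, is already furnished by Proposition \ref{U3}(ii). As an independent check one may instead argue fibrewise over a general point of $Y \setminus \sB$, where $\pi_* \Oh_X$ is the regular representation $\CC[D_n]$ and the claim reduces to the elementary fact that multiplication by a section of $U_{n-2}$ followed by $\tau^*$ is a trace-free endomorphism of the rank two bundle $U_1$; the argument above bypasses that computation.
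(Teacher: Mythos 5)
Your argument is correct and is essentially the paper's own proof: both reduce the symmetry of $\phi_{n-1}$, via the isomorphism $m_{1,n-1}^- \colon \we^2(U_1) \xrightarrow{\ \sim\ } \sL \ot \Oh_Y(-\D_p)$ of Proposition \ref{U3}(ii), to an identity in $\sL$ that holds because the product $m$ on $\pi_*\Oh_X$ is associative and commutative and $\tau^*$ is an algebra involution. The only (cosmetic) difference is that you exhibit $m_{1,n-1}^-\circ\phi_{n-1}$ as the manifestly $S_n$-invariant $\sL$-component of the full $n$-fold product, whereas the paper verifies the single missing transposition of the last two arguments by the same computation.
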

\begin{proof}
By the commutativity of $m$, $\Phi_{n-1}$ is symmetric in the first $n-1$ entries. 
So, it is enough to prove that 
\begin{equation}\label{phin-1sym}
\Phi_{n-1}(u_1\ot \ldots \ot u_{n-1}\ot v) = \Phi_{n-1}(u_1\ot \ldots \ot u_{n-2}\ot v \ot u_{n-1}) \, ,
\end{equation}
for any $u_1, \ldots , u_{n-1} , v \in U_1$.
To this aim, let us consider the morphism 
$$
m_{1,n-1}^- \colon \we^2 (U_1) \to \mathcal{L}
$$
defined in  \eqref{min-ipm}. By Proposition \ref{U3},
$m_{1,n-1}^-$ identifies $\we^2 (U_1)$ with $\mathcal{L}(-\D_p)$, hence 
\eqref{phin-1sym} is equivalent to 
\begin{eqnarray}\label{phin-1sym+}
&&m_{1,n-1}^-  \left( \Phi_{n-1}(u_1\ot \ldots \ot u_{n-1}\ot v)\right) =  \nonumber \\
&&=m_{1,n-1}^- \left(  \Phi_{n-1}(u_1\ot \ldots \ot u_{n-2}\ot v \ot u_{n-1}) \right) \, .
\end{eqnarray}

From the explicit form of the isomorphism $U_1 \cong U_1^\vee \ot \we^2(U_1)$
recalled before, we have that 
$$
\Phi_{n-1}(u_1\ot \ldots \ot u_{n-1}\ot v) = \tau^*(m(u_1\ot \ldots \ot u_{n-1})) \we v \, .
$$ 
Furthermore, by definition, $m_{1,n-1}^- \left( \tau^*(m(u_1\ot \ldots \ot u_{n-1})) \we v \right)$
is the projection onto $\mathcal{L}$ of $m\left( \tau^*(m(u_1\ot \ldots \ot u_{n-1})) \ot \tau^*(v) \right)
\in \Oh_Y \op \mathcal{L}$. Since $\tau^*$ commutes with $m$, 
$$
m\left( \tau^*(m(u_1\ot \ldots \ot u_{n-1})) \ot \tau^*(v) \right) = \tau^* m\left( m(u_1\ot \ldots \ot u_{n-1})) \ot v \right) \, .
$$
Using the associativity and the commutativity of $m$, we deduce that
$$
\tau^* m\left( m(u_1\ot \ldots \ot u_{n-1})) \ot v \right) = \tau^* m\left( m(u_1\ot \ldots \ot u_{n-2} \ot v)) \ot u_{n-1} \right) \, .
$$ 
Projecting both sides of this equation onto $\mathcal{L}$ we obtain \eqref{phin-1sym+}, 
from which the statement follows.
\end{proof}

\begin{rem}
From Theorem \ref{STDn} it follows that, for a $D_n$-cover $\pi \colon X \to Y$,
the intermediate degree $n$ cover $f\colon W:= X/\langle \tau \rangle \to Y$
embeds  naturally in $\Oh_Y  \op U_1 \op \ldots \op U_{\frac{n-1}{2}}$
if $n$ is odd, respectively in  $\Oh_Y  \op U_1 \op \ldots \op U_{\frac{n}{2}-1} \op \mathcal{M}$
when $n$ is even. Then the symmetric
morphism $\Phi_{n-1} \colon {\rm Sym}^n(U_1) \to \we^2(U_1)$
determines $f\colon W \to Y$ birationally.
\end{rem}

\section{Simple and almost simple dihedral covers and their invariants}

In this section we define  the  \textit{simple dihedral covers}
and the \textit{almost simple dihedral covers},
and we investigate some of their properties. Throughout the section we assume that $Y$ is
a smooth variety.

Recall that, from Proposition \ref{dfe}, a $D_n$-field extension $\CC ( Y ) \subset E$ is given by
$E = \CC ( Y ) ( x )$, where $x$ satisfies an irreducible equation of the following form:
$$
x^{2n}-2ax^n + F^n = 0 \, ,
$$
with $a, F \in \CC ( Y )$. 
In order to construct varieties having $E$ as field of rational functions, one could proceed in the following way.
Consider a geometric line bundle $\mathbb{L} \to Y$, sections $a\in H^0 (Y, \LL^{\ot n})$,
$F\in H^0 (Y, \LL^{\ot 2})$, and define 
$$
X' := \{ v\in \LL \, | \, v^{2n} -2a v^n + F^n =0 \} \subset \LL \, ,
$$
where $v$ is a fibre coordinate of $\LL$. The singular locus of
$X'$ contains the locus $v=F=0$, so it has codimension $1$ and $X'$ is not normal. 

We slightly modify this construction in the following way.
Let $\LL \to Y$  be a geometric line bundle, 
$a\in H^0 (Y, \LL^{\ot n})$ and  $F\in H^0 (Y, \LL^{\ot 2})$.
Define $X \subset \LL \op \LL$ to be the set of $(u,v) \in \LL \op \LL$ such that the following 
equations are satisfied:
\begin{eqnarray}\label{sde}
\begin{cases}
uv & = F \\
u^n -2a + v^n & = 0 \, ,
\end{cases}
\end{eqnarray}
where $u$ (resp. $v$) is a fibre coordinate of the first (resp. second) copy of $\LL$ in $\LL \op \LL$. 
The dihedral group $D_n$ acts on $X$ via $\s (u,v) = (\z u , \z^{-1} v)$, $\tau (u,v) =(v,u)$,
where $\z \in \CC^*$ is a primitive root of $1$. 

\begin{theo}\label{sdn}
Let $Y$ be a smooth variety. Let $\LL \to Y$ be a geometric line bundle, $a\in H^0 (Y, \LL^{\ot n})$ and
$F\in H^0 (Y, \LL^{\ot 2})$, such that:
\begin{itemize}
\item[(i)]
the zero locus of $ a^2 - F^n \in H^0 (Y, \LL^{\ot 2n})$ is smooth in the open set $F\not= 0$;
\item[(ii)]
the divisors $\{ a=0\}$ and $\{F=0\}$ intersect each other transversely. 
\end{itemize}
Then the variety $X$ defined by the 
equations \eqref{sde} is smooth and the restriction
to $X$ of the fibre bundle projection $\LL \op \LL \to Y$  is a $D_n$-cover, $\pi \colon X \to Y$,
with branch divisor $\sB_\pi=\{ F^n - a^2 =0 \}$.
Furthermore, if $\{ a=0 \} \cap \{ F=0 \} \not= \emptyset$, then $X$ is irreducible.
\end{theo}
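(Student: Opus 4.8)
The plan is to fix a local trivialization of $\LL$ over a chart of $Y$ with coordinates $t_1,\dots,t_d$ (where $d=\dim Y$), so that $X$ becomes the closed subscheme of $\mathbb{A}^2_{u,v}\times\{t\}$ cut out by $g_1=uv-F(t)$ and $g_2=u^n+v^n-2a(t)$ (the two equations \eqref{sde}), and then to establish in turn: (a) $X$ is smooth; (b) $\pi$ is finite, flat, and of degree $2n$; (c) the prescribed $D_n$-action is effective and $X/D_n=Y$; (d) $\sB_\pi=\{F^n-a^2=0\}$; (e) $X$ is irreducible when $\{a=0\}\cap\{F=0\}\neq\emptyset$. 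All the real content sits in (a) and (e); the rest is formal once the local model and flatness are in hand.

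For (a) I would apply the Jacobian criterion inside the smooth ambient space $\operatorname{Tot}(\LL\op\LL)$, which has dimension $d+2$: as $X$ is cut by two equations it suffices to show the Jacobian of $(g_1,g_2)$ has rank $2$ at every point of $X$, for then $X$ is a complete intersection, smooth of pure dimension $d$. The $(u,v)$-block of the Jacobian is $\left(\begin{smallmatrix} v & u \\ nu^{n-1} & nv^{n-1}\end{smallmatrix}\right)$, with determinant $n(v^n-u^n)$. On $X$ one has $u^n+v^n=2a$ and $(uv)^n=F^n$, so $v^n=u^n$ forces $u^n=v^n=a$ and $a^2=F^n$; moreover over $\{F=0\}\setminus\{a=0\}$ one has $u^n\neq v^n$. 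Hence this $2\times2$ minor is already nonzero off $\pi^{-1}\{a^2=F^n\}$, and it remains to handle points with $a^2=F^n$. If such a point has $u=v=0$, then $a=F=0$ there and the Jacobian collapses to its $t$-columns $\bigl(-\partial_{t_i}F,\,-2\partial_{t_i}a\bigr)$, which have rank $2$ precisely by the transversality hypothesis (ii). If instead $u,v\neq0$, then $F\neq0$; here a short computation, using $u^n=v^n=a$ and $uv=F$, identifies the covector $\bigl(nu^{n-1}\partial_{t_i}F-2v\,\partial_{t_i}a\bigr)_i$ extracted from the first column and a $t$-column with $-\frac{v}{a}\,d(a^2-F^n)$, which is nonzero by hypothesis (i); so some $t$-column is independent of the first column and the rank is $2$. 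Therefore the Jacobian has rank $2$ everywhere and $X$ is smooth.

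For (b): multiplying $g_2$ by $u^n$ and using $(uv)^n=F^n$ shows that $u$ (and symmetrically $v$) satisfies the monic relation $u^{2n}-2au^n+F^n=0$ over $\hol_Y$, so $\pi_*\hol_X$ is a finite $\hol_Y$-algebra (in fact free on $1,u,\dots,u^{n-1},v,\dots,v^n$), and $\pi$ is finite and surjective. Since $X$ is smooth (hence Cohen--Macaulay) and pure of dimension $d$, and $Y$ is smooth (hence regular), miracle flatness --- the Corollary to Theorem~23.1 of \cite{matsumura} already used in Proposition~\ref{fcm} --- gives that $\pi$ is flat, so $\pi_*\hol_X$ is locally free of rank equal to the degree of $\pi$ over a point of $Y\setminus(\{F=0\}\cup\{a^2=F^n\})$, namely $2n$. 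For (c) one checks directly that $\s(u,v)=(\z u,\z^{-1}v)$ and $\tau(u,v)=(v,u)$ preserve $g_1,g_2$, satisfy $\s^n=\tau^2=(\s\tau)^2=1$, act effectively, and commute with $\pi$; and that $(\pi_*\hol_X)^{D_n}$, being in characteristic $0$ a direct summand of the locally free sheaf $\pi_*\hol_X$ via the averaging idempotent, is locally free of rank $1$ and agrees with $\hol_Y$ over the dense open where $\pi$ is étale, hence --- being a rank-$1$ torsion-free $\hol_Y$-algebra, integral over $\hol_Y$ and contained in $\CC(Y)$ --- equals $\hol_Y$ by normality of $Y$. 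Thus $\pi\colon X\to Y$ is a $D_n$-cover. For (d), the same analysis shows $\pi$ is étale exactly on $X\setminus\pi^{-1}\{a^2=F^n\}$; over every component of $\{a^2=F^n\}$ the fibre is nonreduced (eliminating $v$ gives $(u^n-a)^2=0$, or the single fat point $(0,0)$ when $a=F=0$), so $\pi$ ramifies there, whence $\sB_\pi=\{F^n-a^2=0\}$ as reduced divisors.

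For (e), fix $y_0\in\{a=0\}\cap\{F=0\}$; then $uv=0$ and $u^n+v^n=0$ over $y_0$ force $u=v=0$, so $\pi^{-1}(y_0)$ is the single closed point $(0,0)$, which is fixed by all of $D_n$. Since $X$ is smooth, its connected components coincide with its irreducible components; $D_n$ permutes them, and because $X/D_n=Y$ is an irreducible, hence connected, variety, $D_n$ must act transitively on the set of components. But the component containing the $D_n$-fixed point $(0,0)$ is $D_n$-invariant, so transitivity forces a single component; hence $X$ is irreducible. (Over $\CC$ one could instead deduce irreducibility from surjectivity of the monodromy $\pi_1(Y\setminus\sB_\pi)\to D_n$ forced by a totally ramified point, in the spirit of Zariski, but the connectedness argument avoids any local-monodromy bookkeeping.) I expect the main obstacle to be step (a) over the branch locus: recognizing that hypotheses (i) and (ii) are each tailored to exactly one of the two types of points of $X$ lying over $\{a^2=F^n\}$, and carrying out the linear-algebra identification of the relevant $2\times2$ minor with $d(a^2-F^n)$; everything after flatness is bookkeeping.
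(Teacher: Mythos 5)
Your proposal is correct, and for the core of the theorem (smoothness, finiteness, flatness, branch divisor) it is essentially the paper's argument reorganized: where the paper works chart by chart, eliminating $v$ over $F\neq 0$ to reduce to the hypersurface $u^{2n}-2au^n+F^n=0$ and invoking hypothesis (i) there, and treating $F=0$ separately with hypothesis (ii) at the points $u=v=0$, you run the Jacobian criterion directly on the complete intersection \eqref{sde} in the total space of $\LL\oplus\LL$; your identification of the $(u,t_i)$-minor with $-\tfrac{v}{a}\,\partial_{t_i}(a^2-F^n)$ at ramification points with $u,v\neq 0$ is exactly the computation hidden in the paper's $\partial\Phi/\partial y=\partial(F^n-a^2)/\partial y$, so the two hypotheses are consumed at the same loci in both proofs, and both get flatness from the same Cohen--Macaulay criterion (Proposition \ref{fcm}). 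You also spell out the quotient property $X/D_n\cong Y$ via the averaging idempotent and normality of $Y$, which the paper leaves implicit. The genuine divergence is the last claim: the paper proves irreducibility by showing the monodromy $\mu\colon\pi_1(Y\setminus\sB_\pi)\to D_n$ is surjective, getting a reflection from irreducibility of the intermediate double cover $Z$ and the rotation $\s$ from an explicit loop constructed in local coordinates $(a,F,y_3,\dots)$ near a point of $\{a=0\}\cap\{F=0\}$ (choosing $|u_0|\neq|v_0|$ so the loop avoids $\sB_\pi$); you instead observe that over such a point the fibre is the single $D_n$-fixed point $(0,0)$, so the connected component through it is $D_n$-stable, while connectedness of $X/D_n\cong Y$ forces $D_n$ to permute the components transitively, whence there is only one. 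Your argument is more elementary and purely algebraic (it avoids the Riemann-existence/path-lifting bookkeeping and works verbatim in the algebraic category over any base field of good characteristic), at the cost of leaning on the already-established identification $X/D_n\cong Y$; the paper's monodromy computation is heavier but yields the extra information, exploited later in Section 8, that $\pi_1(Y\setminus\sB_\pi)$ surjects onto $D_n$, which your connectedness argument only recovers a posteriori from the existence of the irreducible cover.
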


\begin{defin}
A {\bf simple $D_n$-cover} of $Y$ is the $D_n$-cover $\pi \colon X \to Y$
given as in Theorem \ref{sdn} by the restriction to $X$ of the fibre bundle projection $\LL \op \LL \to Y$.
\end{defin}

\begin{proof}(Of Theorem \ref{sdn}.)
Let us define $\Phi_1 :=  uv - F$ and  $\Phi_2 :=  u^n -2a + v^n$, so that the equations \eqref{sde} become
\begin{eqnarray*}
\begin{cases}
\Phi_1 &= 0  \\
\Phi_2 &=  0  \, .
\end{cases}
\end{eqnarray*}
Taking partial derivatives along the fibre coordinates we have:
$$
\left(
\begin{matrix}
\frac{\partial \Phi_1}{\partial u} & \frac{\partial \Phi_1}{\partial v} \\
& \\
\frac{\partial \Phi_2}{\partial u} & \frac{\partial \Phi_2}{\partial v}
\end{matrix}
\right) =
\left(
\begin{matrix}
v & u \\
& \\
nu^{n-1} & nv^{n-1}
\end{matrix}
\right) \, ,
$$
hence the ramification divisor of $\pi$ is $R=\{v^n - u^n =0\}$. Notice that 
$v^n - u^n$ is $\langle \s^* \rangle$-invariant, while $\tau^*(v^n - u^n)=-(v^n - u^n)$.
Therefore the branch locus $\sB_\pi$ is defined by the following equation:
\begin{eqnarray*}
(v^n - u^n)(u^n-v^n) &=& - (v^n - u^n)^2 \\
& = & - (v^n + u^n)^2 + 4 (uv)^n \\
& = & -4a^2 +4F^n \, .
\end{eqnarray*}

Let us consider first the restriction of $\pi$ over the points where $F\not= 0$.
In this locus, we have  $v= F/u$ and  so the equation  $\Phi_2 =0$ is equivalent to
$$
\Phi =0 \, , \quad \mbox{where} \quad \Phi: = u^{2n} -2au^n + F^n  \, .
$$
It follows  that $\pi$ is finite over the open set $F\not= 0$. To show the flatness of $\pi$
over $F \not= 0$, by Prop. \ref{fcm} it suffices to prove that $X$ is smooth there. To this aim consider
$$
\frac{\partial \Phi}{\partial u} =  2n u^{n-1} ( u^n -a ) \, ,
$$
which can vanish only if $u^n -a =0$, since $F\not= 0 \Rightarrow u\not= 0$.
On the other hand, over the locus $u^n -a=0$, 
\begin{eqnarray*}
\frac{\partial \Phi}{\partial y} & = & -2 u^n \frac{\partial a}{\partial y} + \frac{\partial F^n}{\partial y}  \\
&=& \frac{\partial (F^n -a^2)}{\partial y} \, ,
\end{eqnarray*}
where $y$ is any coordinate function on $Y$. Since $F^n -a^2=0$ is the branch divisor and, by hypothesis (i), 
it is smooth in $F\not= 0$, it follows that  $X$ is smooth there. 

We consider now the restriction of $\pi$ over the locus $F=0$ and
notice that here $uv=0$. If $u\not= 0$, then as before we see that $\pi$
is finite. The smoothness of $X$ at $(u\not=0 , v=0)$ follows from the fact that 
 $(u\not=0 , v=0) \not\in R$. The same argument
applies if $v\not= 0$. It remains the case where $u=v=0$, which implies
that $a=F=0$. By hypothesis (ii), $a$ and $F$ are part of a local coordinate system
near the points where $a=F=0$, in the analytic topology. Writing the equations for $X$ in these coordinates,
we see that $X$ is smooth at these points. There remains to show that $\pi$ is finite also
over the points $a=F=0$. Here $uv$ and $u^n +v^n$ are precisely the $D_n$-invariants,
and $\Oh_X$ is a free $\Oh_Y$-module generated by 
$1, u^n - v^n , u, u^2, \ldots , u^{n-1} , v, v^2 , \ldots , v^{n-1} $.

Let us now assume that $\{ a=0 \} \cap \{ F=0 \} \not= \emptyset$.
The irreducibility of $X$ is equivalent to the surjectivity of the monodromy
of the cover, $\mu \colon \pi_1 (Y\setminus \sB_\pi) \to D_n$.
Since the branch divisor of the intermediate cover $q\colon Z \to Y$ coincides with
$\sB_\pi$, it is reduced,  so $Z$ is irreducible and hence $\operatorname{Im} (\mu)$
contains a reflection $\s^i \tau$. To see that also $\s \in \operatorname{Im} (\mu )$, 
let us  choose local  coordinates $(a,F, y_3, \ldots , y_{\dim (Y )})$
for $Y$ at a point in $\{ a=0 \} \cap \{ F=0 \}$ and  
consider the path in $X$
$$
\gamma (t ) = \left(\exp \left( \frac{2\pi \sqrt{-1}}{n}t \right)u_0,
\exp \left( - \frac{2\pi \sqrt{-1}}{n}t\right)v_0, F, a, y_3 , \ldots , y_{\dim (Y )}
\right) \, ,
$$
where, $(u_0 , v_0 , F, a, y_3 , \ldots , y_{\dim (Y)}) \in X$.
Then, $\gamma ( 1) = \s \gamma ( 0 )$, and $\pi \circ \gamma $
is a loop contained in $Y\setminus \sB_\pi$, if and only if 
$$
\exp ( {2\pi \sqrt{-1}}t )u_0^n - \exp ( - {2\pi \sqrt{-1}}t)v_0^n \not= 0 \, , \quad \forall \, t \, .
$$
Since this condition can be easily achieved, e.g. by choosing 
$u_0, v_0$ with $|u_0 | \not= | v_0|$, the claim follows.
\end{proof}

Notice that the hypothesis (i) in the previous theorem is general, as it follows from 
the next proposition.
\begin{prop}
Under the same notation as before, for a general choice of  $a\in H^0 (Y, \LL^{\ot n})$ and
$F\in H^0 (Y, \LL^{\ot 2})$, $F^n -a^2 =0$ is smooth over $F\not= 0$.
\end{prop}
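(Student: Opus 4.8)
The plan is to exhibit the divisors $\{a^2-F^n=0\}\cap\{F\neq 0\}$, as $(a,F)$ varies, as the fibres of a single morphism whose total space is smooth, and then to invoke generic smoothness (we work over $\CC$). Put $V:=H^0(Y,\LL^{\ot n})\times H^0(Y,\LL^{\ot 2})$, let $\Omega\subset Y\times V$ be the open subset $\{(p,a,F):F(p)\neq 0\}$, and inside $\Omega$ consider the incidence subscheme
$$
\mathcal X^{\circ}:=\{(p,a,F)\in\Omega \,:\, a(p)^2=F(p)^n\}\,.
$$
Over $\Omega$ the section $F^n$ of $\LL^{\ot 2n}$ (pulled back to $Y\times V$) is nowhere zero, so $\mathcal X^{\circ}$ is the level set $\psi^{-1}(1)$ of the regular function $\psi:=a^2/F^n\colon\Omega\to\mathbb A^1$; the fibre of the projection $\mathcal X^{\circ}\to V$ over $(a,F)$ is precisely the zero scheme of $a^2-F^n$ intersected with $\{F\neq 0\}\subset Y$, which is the object whose smoothness is claimed for general $(a,F)$.

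The first step --- and essentially the only one with any content --- is to check that $\mathcal X^{\circ}$ is smooth. Since $Y$ is smooth and $V$ is an affine space, $\Omega$ is smooth, so it is enough that $d\psi\neq 0$ everywhere on $\mathcal X^{\circ}$. This is cheap: differentiating $\psi$ along the tangent direction at $(p,a,F)$ obtained by rescaling $F\mapsto(1+t)F$ yields $\frac{d}{dt}\big|_{0}\,(1+t)^{-n}\psi=-n\psi$, which equals $-n\neq 0$ on $\mathcal X^{\circ}$. Hence $1$ is a regular value of $\psi$ and $\mathcal X^{\circ}$ is a smooth variety; note that no positivity hypothesis on $\LL$ is needed here.

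The second step is to apply generic smoothness (Sard's lemma, characteristic $0$) to $\mathcal X^{\circ}\to V$: there is a dense Zariski-open $V_0\subseteq V$ over which the scheme-theoretic fibre is smooth. Unwinding what this fibre is, we obtain that for $(a,F)\in V_0$ the locus $\{F^n-a^2=0\}\cap\{F\neq 0\}$ is smooth, which is the proposition. (If some irreducible component of $\mathcal X^{\circ}$ does not dominate $V$, its fibre is empty over a dense open; one intersects the finitely many dense opens thus obtained.)

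So there is no real obstacle: once the incidence space $\mathcal X^{\circ}$ is set up, smoothness of its total space is a one-line computation and the conclusion is formal --- the main thing to get right is just the identification of the fibre of $\mathcal X^{\circ}\to V$ with the object in the statement. If one wants an argument valid in any characteristic prime to $2n$, one replaces generic smoothness by the classical dimension count on the larger incidence variety which also imposes $d(a^2-F^n)_p=0$: for fixed $p$ the two conditions ``$a(p)^2=F(p)^n$'' and ``$da_p=\tfrac{nF(p)^{n-1}}{2a(p)}\,dF_p$'' cut the fibre over $p$ down in codimension $1+\dim Y$ (using that $H^0(Y,\LL^{\ot n})$ generates $1$-jets), so that variety has dimension $\dim V-1$ and cannot surject onto $V$.
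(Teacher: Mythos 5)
Your argument is correct, and it takes a different (more self-contained) route than the paper: the paper disposes of this proposition in one line, citing Bertini's theorem as in \cite{GH}, whereas you prove the needed Bertini-type statement from scratch via the incidence variety $\mathcal X^{\circ}\subset\Omega$ and generic smoothness in characteristic $0$. Your setup is sound: $\psi=a^2/F^n$ is a genuine regular function on $\Omega$, the rescaling direction $F\mapsto(1+t)F$ shows $d\psi=-n\psi\neq 0$ along $\psi^{-1}(1)$, and the scheme-theoretic fibre of $\mathcal X^{\circ}\to V$ over $(a,F)$ is exactly the zero scheme of $a^2-F^n$ on $\{F\neq 0\}$ (since $F^n$ is a unit there), so Sard/generic smoothness, applied componentwise as you do, gives the claim. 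What your route buys: the family $\{a^2-F^n=0\}$ is \emph{not} a linear system in $(a,F)$, so the paper's appeal to classical Bertini really requires a small intermediate reduction that is left implicit, while your incidence argument handles the nonlinear family directly and, as you observe, needs no positivity of $\LL$; what the paper's route buys is brevity. Two minor caveats, neither a gap in your main argument: both your proof and the paper's statement implicitly take $V=H^0(Y,\LL^{\ot n})\times H^0(Y,\LL^{\ot 2})$ to be finite dimensional (otherwise ``general choice'' needs interpretation), and your closing aside on characteristic $p$ coprime to $2n$ genuinely uses the extra hypothesis that $\LL^{\ot n}$ generates $1$-jets, which is an assumption beyond those of the proposition, so it should be flagged as such rather than as a drop-in replacement.
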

\begin{proof}
This follows directly from Bertini's theorem (see e.g. \cite{GH}).
\end{proof}

\subsection{Invariants of simple dihedral covers} 

We first determine  the eigensheaves decomposition of $\pi_* \Oh_X$, where 
$\pi \colon X \to Y$ is a simple $D_n$-cover. Let $\LL , F$ and $a$ be as in the statement of 
Thm. \ref{sdn}, and let us denote with $\Oh_Y ( L )$ the sheaf of sections of $\LL$.
Then, over an open subset $S\subset Y$ where $\LL$ is trivial, from \eqref{sde} we deduce that
\begin{eqnarray*}
{\pi_* \Oh_X}_{|S} &\cong& \frac{\Oh_S [\lambda ,\mu]}{\left( \lambda \mu - F , \lambda^n -2a + \mu^n \right)} \\
&\cong& \Oh_S  \op \Oh_S (\la^n - \mu^n ) \op_{i=1}^{n-1} \left(  \Oh_S \la^i \op \Oh_S \mu^{ n-i} \right)  \, ,
\end{eqnarray*}
where $\la$ and $\mu$ are fibre coordinates on the dual $\LL^\vee$
of $\LL$. This local description  globalizes and we 
have:
\begin{equation}\label{pi*ox}
{\pi_* \Oh_X} \cong \op_{i=0}^{n-1} \big[ \Oh_Y (-iL) \op \Oh_Y (-(n-i)L) \big] \, .
\end{equation}
Notice that, for the sheaves $\sL$ and $U_i$ introduced in Section 5.2, we have:
$$
\sL = \Oh_Y (-nL) \, , \quad U_i = \Oh_Y (-iL) \op \Oh_Y (-(n-i)L) \, .
$$

To determine the canonical bundle of $X$, we use the canonical bundle formula for branched covers:
$$
\om_X = \pi^* \om_Y \ot \Oh_X ( R ) \, ,
$$
where $R$ is the ramification divisor of $\pi$. If $\pi \colon X \to Y$ is a simple $D_n$-cover,
then $R= \{ u^n - v^n = 0 \}$ (see the proof of Thm. \ref{sdn}). 
Since $u^n - v^n$ is a generator of the eigensheaf corresponding to the irreducible  representation 
of $D_n$ with character $\chi_2$, $\Oh_X ( R ) = \pi^* \mathcal{L}^\vee = \pi^* \Oh_Y (nL)$.
Hence
\begin{eqnarray}\label{omegax}
\om_X = \pi^* \left( \om_Y (nL) \right) \, .
\end{eqnarray}
In particular, if $\dim (X ) = 2$, then the self-intersection of a canonical divisor of $X$ is:
\begin{equation}\label{K2}
K_X^2 = 2n \left( K_Y + nL \right)^2 \, .
\end{equation}

To compute the Euler characteristic $\chi ( \Oh_X )$, by the finiteness 
of $\pi$  we have that 
$\chi ( \Oh_X ) = \chi ( \pi_* \Oh_X )$. In particular, if $\dim (X)=2$, 
the Riemann-Roch theorem for surfaces yields the following formula:
\begin{equation}\label{chiox2}
\chi (\Oh_X ) = 2n \chi (\Oh_Y) + \frac{1}{6}n(2n^2 +1) L \cdot L +
\frac{1}{2}n^2 L \cdot K_Y \, .
\end{equation}

\subsection{Almost simple dihedral covers}
In this section we define the {almost simple dihedral covers},
which can be seen as projectivizations of the simple dihedral covers. 
The construction follows closely the one of 
almost simple cyclic covers, introduced and studied in \cite{ENR}.

{
The covering space $X$
of an almost simple dihedral cover $\pi \colon X \to Y$, over the smooth variety $Y$, 
is defined as a complete 
intersection in the fibre product $\PP_1 \times_Y \PP_2 \to Y$
of two $\PP^1$-bundles over $Y$ in the following way.
Let $\LL \to Y$ be a geometric line bundle
 and
$\ul{\CC}=Y\times \CC \to Y$ be the trivial geometric line bundle.
For each $i=1,2$, 
let $\PP_i := \PP(\ul{\CC} \op \LL)$ be  the  $\PP^1$-bundle associated to  
$\ul{\CC} \op \LL \to Y$, 
and let $p_i \colon \PP_i \to Y$ be the 
corresponding projection.  Here, each fibre of $p_i$ is the projective space 
of $1$-dimensional subspaces in the corresponding fibre of $\ul{\CC} \op \LL$,
hence, using the notation in \cite{Hart},  $\PP_i = 
{\rm{Proj}}\left( \Oh_Y \op \Oh_Y(-L)\right)$ and in particular 
$(p_i)_* \Oh_{\PP_i}(1) = \Oh_Y \op \Oh_Y(-L)$,
where $L$ is a divisor in $Y$ such that $\Oh_Y (L)$ 
is the sheaf of sections of $\LL$ (hence $\LL = {\rm Spec}({\rm Sym}(\Oh_Y(-L)))$).
On each $\PP_i$ there  are projective fibre coordinates:
$[u_0: u_1]$ for $\PP_1$, $[v_0: v_1]$ for $\PP_2$.
$u_0, u_1$ are defined as follows ($v_0,v_1$ are defined in the same way): 
let $U\subset p_1^*(\ul{\CC} \op \LL)$
be the universal sub-bundle, then $u_0 \colon U \to p_1^*(\ul{\CC})$
is the composition of the inclusion with the projection; similarly for
$u_1 \colon U \to p_1^*\LL$. Notice that, since  $U= {\rm Spec}({\rm Sym}\, \Oh_{\PP_1}(1))$,
then $u_0 \in H^0(\PP_1, \Oh_{\PP_1}(1))$ and 
$u_1 \in H^0(\PP_1, \Oh_{\PP_1}(1)\ot p_1^*\Oh_Y(L))$.
}

Let 
$F\in H^0(Y, \LL^{\ot 2})$ and
let $A_0 , A_\infty$ be effective divisors 
in $Y$ such that
\begin{equation*}
A_0 \equiv nL + A_\infty \, .
\end{equation*}
Let $a_\infty \in H^0(Y,\Oh_Y(A_\infty))$ and $a_0 \in H^0(Y,\Oh_Y(A_0))$ be
such that $A_\infty = \{ a_\infty =0 \}$ and $A_0= \{ a_0 = 0 \}$.

Consider the subvariety $X\subset \PP_1 \times_Y \PP_2= \PP ( \ul{\CC} \op \LL) \times_Y \PP ( \ul{\CC} \op \LL)$ 
defined by the following equations:
\begin{eqnarray}\label{asde}
\begin{cases}
\Phi_1 & = 0 \\
\Phi_2 & = 0 \, ,
\end{cases}
\end{eqnarray}
where 
$$
\Phi_1:= u_1 v_1 - u_0 v_0 F \, , \quad
\Phi_2:=a_\infty v_1^n u_0^n -2a_0 v_0^n u_0^n + a_\infty v_0^n u_1^n \, ,
$$
 $u_0, u_1, v_0, v_1$ are  defined above and
$( [u_0 : u_1], [v_0 : v_1])$ are projective fibre coordinates on
$\PP ( \ul{\CC} \op \LL) \times_Y \PP ( \ul{\CC} \op \LL)$.

Notice that the dihedral group $D_n$ acts on $X$ in the following way: 
\begin{eqnarray*}
\s ( [u_0 : u_1], [v_0 : v_1]) &=& ([u_0 : \z u_1], [v_0 : \z^{-1} v_1]) \, ,\\
\tau ([u_0 : u_1], [v_0 : v_1]) &=& ([v_0 : v_1], [u_0 : u_1]) \, ,
\end{eqnarray*}
where $\z \in \CC^*$ is a primitive $n$-th root of $1$. 
Then we have the following result.
\begin{theo}\label{asdn}
Let $Y$ be a smooth variety, $\LL \to Y$ be a geometric line bundle,
$\ul{\CC}=Y\times \CC \to Y$ be the trivial geometric line bundle, and 
$F\in H^0(Y, \LL^{\ot 2})$. 
Let $A_0 , A_\infty$ be effective divisors 
in $Y$ such that
\begin{equation*}
A_0 \equiv nL + A_\infty \, ,
\end{equation*}
where $L$ is a divisor in $Y$ with  $\Oh_Y (L)$ being the sheaf of sections of $\LL$.
Let $a_\infty \in H^0(Y,\Oh_Y(A_\infty))$ and $a_0 \in H^0(Y,\Oh_Y(A_0))$ be
such that $A_\infty = \{ a_\infty =0 \}$ and $A_0= \{ a_0 = 0 \}$.

Assume that the following conditions are satisfied:
\begin{itemize}
\item[(i)]
$A_0$ intersects $\{ F=0\}$ transversely, $A_0 \cap A_\infty = \emptyset$ and
$A_\infty$ is smooth;
\item[(ii)]
the locus $\{ a_0^2 - F^n a_\infty^2 =0\}$ is smooth on the open set $F\not= 0$.
\end{itemize}
Then $X$, defined by \eqref{asde}, is smooth and the restriction of the projection 
$\PP (\ul{\CC} \op \LL) \times_Y \PP (\ul{\CC} \op \LL)\to Y$ to $X$ is a $D_n$-cover 
$\pi \colon X \to Y$ with branch divisor 
$\sB_\pi = \{ a_\infty (a_0^2 - a_\infty^2 F^n)=0\}$.
Furthermore, if $A_0 \cap \{ F=0 \} \not= \emptyset$, then $X$ is irreducible.
\end{theo}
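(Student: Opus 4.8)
The plan is to reduce most of the theorem to Theorem \ref{sdn} by restricting over the open set $V_\infty := Y\setminus A_\infty$, where $\pi$ becomes a genuine \emph{simple} dihedral cover, and to treat the fibres of $\pi$ over $A_\infty$ by a direct local computation, much as for the almost simple cyclic covers of \cite{ENR}. The starting observation is that on $\PP_i$ the chart $\{u_0\neq 0\}$ (resp.\ $\{v_0\neq 0\}$) is the total space of $\LL$ with affine fibre coordinate $u:=u_1/u_0$ (resp.\ $v:=v_1/v_0$), so on $\{u_0\neq 0\}\cap\{v_0\neq 0\}$ the equations \eqref{asde}, after dividing $\Phi_1$ by $u_0v_0$ and $\Phi_2$ by $u_0^nv_0^n$, read $uv=F$ and $a_\infty(u^n+v^n)=2a_0$; moreover the $D_n$-action on $\PP_1\times_Y\PP_2$ restricts in this chart to $\s(u,v)=(\z u,\z^{-1}v)$, $\tau(u,v)=(v,u)$.

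First I would treat $V_\infty$. A short case analysis on the vanishing of the homogeneous coordinates shows that over $V_\infty$ (where $a_\infty$ is invertible) every point of $X$ lies in the chart $\{u_0\neq 0\}\cap\{v_0\neq 0\}$: if $u_0=0$ then $\Phi_1=0$ forces $[v_0:v_1]=[1:0]$, and then $\Phi_2=a_\infty\neq 0$; symmetrically if $v_0=0$. In that chart, dividing the second equation by the unit $a_\infty$ gives exactly \eqref{sde} for $\LL|_{V_\infty}$, the section $a:=a_0/a_\infty$ (a regular section of $\Oh_Y(nL)$ over $V_\infty$, using $A_0\cap A_\infty=\emptyset$, hence $A_0\subset V_\infty$), and $F|_{V_\infty}$, with matching $D_n$-action. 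Since $\{a=0\}\cap V_\infty=A_0$ and $\{a^2-F^n=0\}\cap V_\infty=\{a_0^2-a_\infty^2F^n=0\}\cap V_\infty$, hypotheses (i) and (ii) restrict precisely to the two hypotheses of Theorem \ref{sdn} over $V_\infty$. Thus $\pi^{-1}(V_\infty)$ is smooth, $\pi|_{V_\infty}$ is a $D_n$-cover with branch divisor $\{a_0^2-a_\infty^2F^n=0\}\cap V_\infty$, and it is irreducible whenever $A_0\cap\{F=0\}\neq\emptyset$ (this set lies in $V_\infty$).

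Next I would analyse $\pi$ near $A_\infty$. If $a_\infty(y)=0$, then $a_0(y)\neq 0$, so on the fibre $\Phi_2=-2a_0u_0^nv_0^n$, and together with $\Phi_1=0$ one finds $\pi^{-1}(y)=\{s_1,s_2\}$ with $s_1=([0:1],[1:0])$, $s_2=([1:0],[0:1])$, exchanged by $\tau$ and fixed by $\s$. To see that $X$ is smooth at $s_1$, pass to affine coordinates $\tilde u:=u_0/u_1$, $v:=v_1/v_0$ and to local coordinates $y_1,\dots,y_d$ on $Y$ ($d:=\dim Y$) with $y_1=a_\infty$ a local equation of $A_\infty$ (possible since $A_\infty$ is smooth): then $\Phi_1=v-\tilde uF$, so near $s_1$ the locus $\{\Phi_1=0\}$ is smooth with coordinates $(\tilde u,y_1,\dots,y_d)$, and $\Phi_2$ restricts to $G:=a_\infty(\tilde u^{2n}F^n+1)-2a_0\tilde u^n$; at $s_1$ (where $\tilde u=0$, $a_\infty=0$) a short computation gives $dG\neq 0$, using that $da_\infty\neq 0$ and $a_0\neq 0$ near $A_\infty$, so $X$ is smooth near $s_1$, and $s_2$ is handled by $\tau$-symmetry. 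Every other point of $X$ lies over $V_\infty$, so $X$ is smooth. Moreover $\pi$ is proper and has finite fibres, hence is finite; therefore $X$ has pure dimension $d$, is a complete intersection in $\PP_1\times_Y\PP_2$, and $\pi$ is flat (miracle flatness, cf.\ Proposition \ref{fcm}). Finally, solving $G=0$ for $y_1$ near $s_1$, the Jacobian of $\pi$ in the induced coordinates $(\tilde u,y_2,\dots,y_d)$ equals, up to a unit, $\tilde u^{n-1}$; hence the reduced ramification locus near $s_1$ is $\{\tilde u=0\}$, which maps onto $A_\infty$, and so $A_\infty\subset\sB_\pi$, with $\pi$ totally ramified of order $n$ there.

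Assembling: since $a_0^2-a_\infty^2F^n=a_0^2\neq 0$ along $A_\infty$, the branch locus over $V_\infty$ and the divisor $A_\infty$ are disjoint, so the reduced branch divisor is $A_\infty\cup\{a_0^2-a_\infty^2F^n=0\}=\{a_\infty(a_0^2-a_\infty^2F^n)=0\}$. The map $\pi$ is $D_n$-invariant (one checks $\Phi_i\circ\s=\Phi_i$, using $\z^n=1$, and $\Phi_i\circ\tau=\Phi_i$), the action is effective since it is so over $V_\infty$, and $X/D_n\to Y$ is finite and birational, hence an isomorphism because $Y$ is normal; so $\pi$ is a $D_n$-cover. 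If $A_0\cap\{F=0\}\neq\emptyset$, then $\pi^{-1}(V_\infty)$ is irreducible, and $X$ being smooth its irreducible components are disjoint; since $\pi^{-1}(A_\infty)$ has dimension $d-1<d$, the open set $\pi^{-1}(V_\infty)$ is dense and $X$ is irreducible. I expect the main obstacle to be the local analysis at $s_1,s_2$: one must check that there the cover is totally ramified of the expected order $n$ and that its smoothness requires only the smoothness of $A_\infty$ and the disjointness $A_0\cap A_\infty=\emptyset$, the transversality of $A_0$ and $\{F=0\}$ and hypothesis (ii) entering solely through the reduction to Theorem \ref{sdn} over $V_\infty$; some care is also needed in phrasing the branch divisor as a reduced divisor.
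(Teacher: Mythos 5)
Your proposal is correct and follows essentially the same route as the paper: away from $A_\infty$ you reduce to Theorem \ref{sdn} via the chart $u_0v_0\neq 0$ with $a=a_0/a_\infty$, and at the two points of each fibre over $A_\infty$ you do the same local computation (the function $G=a_\infty(\tilde u^{2n}F^n+1)-2a_0\tilde u^n$, with $da_\infty\neq 0$ by smoothness of $A_\infty$ and $a_0\neq 0$ by $A_0\cap A_\infty=\emptyset$) that the paper carries out in the chart $v_1u_0\neq 0$. Your organization by base loci rather than by charts, and your extra details (explicit order-$n$ ramification along $A_\infty$, the dimension/density argument for irreducibility, properness plus quasi-finiteness for finiteness) are only minor elaborations of the paper's terser statements, not a different method.
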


\begin{defin}
An {\bf almost simple $D_n$-cover} of $Y$ is the $D_n$-cover $\pi \colon X \to Y$
given as in Theorem \ref{asdn} by the restriction to $X$ of the  
fibre bundle projection $\PP (\ul{\CC} \op \LL) \times_Y \PP (\ul{\CC} \op \LL)\to Y$.
\end{defin}

\begin{proof} 
We first prove that the intersection of $X$ with each one  of the  standard open subsets
$v_0 u_0 \not= 0$, $v_1 u_0 \not= 0$ and $v_1 u_1 \not= 0$ is a smooth variety. 
This suffices because $\tau (\{ v_0 u_1 \not= 0 \} ) = \{v_1 u_0 \not= 0\}$.

To this aim, observe  that the restriction of $\pi$ to the locus where $u_0v_0 \not= 0$
is a simple $D_n$-cover. Indeed, setting $u=u_1/u_0$ and $v=v_1/v_0$, 
the equations \eqref{asde} reduce to 
\begin{eqnarray*}
\begin{cases}
u v -  F & = 0 \\
a_\infty v^n  -2a_0  + a_\infty  u^n & = 0 \, .
\end{cases}
\end{eqnarray*}
Since $A_\infty \cap A_0 = \emptyset$, on this locus $a_\infty$ never vanishes,
and setting $a=a_0/a_\infty$ we obtain the equations \eqref{sde}. Under our hypotheses Theorem 
\ref{sdn} applies, hence $X$ is smooth if $u_0v_0 \not= 0$.

Consider now the locus $v_1 u_1 \not= 0$ and observe that there $F$ never vanishes. 
Let us define $u=u_0/u_1$, $v=v_0/v_1$ and
$g=1/F$. Then the equations \eqref{asde} become
\begin{eqnarray*}
\begin{cases}
u v  & = g \\
a_\infty u^n  -2a_0 g^n  + a_\infty  v^n & = 0 \, .
\end{cases}
\end{eqnarray*}
Since $A_\infty \cap A_0 = \emptyset$, $a_\infty$ never vanishes in this locus.
So, defining   $a=a_0/a_\infty$, we get the following equations for $X$: 
$$
u^{2n} -2ag^n u^n + g^n =0 \, ,  \quad v=g/u \, .
$$
Notice that this is the equation of a $D_n$-cover, with action $\s (u) = \z u$, $\tau (u) = g/u$.
Since $g\not= 0$ everywhere, $\langle \s \rangle \cong \ZZ/n\ZZ$ acts freely
and so $X$ is smooth if and only if the intermediate double cover is smooth.
The intermediate double cover has equation $z^2 -2ag^n z +g^n =0$ and its
branch divisor is $\{g^n( g^na^2 - 1) =0\} = \{ a^2 - F^n =0\}$. 
By hypothesis (ii) this locus is smooth, so $X$ is smooth where $v_1 u_1 \not= 0$.

It remains to consider the case where $v_1u_0 \not= 0$.  Here, setting $v_1 = 1 = u_0$, the equations \eqref{asde} reduce to
\begin{eqnarray*}
\begin{cases}
u_1  -  v_0 F & = 0 \\
a_\infty   -2a_0 v_0^n  + a_\infty v_0^n u_1^n & = 0 \, .
\end{cases}
\end{eqnarray*}
Substituting $u_1  =  v_0 F$ in the second equation we obtain: 
$$
a_\infty   -2a_0 v_0^n  + a_\infty F^n v_0^{2n}  = 0 \, .
$$
Notice that, if $v_0 \not= 0$, then we have already seen that $X$ is smooth.
On the other hand, when $v_0 = 0$, we must have $a_\infty =0$, and then the smoothness of $A_\infty$
implies that of $X$.

The finiteness of $\pi$ follows from the fact that $\pi$ has finite fibre and
its restriction to each open subset of $Y$ where $\LL$ is trivial
is projective (\cite{Hart}, III, Ex. 11.1).

To describe the branch divisor, recall that the restriction of $\pi$ to the open set $u_0v_0 \not= 0$
is a simple dihedral cover and so its branch divisor is  $F^n - a^2 =0$, where 
 $a:=a_0/a_\infty$. Notice that, if $u_0v_0 \not= 0$, then $a_\infty \not= 0$
everywhere, on the other hand, if $u_0v_0 = 0$, then $u_1v_1 =0$ by \eqref{asde}, so
$X\cap \{ u_0v_0 = 0 \} \subset \{u_1v_0\not= 0 \} \cup \{u_0v_1\not= 0 \}$.
The claim now follows from the previous explicit description of $\pi$ on $u_0v_1\not= 0$.

Finally, if $A_0 \cap \{ F=0 \} \not= \emptyset$, then $X$ is irreducible 
since the open subset $X\cap \{ u_0v_0 \not= 0\}$ is irreducible by Theorem 
\ref{sdn}.
\end{proof}

{
The invariants of almost simple $D_n$-covers $\pi \colon X \to Y$
can be computed in the same way as in the simple case,
once  a description of $\pi_* \Oh_X$ in terms of   $L$ and $A_\infty$ is provided.
In the remaining part of this section we show
that there is an isomorphism as  follows:
\begin{eqnarray}\label{p*oxas}
\pi_* \Oh_X &\cong& \Oh_Y \op \Oh_Y(-nL -A_\infty) \op \\
&&\left( \op_{i=1}^{n-1} [ \Oh_Y(-iL) \op \Oh_Y(-(n-i)L)]\right)(-A_\infty)  \, . \nonumber
\end{eqnarray}
Notice that, on the open subset $Y\setminus A_\infty$ where $\pi$ is a simple cover, 
the previous formula reduces to \eqref{pi*ox}.

To prove \eqref{p*oxas}, we consider the following Cartesian diagram 
$$
\begin{CD}
\sQ @>{\tilde{p}_2}>> \PP_2 \\
@V{\tilde{p}_1}VV @VV{p_2}V \\
\PP_1 @>{p_1}>> Y 
\end{CD} 
$$
where $\sQ:=\PP_1 \times_Y \PP_2$ is a $\PP^1 \times \PP^1$-bundle
with projection $p\colon \sQ \to Y$, $p=p_i \circ \tilde{p}_i$, $\forall \, i=1,2$.
Recall that the Picard group of $\sQ$ is isomorphic to ${\rm Pic}(Y) \times \ZZ^{\oplus 2}$
via the usual isomorphism that sends $(\sL , m, n )\in {\rm Pic}(Y) \times \ZZ^{\oplus 2}$ 
to $p^*\sL \ot \Oh_{\sQ}(m,n):= p^*\sL \ot \Oh_{\PP_1}(m) \boxtimes \Oh_{\PP_2}(n) $.

Let us define $D_i:= \{ \Phi_i = 0 \}$ to be the divisor of $\sQ$ given by the equation $\Phi_i=0$
in  \eqref{asde}, for $i=1,2$, and  notice that 
\begin{eqnarray}\label{Phi12}
&&\Phi_1 \in H^0 \left( \sQ, \Oh_{\sQ}(1,1) \ot p^* \Oh_Y(2L) \right) \, ,
 \nonumber \\
&&\Phi_2 \in H^0 \left( \sQ, \Oh_{\sQ}(n,n) \ot p^* \Oh_Y(nL +A_\infty) \right) \, .
\end{eqnarray}

Then we consider the usual short exact sequence 
\begin{equation}\label{IX}
0\to \sI_X \to \Oh_\sQ \to \Oh_X \to 0 \, ,
\end{equation}
where $\sI_X$ is the sheaf of ideals of $X$.
Since $X$ is the complete intersection of two divisors in $\sQ$,
the Koszul resolution of $\sI_X$ is as follows:
\begin{equation}\label{Koszul}
0\to \Oh_\sQ (-D_1 - D_2) \to \Oh_\sQ(-D_1) \op \Oh_\sQ(-D_2) \to \sI_X \to 0 \, .
\end{equation}

Applying $p_*$ to \eqref{IX} we obtain the following split short exact sequence: 
$$
0\to \Oh_Y \cong p_* \Oh_\sQ \to p_* \Oh_X \to R^1p_* \sI_X \to 0 \, ,
$$
where we have used the fact that $p_* \sI_X = 0 = R^1p_* \Oh_\sQ$.

In order to compute $R^1p_* \sI_X$, we apply $p_*$ to \eqref{Koszul}
and we obtain the following  exact sequence:
$$
0\to R^1p_* \sI_X \to R^2 p_* \Oh_\sQ (-D_1 - D_2) \to R^2 p_* 
\left( \Oh_\sQ(-D_1) \op \Oh_\sQ(-D_2) \right) \, ,
$$
where we have used the equality $ R^1 p_* 
\left( \Oh_\sQ(-D_1) \op \Oh_\sQ(-D_2) \right)=0$ that follows from 
the K\"unneth formula. Furthermore, since 
$ R^2 p_* \Oh_\sQ(-D_1) =0$, we have that
$$
R^1p_* \sI_X = \ker \big[ R^2 p_* \Oh_\sQ (-D_1 - D_2) \stackrel{\mu}{\to} R^2 p_* 
 \Oh_\sQ(-D_2) \big] \, ,
$$
where $\mu$ is induced by the morphism $\Oh_\sQ (-D_1 - D_2) \to  \Oh_\sQ (-D_2 )$
in the Koszul resolution of $\sI_X$,  given by $\psi_1 \we \psi_2 \mapsto \psi_1 (\Phi_1) \psi_2$,
for $\psi_i \in \Oh_\sQ (-D_i)$.

To describe $\mu$ explicitly, we use the following isomorphisms:
\begin{eqnarray*}
R^2 p_* \Oh_\sQ (-D_1 - D_2) &\cong& S^{n-1} \left( \Oh_Y \op \Oh_Y(L) \right)^{\ot 2} \ot \Oh_Y (-nL - A_\infty) \, , \\
R^2 p_* \Oh_\sQ (- D_2) &\cong& S^{n-2} \left( \Oh_Y \op \Oh_Y(L) \right)^{\ot 2} \ot \Oh_Y (-(n-2)L - A_\infty) \, ,
\end{eqnarray*}
that  follow applying the projection formula, K\"unneth formula, and the standard isomorphisms (see e.g. \cite[Ex. 8.4, III]{Hart}).
Then, if we choose local sections $x_0, y_0$   of $\Oh_Y$, and  $x_1, y_1$   of $\Oh_Y (L)$, that generate the corresponding sheaves,
we obtain the following local  basis  for $S^{n-1} \left( \Oh_Y \op \Oh_Y(L) \right)^{\ot 2}$:
$$
E_{ij}:= x_0^i x_1^{n-1-i} \ot y_0^j y_1^{n-1-j}  \, , \qquad  0\leq i,j \leq n-1 \, .
$$ 
Similarly, $G_{km}:= (x_0^k x_1^{n-2-k} \ot y_0^m y_1^{n-2-m})\ot (x_1\ot y_1)$, for $0\leq k,m \leq n-2$,
is a local basis for $S^{n-2} \left( \Oh_Y \op \Oh_Y(L) \right)^{\ot 2} \ot \Oh_Y (2L)$.
The morphism $\mu$ in these basis is given as follows:
$$
\mu (E_{ij}) = G_{ij} -f G_{i-1, j-1} \, ,
$$
where $F= fx_1 \ot y_1$, and $G_{km}:=0$, for $k,m \not\in \{ 0, \ldots , n-2 \}$.
Hence from elementary linear algebra, we have that $\ker (\mu)$ (twisted by $\Oh_Y(nL + A_\infty)$) is generated by
$E_{n-1,j}$ and $E_{i,n-1}$, for $0\leq i,j \leq n-1$. So
\begin{eqnarray*}
\ker (\mu) &=& \left( [ \op_{j=0}^{n-1} \Oh_Y((n-1-j)L) ] \op [ \op_{i=0}^{n-2} \Oh_Y((n-1-i)L) ] \right) \ot \Oh_Y(-nL-A_\infty) \, ,
\end{eqnarray*}
and hence \eqref{p*oxas} follows.
}

\section{Deformations of simple dihedral covers}

Let $X$ be a simple dihedral covering of $Y$: this means that $X$ is the subvariety of the vector bundle $V = \LL \oplus \LL$
which is (see \ref{sde}) the complete intersection of two hypersurfaces, one in $|p^*(2L)|$, the other in $|p^*(nL)|$;
here $ p \colon V \ra Y$ is the natural projection.

Observe now that the cotangent sheaf of $V$ is an extension of  $p^*  \Om^1_Y $ by
$p^*(\hol_Y (-L)^{\op 2})$,
\begin{equation}\label{cotangentsequence}
0 \to p^*  \Om^1_Y \to \Om_V^1 \to p^*(\hol_Y (-L)^{\op 2}) \to 0 \, ,
\end{equation}
where  $p^*  \Om^1_Y \to \Om_V^1$ is the cotangent map of $p$. 

Then the conormal sheaf exact sequence of $X$ reads out, if we denote by $L' := \pi^* (L)$,  as
$$ 
0 \ra N^*_{X|V} =  \hol_X (-2L' ) \oplus  \hol_X (-nL' ) \ra \Om^1_{V} \ot \Oh_{X} \ra \Om^1_X \ra 0,
$$
and the dual sequence is 
$$ 
0 \ra \Theta_X \ra \Theta_V\ot \Oh_{X} \ra N_{X|V} =  \hol_X (2L' ) \oplus  \hol_X (nL' ) \ra 0,$$ 
whose direct image under $\pi_*$ yields the tangent exact sequence:
$$ 
0 \ra \pi_* \Theta_X \ra \pi_* (\Theta_V\ot \Oh_{X})  \ra  (\hol_Y (2L ) \oplus  \hol_Y (nL))   \otimes \pi_* (\hol_X) \ra 0.$$
Passing to the long exact cohomology sequence, we get the Kodaira -Spencer exact sequence

\begin{equation}\label{KodSpencer}
H^0 ((\hol_Y (2L ) \oplus  \hol_Y (nL))   \otimes \pi_* (\hol_X))  \ra  H^1 (  \Theta_X ) \ra H^1 (\pi_*(\Theta_V\ot \Oh_{X} )) \ra . 
\end{equation}

The meaning of the first linear map is given through the following definition.

\begin{defin}
The space of {\bf natural deformations} of a simple dihedral covering is the family of complete intersections of $V$:
\begin{eqnarray}\label{natdef}
\begin{cases}
uv  - F & = 0 \\
u^n -2a + v^n + \sum_1^{n-1} (b_i u^i + c_i v ^i) + d (u^n - v^n)& = 0 \, ,
\end{cases}\\
{\rm where} \   b_i , c_i \in H^0(\hol_Y(n-i)L), d \in H^0(\hol_Y).
\end{eqnarray}

\end{defin}

\begin{rem}
The reader can see that in  the second equation $$ -2a + v^n + \sum_1^{n-1} (b_i u^i + c_i v ^i) + d (u^n - v^n)$$ 
can be any section in $H^0 ( \hol_Y (nL)   \otimes \pi_* (\hol_X))$, in view of our basic formulae; instead, any section in  
$H^0 ( \hol_Y (2L)   \otimes \pi_* (\hol_X))$ is of the form 
$$  - F + \b u + \a v + \la u^2 + \mu u^2 , \ {\rm where} \  \a, \b \in H^0(\hol_Y(L)), \la, \mu  \in \CC.$$
But the new equation $ uv   - F + \b u + \a v + \la v^2 + \mu u^2 $
is the old form $ u' v' = F'$ (up to a multiplicative constant) if we choose new variables
$$ u' : =  u + \a  + \la  v, \ v'  := v + \b  + \mu  u.$$
\end{rem}

Of course, one can deform not only the equations, but also simultaneously  the base $Y$, the vector bundle $V$, and the equations;
this however leads in general to a deformation with non smooth base. 

We have at any rate an easy result which says that all small deformations are obtained by natural deformations.

\begin{theo}\label{smalldef}
Assume that $\pi \colon X \ra Y$ is a simple dihedral covering. Then all 
small deformations of $X$ are natural deformations of $\pi \colon X \ra Y$,
provided $H^1 (\pi_*(\Theta_V\ot \Oh_{X} ))=0$
(that happens, for example if $ H^1 (( \Theta_Y \oplus \hol_Y (L)^{\op 2}) \otimes \pi_* (\hol_X)  )= 0$). 
In particular, the Kuranishi family of $X$ is smooth, and the Kuranishi space $\operatorname{Def} (X)$  is locally analytically  isomorphic to  
$$  
\operatorname{Def}' : = \coker  \left(  H^0 (\pi_*(\Theta_V\ot \Oh_{X} ) ) \ra  H^0 ((\hol_Y (2L ) \oplus  \hol_Y (nL))\otimes \pi_* (\hol_X) ) \right) \, .
$$
\end{theo}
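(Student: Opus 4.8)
The plan is to show that the family \eqref{natdef} of natural deformations is complete (versal) at $X$, and then to identify its reduced parameter space with the smooth germ $H^1(\Theta_X)$. First I would use that $\pi$ is finite, hence affine, so that $H^j(X,\sF)=H^j(Y,\pi_*\sF)$ for every coherent $\sF$ and every $j$; consequently the Kodaira--Spencer sequence \eqref{KodSpencer} is nothing but the long exact cohomology sequence on $Y$ of the tangent exact sequence $0 \ra \pi_*\Theta_X \ra \pi_*(\Theta_V\ot\Oh_X) \ra \sN \ra 0$, with $\sN := (\hol_Y(2L)\op\hol_Y(nL))\ot\pi_*\hol_X$. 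Under the hypothesis $H^1(\pi_*(\Theta_V\ot\Oh_X))=0$ the connecting map $\delta\colon H^0(\sN)\ra H^1(\Theta_X)$ is then surjective with kernel the image of $H^0(\pi_*(\Theta_V\ot\Oh_X))$, so $\delta$ induces an isomorphism $\operatorname{Def}'\xrightarrow{\sim}H^1(\Theta_X)$. The parenthetical sufficient condition I would deduce by pushing the dual of \eqref{cotangentsequence} forward to $Y$, exhibiting $\pi_*(\Theta_V\ot\Oh_X)$ as an extension of $\Theta_Y\ot\pi_*\hol_X$ by $\hol_Y(L)^{\op 2}\ot\pi_*\hol_X$, whose two $H^1$'s then vanish.

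Next I would build the family by letting the two equations in \eqref{sde} move inside the appropriate (finite-dimensional, after restriction to $X$) linear systems on the total space $V$: this tautological relative complete intersection is automatically flat over the parameter space, and I would check that its Kodaira--Spencer map at $X$ factors as ``take the class of the infinitesimal change of the defining equations in $H^0(\sN)$'' followed by $\delta$. Then, invoking the Remark following the definition of natural deformations, the deformations of the first equation --- together with the part of the second absorbed by the image of $H^0(\pi_*(\Theta_V\ot\Oh_X))$ --- are precisely those realized by infinitesimal automorphisms of $V$ restricted to $X$, hence exactly the ones killed by $\delta$. So after this reparametrization the family becomes a flat family over a smooth germ $B$ (a neighbourhood of $0$ in the vector space $\operatorname{Def}'$) whose Kodaira--Spencer map is the isomorphism $\delta\colon\operatorname{Def}'\xrightarrow{\sim}H^1(\Theta_X)$; this family is exactly \eqref{natdef}.

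To conclude I would apply Kuranishi theory (legitimate since $X$ is compact, $Y$ being projective and $\pi$ finite): the semiuniversal family lives over a germ $(\operatorname{Def}(X),0)\subset(H^1(\Theta_X),0)$. Since the Kodaira--Spencer map of our family is surjective, the completeness theorem gives that every sufficiently small deformation of $X$ is induced from it --- this is the first assertion of the theorem --- and provides a classifying map $\phi\colon(B,0)\ra(\operatorname{Def}(X),0)$ inducing \eqref{natdef} from the Kuranishi family. Comparing Kodaira--Spencer maps shows that $\phi$, viewed with values in $H^1(\Theta_X)$, has differential the identity; by the inverse function theorem $\phi$ is then a biholomorphism of germs onto a neighbourhood of $0$ in $H^1(\Theta_X)$. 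As its image lies in $\operatorname{Def}(X)\subseteq H^1(\Theta_X)$, this forces $\operatorname{Def}(X)=H^1(\Theta_X)$ near $0$, so the Kuranishi space is smooth and $\phi^{-1}$ furnishes the asserted local analytic isomorphism $\operatorname{Def}(X)\cong\operatorname{Def}'$.

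I expect the main obstacle to be the second step: verifying carefully that the Kodaira--Spencer map of the ``move the equations'' family coincides with the connecting homomorphism $\delta$, and --- more delicately --- matching the redundancy in the parametrization (the first equation being determined only up to the coordinate changes of the Remark) with the image of $H^0(\pi_*(\Theta_V\ot\Oh_X))$, so that the effective base is genuinely $\operatorname{Def}'$ and not merely a space surjecting onto it. Everything else is formal: flatness of a relative complete intersection, the cohomology-of-a-finite-morphism identifications, and the standard completeness plus inverse-function-theorem argument.
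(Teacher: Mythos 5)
Your proposal is correct and follows essentially the same route as the paper's (much terser) proof: the Kodaira--Spencer sequence \eqref{KodSpencer} plus the vanishing hypothesis gives $\operatorname{Def}'\cong H^1(\Theta_X)$, the Remark after the definition of natural deformations gives surjectivity of the Kodaira--Spencer map of the natural family, and completeness together with the implicit/inverse function theorem identifies $\operatorname{Def}(X)$ with the germ of $H^1(\Theta_X)$ at the origin. The extra care you devote to matching the parametrization redundancy with $\ker\delta$ is not strictly needed (surjectivity of the classifying map's differential from the full, smooth parameter space already suffices), but it is harmless and consistent with the paper's appeal to that Remark.
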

\proof
By our assumption, and the Kodaira-Spencer exact sequence, we have an isomorphism of vector spaces $\operatorname{Def}' \cong H^1 (\Theta_X)$.
Moreover, by the previous remark, the family of natural deformations has Kodaira-Spencer map which is surjective onto
$\operatorname{Def}$; therefore, by the implicit functions theorem, $\operatorname{Def}(X)$ is the germ of the analytic space $ H^1 (\Theta_X)$ at the origin,
hence our claim.

\qed
\section{Examples and applications}

\subsection{Simple dihedral covers of projective spaces}

We first consider  the case where $Y=\PP^2$, the complex projective plane,
$n=3$ and $\Oh_Y(L) = \Oh_{\PP^2} (1)$. Then 
$a\in H^0(\PP^2, \Oh_{\PP^2}(3))$ and $F\in H^0 (\PP^2, \Oh_{\PP^2}(2))$
are a cubic and a quadric curve respectively, such that the sextic curve  $\{  F^3 - a^2 = 0\}$ is smooth 
in the locus $F\not= 0$, and $\{ a= 0 \} $ intersects $\{ F=0 \}$ transversely. 
By Theorem \ref{sdn}, we have a smooth $D_3$-cover $\pi \colon X \to \PP^2$
branched over $\sB=\{  F^3 - a^2 = 0\}$. Notice  that $\sB\in |-2K_{\PP^2}|$
and $\om_{\PP^2} = \Oh_Y(-3L)$. Hence
$$
\om_X = \pi^* (\om_{\PP^2} (3L) )  \cong \Oh_X \, .
$$
Furthermore,  $q ( X ) = 0$, hence $X$ is a K3 surface.

Let $W= X/ \langle \tau \rangle$ and let $f\colon W \to \PP^2$ be the induced 
triple cover (see Section \ref{d3&triple}). Then $W$ can be realised as a 
cubic surface in $\PP^3$ in such a way that $f$ is the projection from a point 
in $\PP^3 \setminus W$. Indeed, consider the equations \eqref{sde}
which define $X$. If we define $w:= u+ v$, then
\begin{eqnarray*}
w^3 &=& u^3 + v^3 +3uv(u+v) \\
&=& 2a + 3Fw \, .
\end{eqnarray*}
The branch divisor of $f$ is $\sB = \{F^3 - a^2\}$. So, under the hypotheses of
Theorem \ref{sdn}, $\sB$ is a sextic with $6$ cusps lying on a conic.

The fundamental group of the complement $\PP^2 \setminus \sB$
of a sextic curve $\sB$ as above has been studied in \cite{Zar29}, where in particular
it is proven that $\pi_1 (\PP^2 \setminus \sB)$ is generated by two elements
of order $2$ and $3$ respectively. From this it follows that there exists 
a surjective group homomorphism $\pi_1 (\PP^2 \setminus \sB) \to D_3$,
and hence it follows from the generalised Riemann existence theorem of Grauert and Remmert,
that there exists a $D_3$-cover $\pi \colon X \to \PP^2$.
From Theorem \ref{sdn} we have an explicit construction of such a cover.

Let now $n>3$ and consider the simple $D_n$-cover $\pi \colon X \to \PP^2$
associated to $\Oh_Y(L) = \Oh_{\PP^2} (1)$, 
$a\in H^0(\PP^2, \Oh_{\PP^2}(n))$ and $F\in H^0 (\PP^2, \Oh_{\PP^2}(2))$.
Under the hypotheses of Theorem \ref{sdn}, $X$ is a smooth surface with
\begin{eqnarray*}
\om_X &=& \pi^* \Oh_{\PP^2}(n-3) \, , \\
K_X^2 &=& 2n (n-3)^2 \, , \\
\chi (\Oh_X) &=& \frac{1}{3}n^3 - \frac{3}{2}n^2 + \frac{13}{6}n \, ,
\end{eqnarray*}
in particular $X$ is a surface of general type, which is minimal since $K_X \cdot C >0$
for any curve $C\subset X$.

Finally, let $n=2$, $Y=\PP^2$ and $\Oh_Y(L) = \Oh_{\PP^2} (1)$.
Under the hypotheses of Theorem \ref{sdn}, $X$ is a smooth surface with the following invariants:
\begin{eqnarray*}
\om_X &=& \pi^* \Oh_{\PP^2}(-1) \\
\chi (\Oh_X) &=& 1 \\
q&=&0\\
K_X^2 &=& 4 \\
p_n &=& \dim H^0 (X, \om_X^{\ot n}) =0 \, .
\end{eqnarray*} 
Hence $X$ is a rational, non-minimal surface. Indeed, $X$ is
isomorphic to a del Pezzo surface
of degree $4$ in $\PP^4$, the complete intersection of the quadrics
$uv=F(x_0:x_1:x_2)$, $u^2 + v^2 = 2a(x_0:x_1:x_2)$, where 
$(x_0:x_1:x_2:u:v)$ are now homogeneous coordinates in $\PP^4$.

In a similar way one can construct examples of dihedral covers of $\PP^2$
with $\Oh_Y(L)=\Oh_{\PP^2}(d)$, $d> 1$.

As an application of Theorem \ref{smalldef}, we have that all the small
deformations of a simple $D_n$-cover
$\pi \colon X \to \PP^2$ associated to $\Oh_Y (L) = \Oh (m)$, $m\geq 1$,
are natural deformations, if $(m,n)= (1,2)$, or $m\geq 2$ and any $n\geq 2$. 
This follows directly from Thm. \ref{smalldef}, formula \eqref{pi*ox} and 
the computation of the cohomology of $\Om_{\PP^d}^q (k)$ (\cite[p. 256]{bott57}).

\begin{rem}
Notice that, in general, a simple $D_n$-cover of $\PP^d$,
$\pi \colon X \to \PP^d$,
associated to $\Oh_Y (L) = \Oh (m)$,
$F\in H^0(\PP^d , \Oh (2m))$ and $a\in H^0(\PP^d , \Oh (nm))$,
is isomorphic to a complete intersection  $X'$ in the weighted projective
space $\PP^{d+2}(1, \ldots , 1, m,m)$, where
$$
X' := \{ (x_0 : \ldots : x_d : u : v) \in \PP^{d+2}(1, \ldots ,1, m,m) | uv=F , u^n + v^n =2a \}.
$$
To see this, observe that $X \subset \LL \op \LL$ is the  quotient
of $\tilde{X} \subset (\CC^{d+1} \setminus \{0\})\times \CC^2$
via the linear diagonal action of $\CC^*$  with weights $(1,\ldots, 1, m,m)$, where 
$$
\tilde{X}:=\{ (x_0, \ldots , x_d, u, v) | uv = F , u^n +v^n = 2a \} \, .
$$
The claim now follows since $(\CC^{d+1} \setminus \{0\})\times \CC^2 \subset 
\CC^{d+3} \setminus \{0\}$, and the action of $\CC^*$ on 
$(\CC^{d+1} \setminus \{0\})\times \CC^2$ is the restriction of the linear diagonal 
action on $\CC^{d+3} \setminus \{0\}$ with weights $(1,\ldots, 1, m,m)$.

If $d\geq 3$, then  Theorem \ref{smalldef} implies that all the small deformations of $X$
are natural deformations of the simple $D_n$-cover $\pi \colon X \to \PP^d$.
Indeed, in this case, we have that 
\begin{equation*}
H^1 \left( (\Theta_{\PP^d} \op \Oh_{\PP^d} (m)^{\op 2}) \ot \pi_* \Oh_X \right) = 0 \, ,
\end{equation*}
as it follows from the fact that $H^1 (\PP^d , \Oh_{\PP^d} (k)) =0$, $\forall \, k$,  
and that $H^1 (\PP^d , \Theta_{\PP^d} (k) ) \cong H^{d-1} (\PP^d ,\Om^1_{\PP^d}(-k-d-1))^\vee =0$, $\forall \, k$
(\cite{bott57}).
\end{rem}

\subsection{An application to fundamental groups}
According to the generalized Riemann existence theorem of Grauert and Remmert \cite{GR58},
coverings $\pi \colon X \to Y$ of a normal variety $Y$, of degree $n$, with branch  locus contained in a divisor $\sB \subset Y$,
and with $X$ normal, 
correspond to conjugacy classes of group homomorphisms $\mu \colon \pi_1 (Y\setminus \sB) \to \mathfrak{S}_n$.
In this situation, $X$ is irreducible, if and only if $\operatorname{Im} (\mu)$ is a transitive subgroup of $\mathfrak{S}_n$;
$\pi \colon X \to Y$ is Galois with group $G:= \operatorname{Im} (\mu)$, if and only if $G$ coincides with the group
of automorphisms  of  $\pi $.
In particular, if $\pi \colon X \to Y$ is a $G$-cover with $G$ non-abelian and $X$ irreducible,
then $\pi_1 (Y\setminus \sB)$ is necessarily non-abelian. 

Fundamental groups of complements of divisors in projective varieties have been 
extensively studied by many authors. 
As a direct consequence of  Theorem \ref{sdn}, we have the following 
result.

\begin{prop}
Let $Y$ be a smooth variety and $L \subset Y$ be a divisor. Assume that there exist $a\in H^0 (Y, \Oh_Y ( nL))$ and
$F\in H^0 (Y, \Oh_Y (2L))$, such that the conditions (i), (ii) of Thm. \ref{sdn} are satisfied 
and $\{ a=0 \} \cap \{ F=0 \} \not= \emptyset$. 
Then $\pi_1(Y \setminus \sB)$ admits an epimorphism onto $D_n$, in particular it is non-abelian, where 
$\sB = \{ a^2 - F^n =0 \}$.  
\end{prop}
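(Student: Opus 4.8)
The plan is to read the statement off Theorem \ref{sdn} combined with the generalized Riemann existence theorem of Grauert and Remmert \cite{GR58}. First I would translate the data: let $\LL \to Y$ be the geometric line bundle whose sheaf of sections is $\Oh_Y(L)$, so that $a \in H^0(Y,\LL^{\ot n})$ and $F \in H^0(Y,\LL^{\ot 2})$. By hypothesis $a$ and $F$ satisfy conditions (i) and (ii) of Theorem \ref{sdn}, so that theorem produces a smooth variety $X$ and a $D_n$-cover $\pi \colon X \to Y$ with branch divisor $\sB_\pi = \{ a^2 - F^n = 0 \} = \sB$; since moreover $\{ a=0\}\cap\{F=0\}\neq\emptyset$, the last assertion of Theorem \ref{sdn} guarantees that $X$ is irreducible. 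In particular $X$ is a normal variety, $\pi$ is finite of degree $2n = |D_n|$, and its branch locus is contained in the divisor $\sB$.

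Next I would invoke the correspondence recalled at the beginning of this subsection: normal degree-$2n$ coverings of $Y$ branched in $\sB$ correspond to conjugacy classes of homomorphisms $\mu \colon \pi_1(Y\setminus\sB) \to \mathfrak{S}_{2n}$, with $X$ irreducible precisely when $\operatorname{Im}(\mu)$ is transitive, and with $\pi$ Galois with group $G$ precisely when $\operatorname{Im}(\mu)$ coincides with the group of deck transformations of $\pi$. Applying this to our $\pi$, which is Galois with group $D_n$: the monodromy acts on the fibre $\pi^{-1}(y_0)$ over a base point $y_0 \in Y\setminus\sB$, a $D_n$-torsor, so after a choice of point in this fibre the action identifies $\operatorname{Im}(\mu)$ with $D_n$ realized in $\mathfrak{S}_{2n}$ by the regular representation. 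This identification turns $\mu$ into an epimorphism $\pi_1(Y\setminus\sB) \onto D_n$. Since $D_n$ is non-abelian for $n \geq 3$, $\pi_1(Y\setminus\sB)$ is non-abelian as well.

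I do not expect a genuine obstacle: all the substance is already contained in Theorem \ref{sdn} (existence and, crucially, irreducibility of the cover) and in the Grauert--Remmert theorem, both of which may be used as black boxes. The only point requiring a word of care is the elementary fact that a connected Galois cover realizes its Galois group as a transitive (indeed regular) permutation group on its sheets, so that the associated monodromy, viewed as a map into $D_n \subset \mathfrak{S}_{2n}$, is surjective; this is standard covering-space theory once the irreducibility of $X$ is known.
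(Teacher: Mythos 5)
Your proof is correct and follows essentially the same route as the paper: Theorem \ref{sdn} supplies an irreducible $D_n$-cover branched exactly in $\sB$, and the Grauert--Remmert correspondence recalled at the start of that subsection then forces the monodromy $\pi_1(Y\setminus\sB)\to D_n$ to be surjective, since for a connected Galois cover the deck group acts regularly on a fibre. The only caveat, implicit in the paper's statement as well, is that the non-abelianness conclusion requires $n\geq 3$, which you correctly note.
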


Notice that similar results have been obtained using different methods (\cite[Lemma 3]{CKO03},
\cite{ArCo10}). Briefly, one considers the pencil $\{ \la 2 ( a  =0) + \mu n( F=0 ) \}_{(\la :\mu) \in \PP^1}
\subset |2nL|$ and the induced  morphism $Y\setminus \sB \to \PP^1 \setminus \{ (1:1) \}$.
This gives a group homomorphism $\pi_1 (Y\setminus \sB) \to \pi_1^{orb}(\PP^1 \setminus \{ (1:1) \})$,
where $\pi_1^{orb}$ is the orbifold fundamental group of $\PP^1 \setminus \{ (1:1) \}$
with two orbifold points, $(1:0)$  of order $2$ and $(0:1)$ of order $n$. Now, 
using the long exact homotopy sequence, one concludes that $\pi_1 (Y\setminus \sB) \to \pi_1^{orb}(\PP^1 \setminus \{ (1:1) \})$
is surjective and so $\pi_1 (Y\setminus \sB)$ is not abelian.

\medskip
\noindent {\bf Authors' Address:}\\
\noindent Fabrizio Catanese, 
\\ Lehrstuhl Mathematik VIII,\\ 
Mathematisches
Institut der Universit\"at
Bayreuth\\ NW II,  Universit\"atsstr. 30\\ 95447 Bayreuth (Germany).\\

\noindent Fabio Perroni, \\
Dipartimento di Matematica e Geoscienze, \\
Sezione di Matematica e Informatica,
 Universit\`a di Trieste, Via Valerio 12/1, 34127 Trieste (Italy).\\
 
\noindent          Emails:        Fabrizio.Catanese@uni-bayreuth.de;
 fperroni@units.it.

\end{document}